\documentclass[a4paper,11pt]{article}
\usepackage{preamble}
\emergencystretch=2em

\title{A Simple yet Highly Accurate Prediction-Correction Algorithm for Time-Varying Optimization}
\author[1]{Tomoya Kamijima}
\author[1]{Naoki Marumo}
\author[1,2]{Akiko Takeda}
\affil[1]{Department of Mathematical Informatics, The University of Tokyo, Tokyo, Japan.}
\affil[2]{RIKEN Center for Advanced Intelligence Project, Tokyo, Japan.}
\date{}

\begin{document}
\maketitle

\begin{abstract}
    This paper proposes a simple yet highly accurate prediction-correction algorithm, SHARP, for unconstrained time-varying optimization problems.
    Its prediction is based on an extrapolation derived from the Lagrange interpolation of past solutions. 
    Since this extrapolation can be computed without Hessian matrices or even gradients, the computational cost is low.
    To ensure the stability of the prediction, the algorithm includes an acceptance condition that rejects the prediction when the update is excessively large.
    The proposed method achieves a tracking error of $O(h^{p})$, where $h$ is the sampling period, assuming that the $p$th derivative of the target trajectory is bounded and the convergence of the correction step is locally linear.
    We also prove that the method can track a trajectory of stationary points even if the objective function is non-convex.
    Numerical experiments demonstrate the high accuracy of the proposed algorithm.
\end{abstract}

\section{Introduction}

In this paper, we consider the unconstrained time-varying optimization problem
\begin{gather}
    \min_{\bm x\in\R^n}f(\bm x;t),\label{eq:problem}
\end{gather}
where $f(\cdot;t)\colon\R^n\to\R$ is a differentiable function that depends on a continuous-time variable $t\geq0$.
In the real world, there are many situations in which decisions must be made while the objective function is gradually changing due to environmental changes and other factors, making it necessary to solve optimization problems that change with time.
Time-varying optimization problems arise in various applications such as robotics~\citep{koppel2017d4l}, control~\citep{hours2014parametric}, signal processing~\citep{jakubiec2012d}, electronics~\citep{dall2016optimal}, and machine learning~\citep{dixit2019online}.

Time-varying optimization methods aim to track a \textit{target trajectory} $\bm x^\ast\colon\R_{\geq0}\to\R^n$ with high accuracy.
The trajectory $\bm x^\ast(t)$ is assumed to be a smooth function of $t$ and is a stationary point of $f(\cdot;t)$ for each $t$.
The problem~\cref{eq:problem} can be solved by recasting it as a sequence of time-invariant problems
\begin{gather}
    \min_{\bm x\in\R^n}f(\bm x;t_k)\quad\text{for}\quad k=0,1,\ldots,\label{eq:problem_discrete}
\end{gather}
where $t_k\coloneqq kh$ is sampling time and $h>0$ is a sampling period.
\citet{popkov05} proposed to apply Gradient Descent (GD) to the problem~\cref{eq:problem_discrete} for each round $k$.
We call this method Time-Varying Gradient Descent (TVGD) to distinguish it from GD for time-invariant cases.
The sequence $\{\hat{\bm x}_k\}_{k}$ generated by TVGD satisfies an asymptotic error bound, $\limsup_{k\to\infty}\|\hat{\bm x}_{k}-\bm x^\ast(t_k)\|=O(h)$, assuming that $f(\cdot;t)$ is strongly convex.

To improve this bound, \citet{simonetto16} proposed the Gradient Trajectory Tracking (GTT) algorithm, which consists of prediction and correction steps.
The prediction step computes a prediction $\hat{\bm x}_k$ of the next target point $\bm x^\ast(t_k)$ using the inverse Hessian of $f(\cdot;t_{k-1})$, and the correction step corrects the prediction to $\bm x_k$ by applying GD to \cref{eq:problem_discrete}.
GTT achieves an improved error bound, $\limsup_{k\to\infty}\|\hat{\bm x}_k-\bm x^\ast(t_k)\|=O(h^2)$, under the assumption of strong convexity.
Note that $\hat{\bm x}_k$ is the last estimate before the actual function $f(\cdot;t_k)$ reveals at $t=t_k$; we focus on the error with respect to the predicted solution $\hat{\bm x}_k$, not the corrected solution $\bm x_k$ as focused on previous studies (e.g., \citep{simonetto16}).

Since GTT requires the inverse Hessian of $f$ in its prediction step, we can think of several challenges when applying it to real-world problems.
The inverse Hessian is computationally expensive, particularly for high-dimensional problems.
Even for moderate dimensions, longer computation times necessitate a larger sampling period $h$, leading to greater errors.
Furthermore, achieving higher accuracy than $O(h^2)$ requires higher-order derivatives because the prediction step is based on the Taylor expansion of $f$.
In addition, GTT cannot be directly applied to non-strongly convex problems.
In such cases, the inverse Hessian may not exist.
Even if it does exist at every round, the error bound of $O(h^2)$ is not necessarily guaranteed.

\subsection{Our Contributions}
This paper proposes a Simple yet Highly AccuRate Prediction-correction algorithm, named SHARP, for the problem~\cref{eq:problem}.
The key advantages of the proposed algorithm are as follows:
{
\begin{itemize}
    \item Its prediction step is Hessian-free. 
    It relies on extrapolating past solutions and can be computed without Hessian matrices or even gradients.
    \item The algorithm can track $\bm x^\ast$ with high accuracy.
    It guarantees an asymptotic tracking error of 
    \[
        \limsup_{k\to\infty}\|\hat{\bm x}_k-\bm x^\ast(t_k)\|=O(h^{p})
    \]
    under the assumption that the target trajectry has a bounded $p$th derivative and the correction step converges locally linearly.
    \item The algorithm is applicable to non-convex functions.
    It guarantees small tracking errors for Polyak--\L ojasiewicz~(P\L) functions, defined in \cref{def:pl}, and even for general non-convex functions. 
\end{itemize}}
\cref{tab:error_bound} provides a comparison with other algorithms.
To the best of our knowledge, SHARP is the first algorithm that guarantees a tracking error smaller than $O(h^2)$.
In contrast, the best existing algorithms---whether limited to strongly convex functions or designed for general non-convex functions---are not guaranteed to achieve such a small error.

To achieve these advantages, we leverage the Lagrange interpolation of past solutions.
The Lagrange interpolation can be computed from a linear combination of past solutions, and the coefficients are given by binomial coefficients in this setting.
In the tracking error analysis, we derive a recursion of the tracking error by leveraging the properties of the Lagrange interpolation and binomial coefficients.
This recursion-based analysis provides a clear understanding of the tracking error.
Although the analysis does not directly extend to general non-convex functions, the proposed method still guarantees an upper bound on the error in such cases.
This is due to an acceptance condition in the algorithm that rejects the prediction when the update is excessively large.

\begin{table}[tb]
    \centering
    \caption{Comparison of time-varying optimization algorithms. 
    A constant $\kappa$ corresponds to the condition number defined in \cref{sec:specific}. 
    LCP stands for Linear Complementarity Problem.
    The errors are the asymptotic bounds on $\|\hat{\bm x}_k-\bm x^\ast(t_k)\|$ other than the following exceptions: ${}^{\ast1}$ $f(\hat{\bm x}_k;t_k)-\min_{\bm x}f(\bm x;t_k)$, 
    ${}^{\ast2}$ $\frac1K\sum_{k=1}^K\|\nabla_{\bm x}f(\hat{\bm x}_k;t_k)\|$.}
    \begin{tabular}{@{\hskip 2pt}c@{\hskip 2pt}|@{\hskip 2pt}c@{\hskip 2pt}c@{\hskip 2pt}c@{\hskip 2pt}c@{\hskip 2pt}}
        \toprule
        Assumption & Algorithm & Constraint & Oracles & Error \\
        \midrule
        Strongly convex & TVGD~\citep{popkov05} & & $\nabla_{\bm x}f$ & $O(h)$ \\
        & SPC~\citep{lin2019simplified} & & $\nabla_{\bm x}f$ & $O(h^2)$ \\
        & GTT~\citep{simonetto16} & & $\nabla_{\bm x}f$, $(\nabla_{\bm x\bm x}f)^{-1}$ & $O(h^2)$ \\
        & \citep{qi2019new} & & $\nabla_{\bm x}f$, $(\nabla_{\bm x\bm x}f)^{-1}$ & - \\
        & \citep{simonetto17} & $\mathcal X$ & $\nabla_{\bm x}f$, $\nabla_{\bm x\bm x}f$, $\mathrm{proj}_{\mathcal X}$ & $O(h)$ \\
        & \citep{bastianello19} & $\mathcal X$ & $\nabla_{\bm x}f$, $\nabla_{\bm x\bm x}f$, $\mathrm{proj}_{\mathcal X}$ & $O(h)$ \\
        & \citep{simonetto18} & $\bm A\bm x=\bm b$ & Convex opt. & $O(h)$ \\
        & \citep{bastianello23} & $\bm A\bm x=\bm b$ & Convex opt. & $O(h)$ \\
        & \textbf{SHARP} (Thm.~\ref{thm:sc}) & & $\nabla_{\bm x}f$ & $O(h^{p})$ \\
        \midrule
        Strongly convex & \citep{zavala2010real} & $\bm g(\bm x)\leq\bm0$ & LCP & $O(h^2)$ \\ 
        (local) & \citep{dontchev2013euler} & $\mathcal X$ & {\small$\nabla_{\bm x}f$, $(\nabla_{\bm x\bm x}f)^{-1}$, $\mathrm{proj}_{\mathcal X}$} & $O(h^2)$ \\ 
        & \textbf{SHARP} (Thm.~\ref{cor:pl_local}) & & $\nabla_{\bm x}f$ & $O(h^{p})$ \\
        \midrule
        P\L & TVGD~\citep{iwakiri2024prediction} & & $\nabla_{\bm x}f$ & $O(h)^{\ast1}$ \\
        & \textbf{SHARP} (Thm.~\ref{thm:error_pl}) & & $\nabla_{\bm x}f$ & $O(h^2)^{\ast1}$ \\
        \midrule
        Non-convex & \citep{massicot2019line} & $\bm g(\bm x)\leq\bm 0$ & $\nabla_{\bm x}f$, $\nabla_{\bm x\bm x}f$, $\nabla_{\bm x} \bm g$ & - \\
        & \citep{ding2021time} & $\bm g(\bm x)=\bm 0$ & $\nabla_{\bm x}f$, $\nabla_{\bm x} \bm g$ & - \\
        & \citep{tang2022running} & $\mathcal X$, $\bm g(\bm x)\leq\bm 0$ & $\nabla_{\bm x}f$, $\nabla_{\bm x} \bm g$, $\proj_{\mathcal X}$ & - \\
        & TVGD~\citep{iwakiri2024prediction} & & $\nabla_{\bm x}f$ & $O(\sqrt h)^{\ast2}$ \\
        & \textbf{SHARP} (Thm.~\ref{thm:error_nc}) & & $\nabla_{\bm x}f$ & $O(\sqrt h)^{\ast2}$ \\
        \bottomrule
    \end{tabular}
    \label{tab:error_bound}
\end{table}

\subsection{Related Work}
\paragraph{Prediction-correction algorithms for strongly convex functions}
Since the introduction of the prediction-correction framework by \citet{simonetto16}, various algorithms have been developed for a wide range of strongly convex time-varying optimization. 
\citet{simonetto17} proposed an algorithm for convex constrained problems by using the projection operator $\proj_{\mathcal X}$ onto the feasible region $\mathcal X$.
For linearly constrained problems, \citet{simonetto18} proposed a dual-ascent-type algorithm that solves a convex optimization problem at each round.
\citet{bastianello19} proposed a splitting-type algorithm for non-smooth problems.
While the asymptotic tracking error of $O(h^2)$ for unconstrained problems has been proved by \citet{simonetto16}, the tracking error of the aforementioned three algorithms in \citep{bastianello19,simonetto18,simonetto17} is $O(h)$.
This paper aims at obtaining tracking error bounds better than $O(h^2)$ for unconstrained problems.

\paragraph{Non-convex time-varying optimization}
Non-convex problems have also been investigated, but most studies assume local strong convexity and focus on the behavior in the neighborhood of the target trajectory. 
\citet{zavala2010real} proposed an algorithm for constrained problems that solves a linear complementarity problem (LCP) at each round.
\citet{dontchev2013euler} proposed the Euler--Newton method for parametric variational inequalities.
Non-convex problems have also been analyzed from the perspective of their continuous-time limit~\citep{ding2021time,massicot2019line,tang2022running}.
For non-convex problems without local strong convexity, \citet{iwakiri2024prediction} derived a tracking error bound of TVGD and proposed a prediction-correction algorithm.
Unlike previous studies, which focus on either local or global guarantees, our analysis provides both: the proposed method achieves a significantly smaller error in local regions while ensuring that the global error remains controlled even for non-convex functions.

\paragraph{Extrapolation-based prediction-correction algorithms}
A similar idea of our extrapolation has been employed in strongly convex time-varying optimization problems.
\citet{lin2019simplified} proposed a prediction-correction method, the Simplified Prediction-Correction algorithm (SPC), which corresponds to our algorithm under specific parameter settings (i.e., $(P,v)=(2,\infty)$).
The same extrapolation as ours has been investigated in \citep{zhang2019presentation} from the viewpoint of the Taylor expansion and Vandermonde's matrix.
Based on their extrapolation, \citet{qi2019new} proposed a prediction-correction method, but their method requires the inverse Hessian and they do not provide a tracking error analysis.
\citet{bastianello23} proposed a method that extrapolates the objective function assuming the Hessian is time-invariant.
Although our method uses a similar extrapolation to the above methods, Hessian matrices are not required.
Moreover, the method guarantees a tracking error smaller than $O(h^2)$ for the first time.

\subsection{Notation}
Let $\R$ and $\R_{\geq0}$ denote the set of real and nonnegative numbers, respectively.
We denote the first, second, and $n$th derivative of a function $\bm \varphi(t)$ by $\dot{\bm \varphi}(t)$, $\ddot{\bm \varphi}(t)$, and $\bm \varphi^{(n)}(t)$, respectively. 
For the partial derivatives of $f(\bm x;t)$ with regard to $\bm x$ and $t$, we use $\nabla_{\bm x}$ and $\nabla_{t}$, respectively, and $\nabla_{\bm x\bm x}$, $\nabla_{\bm x t}$, $\nabla_{tt}$ and so on for higher-order derivatives.
We denote the binomial coefficient by $\binom{n}{k}\coloneqq n!/(k!(n-k)!)$ for $k\in\{0,\ldots,n\}$, and $\binom{n}{k}\coloneqq0$ for $k\notin\{0,\ldots,n\}$.
The norm $\|\cdot\|$ denotes the Euclidean norm for vectors and the induced operator norm for matrices and tensors.
The distance between a vector $\bm x\in\R^n$ and a set $\mathcal X\subset\R^n$ is defined by $\dist(\bm x,\mathcal X)\coloneqq\inf_{\bm y\in \mathcal X}\|\bm x-\bm y\|$.

\section{Proposed Method}\label{sec:proposed}

This section introduces a Simple yet Highly AccuRate Prediction-correction algorithm (SHARP) to solve the time-varying optimization problem~\cref{eq:problem}.
The algorithm is given in \cref{alg:hospca}, which consists of a prediction step (Lines~\ref{line:prediction1}--\ref{line:prediction5}) and a correction step (Line~\ref{line:correction1}).
For each round $k$, we compute $\hat{\bm x}_k$ as a prediction of the next target point $\bm x^\ast(t_{k})$ before $f(\cdot;t_{k})$ is revealed and correct the prediction to $\bm x_{k}$ using the revealed information of $f(\cdot;t_{k})$.

\begin{algorithm}[t]
    \caption{Simple yet Highly AccuRate Prediction-correction algorithm (SHARP)}
    \label{alg:hospca}
    \begin{algorithmic}[1]
        \Require $\bm x_0\in\R^n$, $P\geq1$, $0\leq v\leq\infty$
        \State $\bm x_{-P+1}=\cdots=\bm x_{-1}=\bm x_0$
        \For{$k=1,2,\ldots$}
            \For{$p=P,P-1,\ldots,1$}\label{line:prediction1}
                \Comment{Prediction}
                \State $\hat{\bm x}_k^p=\sum_{i=1}^{p}(-1)^{i-1}\binom {p}{i}\bm x_{k-i}$ \label{line:prediction2}
                \If{$\|\hat{\bm x}_k^p-\bm x_{k-1}\|\leq vh$}\label{line:prediction3}
                    \Comment{Acceptance condition}
                    \State $\hat{\bm x}_k=\hat{\bm x}_k^p$ \label{line:prediction4}
                    \State{\Break}\label{line:prediction5}
                \EndIf
            \EndFor
            \State Set $\bm x_{k}$ to be an approximate solution to $\min_{\bm x} f(\bm x;t_{k})$ by e.g., \cref{eq:correction} \label{line:correction1}\\
            \Comment{Correction}
        \EndFor
    \end{algorithmic}
\end{algorithm}

The prediction step computes 
\begin{gather}
    \hat{\bm x}_k^p=\sum_{i=1}^{p}(-1)^{i-1}\binom {p}{i}\bm x_{k-i},\label{eq:prediction}
\end{gather}
for $p=P,P-1,\ldots,1$, where $P\geq1$ is a predetermined maximum order of prediction.
The proposed prediction scheme~\cref{eq:prediction} is reduced to the non-prediction scheme when $p=1$, and to the prediction used in the SPC algorithm~\citep{lin2019simplified} when $p=2$.

The prediction~\cref{eq:prediction} is based on the following idea.
Given the past target points $\bm x^\ast(t_{k-p}),\ldots,$ $\bm x^\ast(t_{k-1})$, 
the next target point $\bm x^\ast(t_{k})$ can be extrapolated by 
the Lagrange interpolation:
\begin{gather}
    \bm x^\ast(t_{k})\approx\sum_{i=1}^p\left(\prod_{\substack{j=1\\j\neq i}}^p\frac{t_k-t_{k-j}}{t_{k-i}-t_{k-j}}\right)\bm x^\ast(t_{k-i})
    =\sum_{i=1}^{p}(-1)^{i-1}\binom {p}{i}\bm x^\ast(t_{k-i}).\label{eq:approximation}
\end{gather}
Although $\bm x^\ast(t_{k-i})$ is unknown in reality, it is reasonable to assume that $\bm x_{k-i}$ is close to the target point $\bm x^\ast(t_{k-i})$ since $\bm x_{k-i}$ is computed by the correction step.
Under this assumption, the prediction~\cref{eq:prediction} is expected to approximate $\bm x^\ast(t_{k})$ well, as shown in \cref{fig:prediction}.

\begin{figure}[t]
    \centering
    \begin{tikzpicture}[scale=1.33]
        \pgfmathsetmacro{\a}{1.8}
        \pgfmathsetmacro{\b}{1.8}
        \pgfmathsetmacro{\c}{1.9}
        \pgfmathsetmacro{\d}{1.8}
        \draw[thick, domain=-3:3, smooth, variable=\x, ptred] plot ({\x}, {sqrt(36-\x*\x)-4});
        \fill[red] (-2, {sqrt(32)-4}) circle (1.5pt) node [below,xshift=5pt] {$\bm x^\ast(t_{k-4})$};
        \fill[red] (-1, {sqrt(35)-4}) circle (1.5pt) node [above] {$\bm x^\ast(t_{k-3})$};
        \fill[red] (-0, {sqrt(36)-4}) circle (1.5pt) node [above] {$\bm x^\ast(t_{k-2})$};
        \fill[red] (1, {sqrt(35)-4}) circle (1.5pt) node [above] {$\bm x^\ast(t_{k-1})$};
        \fill[red] (2, {sqrt(32)-4}) circle (1.5pt) node [above] {$\bm x^\ast(t_{k})$};
        \draw[domain=-3:1,smooth,variable=\x,blue,thick,dashed] plot ({\x},{-\a/6*(\x+1)*\x*(\x-1) + \b/2*(\x+2)*\x*(\x-1) - \c/2*(\x+2)*(\x+1)*(\x-1)+\d/6*(\x+2)*(\x+1)*\x});
        \draw[domain=1:1.9,smooth,variable=\x,blue,thick,->] plot ({\x},{-\a/6*(\x+1)*\x*(\x-1) + \b/2*(\x+2)*\x*(\x-1) - \c/2*(\x+2)*(\x+1)*(\x-1)+\d/6*(\x+2)*(\x+1)*\x});
        \node[blue] at (1.1, 1.3) {Prediction};
        \fill (-2,{\a}) circle (1.5pt) node [above] {$\bm x_{k-4}$};
        \fill (-1,{\b}) circle (1.5pt) node [below] {$\bm x_{k-3}$};
        \fill (0,{\c}) circle (1.5pt) node [below] {$\bm x_{k-2}$};
        \fill (1,{\d}) circle (1.5pt) node [below] {$\bm x_{k-1}$};
        \fill[blue] (2,{-\a/6*(2+1)*2*(2-1) + \b/2*(2+2)*2*(2-1) - \c/2*(2+2)*(2+1)*(2-1)+\d/6*(2+2)*(2+1)*2}) circle (1.5pt) node [below] {$\hat{\bm x}_{k}^4$};
    \end{tikzpicture}    
    \caption{The prediction $\hat{\bm x}_k^4$ is computed from the Lagrange interpolation of $\bm x_{k-4},\ldots,\bm x_{k-1}$, which approximates $\bm x^\ast(t_k)$ well.}
    \label{fig:prediction}
\end{figure}

An error bound for the Lagrange interpolation of scalar-valued functions can be found in standard textbooks on numerical analysis (e.g., {\citep[Theorem 6.2]{suli2003introduction}}).
The following lemma extends the error bound to the vector-valued function $\bm x^\ast$, which is useful in our analysis.

\begin{lemma}\label{lem:velocity1}
    Let $0\leq\underline k<\overline k\leq\infty$ be constants.
    Suppose that $\bm x^\ast\colon[t_{\underline k},t_{\overline k}]\to\R^n$ is $p$-times differentiable with $1\leq p\leq\overline k-\underline k$.
    Then, the following holds for all $\underline k+p\leq k\leq\overline k$:
    \begin{gather}
        \left\|\bm x^\ast(t_k)-\sum_{i=1}^{p}(-1)^{i-1}\binom {p}{i}\bm x^\ast(t_{k-i})\right\|
        \leq h^{p}\sup_{t\in[t_{\underline k},t_{\overline k}]}\big\|(\bm x^\ast)^{(p)}(t)\big\|.
    \end{gather}
\end{lemma}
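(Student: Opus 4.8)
The plan is to recognize the vector on the left-hand side as a $p$-th order (backward) finite difference of $\bm x^\ast$, to write this difference as an iterated integral of the $p$-th derivative $(\bm x^\ast)^{(p)}$ over a cube of side $h$, and then to bound the integral by the triangle inequality. This route produces the constant $h^{p}\sup\|(\bm x^\ast)^{(p)}\|$ directly, with no dimension dependence.

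First I would introduce, for $0\le m\le p$ and an index $\ell$ with $\ell-m\ge\underline k$ and $\ell\le\overline k$, the finite difference
\[
  \bm D_m(\ell):=\sum_{i=0}^{m}(-1)^{i}\binom{m}{i}\,\bm x^\ast(t_{\ell-i}).
\]
Since $\binom{p}{0}=1$, the quantity to be estimated is exactly $\bm x^\ast(t_k)-\sum_{i=1}^{p}(-1)^{i-1}\binom{p}{i}\bm x^\ast(t_{k-i})=\bm D_p(k)$, so it suffices to bound $\|\bm D_p(k)\|$.

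Next I would establish, by induction on $m$, the representation
\[
  \bm D_m(k)=\int_0^h\!\!\cdots\!\int_0^h(\bm x^\ast)^{(m)}\!\Bigl(t_k-\textstyle\sum_{j=1}^{m}u_j\Bigr)\,\mathrm{d}u_1\cdots\mathrm{d}u_m ,
\]
valid whenever $\bm x^\ast$ is $m$-times differentiable on $[t_{\underline k},t_{\overline k}]$ and $\underline k+m\le k\le\overline k$. The base case $m=1$ is the fundamental theorem of calculus, $\bm D_1(k)=\bm x^\ast(t_k)-\bm x^\ast(t_{k-1})=\int_0^h\dot{\bm x}^\ast(t_k-u_1)\,\mathrm{d}u_1$. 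For the inductive step I would use Pascal's rule $\binom{m}{i}=\binom{m-1}{i}+\binom{m-1}{i-1}$ together with an index shift to get $\bm D_m(k)=\bm D_{m-1}(k)-\bm D_{m-1}(k-1)$; writing $\bm g(s):=\sum_{i=0}^{m-1}(-1)^i\binom{m-1}{i}\bm x^\ast(s-ih)$, which is differentiable with $\dot{\bm g}(s)=\sum_{i=0}^{m-1}(-1)^i\binom{m-1}{i}\dot{\bm x}^\ast(s-ih)$, this reads $\bm D_m(k)=\bm g(t_k)-\bm g(t_{k-1})=\int_0^h\dot{\bm g}(t_k-u_m)\,\mathrm{d}u_m$. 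Applying the induction hypothesis to $\dot{\bm x}^\ast$ (which is $(m-1)$-times differentiable) to rewrite $\dot{\bm g}(t_k-u_m)$ as an $(m-1)$-fold integral of $(\bm x^\ast)^{(m)}$ then closes the induction. Along the way I would check that every evaluation point stays in the domain: for $u_1,\dots,u_m\in[0,h]$ the argument $t_k-\sum_{j}u_j$ lies in $[t_{k-m},t_k]$, and $\underline k+m\le k\le\overline k$ guarantees $[t_{k-m},t_k]\subseteq[t_{\underline k},t_{\overline k}]$.

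Finally, taking norms in the representation for $m=p$ and using $\bigl\|\int\cdots\bigr\|\le\int\|\cdots\|$ yields
\[
  \|\bm D_p(k)\|\le\int_0^h\!\!\cdots\!\int_0^h\Bigl\|(\bm x^\ast)^{(p)}\!\bigl(t_k-\textstyle\sum_j u_j\bigr)\Bigr\|\,\mathrm{d}u_1\cdots\mathrm{d}u_p\le h^{p}\sup_{t\in[t_{\underline k},t_{\overline k}]}\bigl\|(\bm x^\ast)^{(p)}(t)\bigr\| ,
\]
which is the claim. I do not expect a genuine difficulty here; the only things needing care are the binomial bookkeeping in the Pascal-rule step, the domain check above, and---if one wants to be scrupulous about smoothness---noting that the fundamental theorem of calculus applies under the stated differentiability (e.g.\ the relevant first derivatives are continuous, or are bounded so that the functions are Lipschitz and hence absolutely continuous). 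I would also remark that invoking the classical scalar Lagrange interpolation error bound coordinatewise would only give the weaker constant $\bigl(\sum_j\sup_t|(x^\ast_j)^{(p)}(t)|^2\bigr)^{1/2}$ in place of $\sup_t\|(\bm x^\ast)^{(p)}(t)\|$, which is precisely why the integral (Peano-kernel) form of the remainder is the right tool.
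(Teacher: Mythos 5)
Your proposal is correct and is essentially the paper's own argument: the paper likewise proves, by induction using the fundamental theorem of calculus and Pascal's rule, an iterated-integral representation of the $p$th backward difference (stated for $\bm\varphi(s)=\bm x^\ast((k-s)h)$ over the unit cube, which is just a rescaling of your integral over $[0,h]^p$), and then bounds it by the triangle inequality to obtain $h^{p}\sup_t\|(\bm x^\ast)^{(p)}(t)\|$.
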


This lemma can be proved by modifying the proof for scaler-valued functions by replacing Rolle's theorem with the fundamental theorem of calculus.
To make this paper self-contained, we provide the proof in \cref{sec:proof_velocity1}.

Since the prediction~\cref{eq:prediction} is justified under the assumption $\bm x_{k-i}\approx\bm x^\ast(t_{k-i})$, we have to detect the case where this assumption is violated.
Indeed, the prediction~\cref{eq:prediction} can cause the sequence $\{\hat{\bm x}_k\}_k$ to diverge if $\bm x_{k-i}$ is far from $\bm x^\ast(t_{k-i})$.
In addition, the prediction~\cref{eq:prediction} may not work well when there is another stationary point near $\bm x^\ast(t_{k-i})$, as shown in \cref{fig:spc}.

To detect these invalid cases, we propose checking the following acceptance condition:
\begin{gather}
    \|\hat{\bm x}_k^p-\bm x_{k-1}\|\leq vh,\label{eq:acceptance}
\end{gather}
where $v\geq0$ is a predetermined threshold parameter, which will be discussed later in \cref{rem:v}.
Since the tracking error can be smaller for large $p$, we take the largest $p$ satisfying the condition~\cref{eq:acceptance}, as stated on Lines~\ref{line:prediction1}--\ref{line:prediction5} of \cref{alg:hospca}.
This condition comes from the approximation 
\[
    \bm x^\ast(t_{k})-\bm x^\ast(t_{k-1})\approx\dot{\bm x}^\ast(t_{k})h=O(h),
\]
which is valid as long as $\bm x^\ast$ has a bounded derivative.
The condition~\cref{eq:acceptance} prevents the sequence $\{\bm x_k\}_k$ from diverging, particularly when the objective function is non-strongly convex.
If the objective function is strongly convex or satisfies the Polyak--\L ojasiewicz condition, the condition becomes unnecessary (i.e., we can set $v=\infty$, as stated in \cref{thm:sc}).

The correction step approximately minimizes $f(\bm x;t_{k})$ to correct the prediction $\hat{\bm x}_k$ to $\bm x_{k}$.
For example, we can use the Gradient Descent (GD) update
\begin{gather}
    \bm x_{k}^{c+1}=\bm x_{k}^c-\alpha\nabla_{\bm x}f(\bm x_{k}^c;t_{k})\quad (c=0,\ldots,C-1)\label{eq:correction}    
\end{gather}
from the initial point $\bm x_{k}^0=\hat{\bm x}_k$, where $\alpha>0$ is a step-size parameter and $C\geq1$ is a predetermined iteration number of the correction step.

\begin{figure}[t]
    \begin{minipage}[b]{0.49\hsize}
      \centering
      \begin{tikzpicture}
          \draw[thick, domain=-3:3, smooth, variable=\x, ptred] plot ({\x}, {sqrt(36-\x*\x)-4});
          \fill (-2.1, {sqrt(32)-3.9}) circle (2pt);
          \node at (-2.6, {sqrt(32)-3.9}) {$\bm x_{k-2}$};
          \fill (-0.1, 2.1) circle (2pt);
          \node at (-0.56, 2.25) {$\bm x_{k-1}$};
          \fill[red] (-2, {sqrt(32)-4}) circle (2pt);
          \node[red] at (-1.3, 1.5) {$\bm x^\ast(t_{k-2})$};
          \fill[red] (0, 2) circle (2pt);
          \node[red] at (0, 1.7) {$\bm x^\ast(t_{k-1})$};
          \fill[red] (2, {sqrt(32)-4}) circle (2pt);
          \node[red] at (1.3, 1.5) {$\bm x^\ast(t_{k})$};
          \fill [blue] (1.9, 2.443) circle (2pt);
          \node [blue] at (1.9, 2.743) {$\hat{\bm x}_k^2$};
          \draw[->, thick, blue] (-0.1, 2.1) -- (1.7, 2.409);
          \draw[dashed, thick, blue] (-2.1, {sqrt(32)-3.9}) -- (-0.1, 2.1);
        \end{tikzpicture}   
      \subcaption{The prediction $\hat{\bm x}_k^2$ is close to the next target point $\bm x^\ast(t_{k})$.}\label{fig:spc_1}
    \end{minipage}
    \begin{minipage}[b]{0.49\hsize}
      \centering
      \begin{tikzpicture}
        \draw[thick, domain=-3:3, smooth, variable=\x, ptred] plot ({\x}, {sqrt(36-\x*\x)-5});
        \fill (-0.1, 1.1) circle (2pt);
        \node at (-0.56, 1.15) {$\bm x_{k-1}$};
        \fill[red] (-2, {sqrt(32)-5}) circle (2pt);
        \node[red] at (-1.8, 1.1) {$\tilde{\bm x}^\ast(t_{k-2})$};
        \fill[red] (0, 1) circle (2pt);
        \node[red] at (0.3, 0.7) {$\tilde{\bm x}^\ast(t_{k-1})$};
        \fill[red] (2, {sqrt(32)-5}) circle (2pt);
        \node[red] at (1.6, 0.4) {$\tilde{\bm x}^\ast(t_{k})$};
        \fill [blue] (1.9, 2.443) circle (2pt);
        \node [blue] at (2.0, 2.1) {$\hat{\bm x}_k^2$};
        \draw[->, thick, blue] (-0.1, 1.1) -- (1.7, 2.309);
        \draw[dashed, thick, blue] (-2.1, {sqrt(32)-5.9}) -- (-0.1, 1.1);
        \draw[thick, domain=-3:3, smooth, variable=\x, ptred] plot ({\x}, {sqrt(36-\x*\x)-6});
        \fill (-2.1, {sqrt(32)-5.9}) circle (2pt);
        \node at (-2.6, {sqrt(32)-5.9}) {$\bm x_{k-2}$};
        \fill[red] (-2, {sqrt(32)-6}) circle (2pt);
        \node[red] at (-1.3, -0.5) {$\bm x^\ast(t_{k-2})$};
        \fill[red] (0, 0) circle (2pt);
        \node[red] at (0, -0.3) {$\bm x^\ast(t_{k-1})$};
        \fill[red] (2, {sqrt(32)-6}) circle (2pt);
        \node[red] at (1.3, -0.5) {$\bm x^\ast(t_{k})$};
      \end{tikzpicture}   
      \subcaption{The prediction $\hat{\bm x}_k^2$ is far from both $\bm x^\ast(t_{k})$ and $\tilde {\bm x}^\ast(t_{k})$.}\label{fig:spc_2}
    \end{minipage}
    \caption{The prediction $\hat{\bm x}_k^2=2\bm x_{k-1}-\bm x_{k-2}$ does not work well when there is another stationary point $\tilde {\bm x}^\ast(t)$ near $\bm x^\ast(t)$.}\label{fig:spc}
\end{figure}
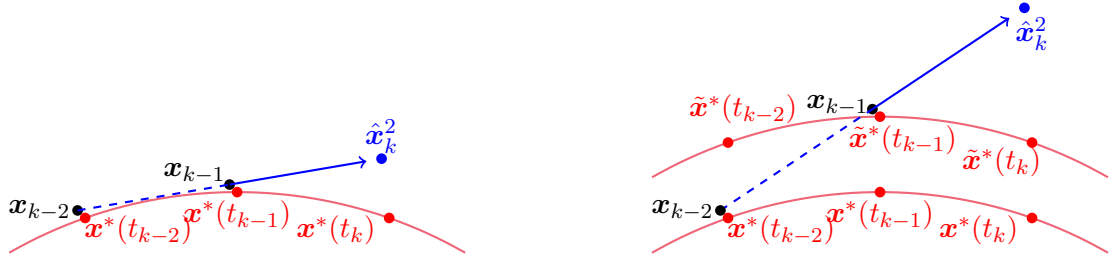
\section{General Tracking Error Analysis}
This section shows that \cref{alg:hospca} tracks a target trajectory with high accuracy under some general assumptions. 
We will confirm that the assumptions are satisfied in some specific settings in \cref{sec:specific} and will discuss cases where the assumptions do not hold in \cref{sec:specific2}.

\subsection{Assumptions}
Assumptions in this section are summarized as follows.
Note that $P$ and $v$ below are input parameters of \cref{alg:hospca}.

\begin{assumption}\label{ass:trajectory_local}
    Let $0\leq\underline k<\overline k\leq\infty$ and $0<r\leq\infty$ be constants.
    There exists $\bm x^\ast\colon[t_{\underline k},t_{\overline k}]\to\R^n$ such that the following holds:
    \begin{enuminasm}
        \item $\nabla_{\bm x}f(\bm x^\ast(t);t)=\bm0$ for all $t_{\underline k}\leq t\leq t_{\overline k}$.\label{ass:trajectory_local1}
        \item $\bm x^\ast$ is differentiable on $[t_{\underline k},t_{\overline k}]$, and the following holds:
        \begin{gather}
            \sigma_1\coloneqq\sup_{t\in[t_{\underline k},t_{\overline k}]}\|\dot{\bm x}^\ast(t)\|<\begin{dcases*}
                \frac rh-v&if $r<\infty$,\\
                \infty&if $r=\infty$.\\
            \end{dcases*}\label{ineq:condition13}
        \end{gather}
        \label{ass:trajectory_local2}
        \item The constant $r$ is $\infty$, or there exists $k_0\in\{\underline k,\ldots,\overline k-P\}$ such that $\|\bm x_{k_0}-\bm x^\ast(t_{k_0})\|\leq r-(v+\sigma_1)h$, where $\bm x_{k_0}$ is the corrected solution computed on Line~\ref{line:correction1} of \cref{alg:hospca}.\label{ass:initial}
        \item If $\|\hat{\bm x}_k-\bm x^\ast(t_k)\|\leq r$ holds at round $k$ of \cref{alg:hospca}, then $\bm x_k$ computed on Line~\ref{line:correction1} satisfies $\|\bm x_{k}-\bm x^\ast(t_{k})\|\leq\gamma \|\hat{\bm x}_k-\bm x^\ast(t_k)\|$, where $\gamma$ is a constant independent of $k$ satisfying \label{ass:correction_local}
        \begin{equation}
            \gamma<\frac1{2^{P}-1}\quad\text{and}\quad\gamma\leq\begin{dcases*}
                1-\frac{v+\sigma_1}{r}h&if $r<\infty$,\\
                1&if $r=\infty$.
            \end{dcases*}\label{ineq:condition2}
        \end{equation}
    \end{enuminasm}
\end{assumption}





\cref{ass:trajectory_local1,ass:trajectory_local2} guarantee the existence of a stationary point $\bm x^\ast(t)$ that evolves smoothly over time.
These assumptions are essential for ensuring a small tracking error, as the trajectory $\bm x^\ast$ may otherwise become discontinuous.
The condition~\cref{ineq:condition13} is not restrictive when $h$ is sufficiently small.

\cref{ass:initial,ass:correction_local} guarantee that the correction step sufficiently reduces the error.
\cref{ass:initial} means that the sequence $\{\bm x_k\}_k$ of corrected solutions approaches sufficiently close to the trajectory $\bm x^\ast(t)$.
In existing local analyses~\citep{dontchev2013euler,zavala2010real}, it is commonly assumed that the initial point is a local optimum at the initial time, in which case \cref{ass:initial} is naturally satisfied.
\cref{ass:correction_local} states that the correction step reduces the distance to the target point by a factor $\gamma$.
The condition~\cref{ineq:condition2} states that $\gamma$ is sufficiently small, and the second inequality in \cref{ineq:condition2} is weaker than the first one when $h$ is small enough.
The constant $\gamma$ depends on the objective function and the algorithm used in the correction step.
As confirmed in \cref{sec:specific}, this assumption holds if the objective function is strongly convex or satisfies the Polyak--\L ojasiewicz condition around $\bm x^\ast(t)$, and the correction step employs GD.
We will also discuss how $\gamma$ can be estimated in specific settings.

For example, when $f(\cdot;t)$ is assumed to be strongly convex on $\R^n$, \cref{ass:trajectory_local} holds with $(\underline k,\overline k,r)=(0,\infty,\infty)$, as stated in \cref{thm:sc}.



In the rest of this section, we restrict our discussion 
to a trajectory $\bm x^\ast$ satisfying \cref{ass:trajectory_local}.
For this $\bm x^\ast$, let 
\begin{align}
    \sigma_i&\coloneqq\begin{dcases*}
        \sup_{t\in[t_{\underline k},t_{\overline k}]}\big\|(\bm x^\ast)^{(i)}(t)\big\|&if $\bm x^\ast$ is $i$-times differentiable on $[t_{\underline k},t_{\overline k}]$,\\
        \infty&otherwise,
    \end{dcases*}\\
    e_k&\coloneqq\|\hat{\bm x}_k-\bm x^\ast(t_k)\|,\label{eq:error_definition}
\end{align}
where $\hat{\bm x}_k$ is the prediction defined on Line~\ref{line:prediction4} of \cref{alg:hospca}.


\subsection{Acceptance of Higher-Order Predictions}

As explained in \cref{sec:proposed}, the acceptance condition \cref{eq:acceptance} rejects predictions with excessively large step sizes.
To ensure that most predictions are not rejected, this subsection provides sufficient conditions for predictions to be accepted.
In particular, we show that the prediction is always accepted after a certain round, which will be formally stated in \cref{prop:acceptance_local}.

First, we derive an upper bound on $e_{k}$ of \cref{eq:error_definition}.

\begin{lemma}\label{lem:error_general}
    Suppose that \cref{ass:trajectory_local} holds.
    Then, $e_k\leq r$ for all $k_0+1\leq k\leq \overline k$, and 
    \[
        e_{k}\leq\frac{v+\sigma_1}{1-\gamma}h+e_{k_0}\gamma^{k-k_0}
    \]
    for all $k_0\leq k\leq\overline k$.
\end{lemma}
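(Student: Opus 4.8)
The plan is to prove the two claims by simultaneous induction on $k$, showing that $e_k \le r$ and the geometric-plus-drift bound hold, while carefully tracking which order $p$ of prediction the acceptance condition \cref{eq:acceptance} selects at each round. First I would establish the base case $k = k_0$: here $\hat{\bm x}_{k_0}$ and $\bm x_{k_0}$ are related through \cref{ass:initial}, which gives $\|\bm x_{k_0} - \bm x^\ast(t_{k_0})\| \le r - (v+\sigma_1)h$. Actually, since the bound for $k_0 \le k$ must include $e_{k_0}$ on the right-hand side, the base case $k = k_0$ is the trivial inequality $e_{k_0} \le \frac{v+\sigma_1}{1-\gamma}h + e_{k_0}$; the real content is the inductive step and the auxiliary fact that the \emph{corrected} iterate stays within distance $r$ of the trajectory.

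The inductive step is where the work lies. Assume the bound holds at round $k-1$ (and that $\|\bm x_{k-1} - \bm x^\ast(t_{k-1})\|$ is suitably controlled). At round $k$, the algorithm computes $\hat{\bm x}_k = \hat{\bm x}_k^p$ for the largest $p \le P$ passing \cref{eq:acceptance}. Regardless of which $p$ is chosen, I would bound $e_k = \|\hat{\bm x}_k^p - \bm x^\ast(t_k)\|$ by splitting
\[
    e_k \le \|\hat{\bm x}_k^p - \bm x_{k-1}\| + \|\bm x_{k-1} - \bm x^\ast(t_{k-1})\| + \|\bm x^\ast(t_{k-1}) - \bm x^\ast(t_k)\|.
\]
The first term is $\le vh$ by the acceptance condition. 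The third term is $\le \sigma_1 h$ by the mean value theorem and the definition of $\sigma_1$ in \cref{ineq:condition13}. The middle term is $\|\bm x_{k-1} - \bm x^\ast(t_{k-1})\| \le \gamma e_{k-1}$ by \cref{ass:correction_local}, provided $e_{k-1} \le r$, which the induction hypothesis supplies. Hence
\[
    e_k \le (v + \sigma_1)h + \gamma e_{k-1},
\]
and unrolling this recursion from $k_0$ gives $e_k \le (v+\sigma_1)h \sum_{j=0}^{k-k_0-1}\gamma^j + \gamma^{k-k_0} e_{k_0} \le \frac{v+\sigma_1}{1-\gamma}h + \gamma^{k-k_0}e_{k_0}$, which is exactly the claimed bound. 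This also handles $k \le k_0$ only in the sense that the statement is only asserted for $k \ge k_0$.

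It remains to verify $e_k \le r$ for $k_0 + 1 \le k \le \overline k$, and here the condition \cref{ineq:condition2} on $\gamma$ enters. From the recursion and the induction hypothesis $e_{k-1} \le r$, one gets $e_k \le (v+\sigma_1)h + \gamma r$; when $r < \infty$, the second inequality $\gamma \le 1 - \frac{v+\sigma_1}{r}h$ in \cref{ineq:condition2} rearranges precisely to $(v+\sigma_1)h + \gamma r \le r$, closing the induction; when $r = \infty$ there is nothing to check. One subtlety I would be careful about: for the very first prediction rounds after $k_0$, the "past solutions" $\bm x_{k-1}, \ldots, \bm x_{k-p}$ feeding \cref{eq:prediction} may not all be close to the trajectory if $k - p < k_0$, but this does not affect the argument above, since the bound on $e_k$ only uses the acceptance condition \cref{eq:acceptance} (which holds by construction of $\hat{\bm x}_k$) together with the single iterate $\bm x_{k-1}$; the precise value of $p$ chosen is irrelevant. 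The main obstacle is therefore not any deep estimate but the bookkeeping: ensuring the induction hypothesis is stated strongly enough (both $e_{k-1} \le r$ and, via \cref{ass:correction_local}, control on the corrected iterate) to feed the next step, and checking that the case $r = \infty$ is consistently handled alongside $r < \infty$ throughout.
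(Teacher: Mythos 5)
Your proposal is correct and follows essentially the same route as the paper: the same three-term triangle-inequality split bounded by the acceptance condition ($vh$), the correction contraction ($\gamma e_{k-1}$, valid once $e_{k-1}\leq r$), and the trajectory velocity ($\sigma_1 h$), then an induction using the second inequality of \cref{ineq:condition2} to keep $e_k\leq r$, and unrolling the resulting recursion. The only point to make explicit is the base case $k=k_0+1$ of the $e_k\leq r$ induction, which must come directly from \cref{ass:initial} (i.e., $e_{k_0+1}\leq\|\bm x_{k_0}-\bm x^\ast(t_{k_0})\|+(v+\sigma_1)h\leq r$) rather than from an induction hypothesis $e_{k_0}\leq r$; you cite the needed ingredient, and the paper handles it exactly this way.
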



\begin{proof}
    We first prove 
    \begin{gather}
        e_{k}\leq \|\bm x_{k-1}-\bm x^\ast(t_{k-1})\|+(v+\sigma_1)h\label{ineq:proof_local29}
    \end{gather}
    for all $\underline k+1\leq k\leq\overline k$.
    The triangle inequality gives
    \begin{align}
        e_{k}
        &=\|\hat{\bm x}_k-\bm x^\ast(t_{k})\|\\
        &\leq\|\hat{\bm x}_k-\bm x_{k-1}\|+\|\bm x_{k-1}-\bm x^\ast(t_{k-1})\|+\|\bm x^\ast(t_{k-1})-\bm x^\ast(t_{k})\|.\label{ineq:proof_sc1}
    \end{align}
    By using the acceptance condition~\cref{eq:acceptance}, the first term of \cref{ineq:proof_sc1} is upper bounded by $vh$.
    \cref{lem:velocity1} with $p=1$ implies that the last term of \cref{ineq:proof_sc1} is upper bounded by $\sigma_1 h$.
    Thus, \cref{ineq:proof_local29} holds for all $\underline k+1\leq k\leq\overline k$.

    Next, we prove $e_k\leq r$ for all $k_0+1\leq k\leq\overline k$ by induction.
    The case $k=k_0+1$ follows from \cref{ass:initial,ineq:proof_local29}.
    Assume \cref{ineq:proof_local29} for some $k\in\{k_0+1,\ldots,\overline k-1\}$.
    Then, we have
    \[
        e_{k+1}
        \leq\|\bm x_{k}-\bm x^\ast(t_{k})\|+(v+\sigma_1)h
        \leq\gamma e_k+(v+\sigma_1)h
        \leq\gamma r+(v+\sigma_1)h
        \leq r,
    \]
    where the first inequality follows from \cref{ineq:proof_local29}; the second one, from \cref{ass:correction_local} and the induction hypothesis; the third one, from the induction hypothesis; the last one, from the second inequality of \cref{ineq:condition2}.
    Thus, $e_k\leq r$ for all $k_0+1\leq k\leq\overline k$.

    From \cref{ineq:proof_local29,ass:correction_local}, we have
    \[
        e_{k}\leq \gamma e_{k-1}+(v+\sigma_1)h
    \]
    for all $k_0+1\leq k\leq\overline k$.
    Solving this recursion gives
    \[
        e_{k}
        \leq\frac{v+\sigma_1}{1-\gamma}h+\left(e_{k_0}-\frac{v+\sigma_1}{1-\gamma}h\right)\gamma^{k-k_0}
        \leq\frac{v+\sigma_1}{1-\gamma}h+e_{k_0}\gamma^{k-k_0}
    \]
    for all $k_0\leq k\leq\overline k$.
\end{proof}

\cref{lem:error_general} gives the following sufficient condition for predictions to be accepted.



\begin{proposition}\label{prop:acceptance_local}
    Let $P\geq1$ be the maximum order of prediction in \cref{alg:hospca}.
    Suppose that \cref{ass:trajectory_local} holds.
    If
    \begin{gather}
        v=\infty,\quad p=1,\quad\text{or}\quad(2^p-2)\gamma\left(\frac{v+\sigma_1}{1-\gamma}h+e_{k_0}\gamma^{k-k_0-p}\right)+\sigma_1h+\sigma_{p}h^{p}\leq vh\label{ineq:proof_local27}
    \end{gather}
    holds for some $p\in\{1,\ldots,P\}$ and $k\in\{k_0+p,\ldots,\overline k\}$, then the prediction $\hat{\bm x}_k^{p}$ satisfies the acceptance condition~\cref{eq:acceptance}.
\end{proposition}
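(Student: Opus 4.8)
The plan is to bound $\|\hat{\bm x}_k^p - \bm x_{k-1}\|$ explicitly and show it never exceeds $vh$, which is exactly the acceptance condition~\cref{eq:acceptance}. The first two alternatives are immediate: if $v = \infty$ then~\cref{eq:acceptance} holds vacuously, and if $p = 1$ then $\hat{\bm x}_k^1 = \bm x_{k-1}$ directly from~\cref{eq:prediction}. So I would concentrate on $2 \le p \le P$ under the third alternative in~\cref{ineq:proof_local27}, together with $k \ge k_0 + p$.

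The algebraic starting point is the binomial identity $\sum_{i=1}^p (-1)^{i-1}\binom{p}{i} = 1 - (1-1)^p = 1$, which lets me write $\hat{\bm x}_k^p - \bm x_{k-1} = \sum_{i=1}^p (-1)^{i-1}\binom{p}{i}(\bm x_{k-i} - \bm x_{k-1})$. I would then decompose each increment as $(\bm x_{k-i} - \bm x^\ast(t_{k-i})) - (\bm x_{k-1} - \bm x^\ast(t_{k-1})) + (\bm x^\ast(t_{k-i}) - \bm x^\ast(t_{k-1}))$ and regroup, splitting the difference as $\hat{\bm x}_k^p - \bm x_{k-1} = \bm D + \bm T$, where $\bm D = \sum_{i=1}^p (-1)^{i-1}\binom{p}{i}(\bm x_{k-i} - \bm x^\ast(t_{k-i})) - (\bm x_{k-1} - \bm x^\ast(t_{k-1}))$ collects the correction errors and $\bm T = \sum_{i=1}^p (-1)^{i-1}\binom{p}{i}\bm x^\ast(t_{k-i}) - \bm x^\ast(t_{k-1})$ collects the trajectory terms. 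For $\bm T$, inserting $\pm\bm x^\ast(t_k)$ splits it into the $p$-point Lagrange residual $\sum_{i=1}^p (-1)^{i-1}\binom{p}{i}\bm x^\ast(t_{k-i}) - \bm x^\ast(t_k)$, bounded by $\sigma_p h^p$ via~\cref{lem:velocity1}, plus $\bm x^\ast(t_k) - \bm x^\ast(t_{k-1})$, bounded by $\sigma_1 h$ via~\cref{lem:velocity1} with $p = 1$; hence $\|\bm T\| \le \sigma_p h^p + \sigma_1 h$.

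For $\bm D$, the key point is that matching the $i = 1$ term $\binom{p}{1}(\bm x_{k-1} - \bm x^\ast(t_{k-1}))$ with the subtracted $\bm x_{k-1} - \bm x^\ast(t_{k-1})$ leaves coefficient $p - 1$, so the absolute values of all coefficients sum to $(p-1) + \sum_{i=2}^p \binom{p}{i} = 2^p - 2$, giving $\|\bm D\| \le (2^p - 2)\max_{1 \le i \le p}\|\bm x_{k-i} - \bm x^\ast(t_{k-i})\|$. To control $\|\bm x_m - \bm x^\ast(t_m)\|$ for each $m \in \{k-p, \ldots, k-1\}$, I would use that $k \ge k_0 + p$ forces $m \ge k_0$: for $m \ge k_0 + 1$, \cref{lem:error_general} gives $e_m \le r$, so \cref{ass:correction_local} yields $\|\bm x_m - \bm x^\ast(t_m)\| \le \gamma e_m$, and \cref{lem:error_general} again bounds $e_m$ by $\frac{v+\sigma_1}{1-\gamma}h + e_{k_0}\gamma^{m-k_0}$; the borderline index $m = k_0$, which arises only when $k = k_0 + p$, is handled through \cref{ass:initial}. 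Since $0 < \gamma \le 1$ and $m \ge k - p$, each such bound is at most $\gamma\bigl(\frac{v+\sigma_1}{1-\gamma}h + e_{k_0}\gamma^{k-k_0-p}\bigr)$, so $\|\bm D\| \le (2^p-2)\gamma\bigl(\frac{v+\sigma_1}{1-\gamma}h + e_{k_0}\gamma^{k-k_0-p}\bigr)$. Combining the bounds on $\|\bm D\|$ and $\|\bm T\|$ and invoking the hypothesis~\cref{ineq:proof_local27} then yields $\|\hat{\bm x}_k^p - \bm x_{k-1}\| \le vh$.

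I expect the main obstacle to be the coefficient bookkeeping: producing exactly weight $2^p - 2$ on the correction errors rather than the naive $2^p - 1$ — the saving comes from the $\binom{p}{1} - 1 = p - 1$ cancellation after the $\bm x_{k-1}$ terms are paired — and keeping the two uses of the Lagrange identity consistent so that the residual inside $\bm T$ is genuinely the $p$-point interpolation error to which \cref{lem:velocity1} applies. A secondary point of care is index tracking: all of $\bm x_{k-1}, \ldots, \bm x_{k-p}$ must lie in the range where the bound from \cref{lem:error_general} is available, which is precisely why the hypothesis requires $k \ge k_0 + p$.
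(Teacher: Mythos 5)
Your proposal is correct and takes essentially the same route as the paper: the identical decomposition of $\hat{\bm x}_k^p-\bm x_{k-1}$ into correction-error terms of total weight $2^p-2$ (bounded via \cref{ass:correction_local,lem:error_general}) and trajectory terms bounded by $\sigma_p h^p+\sigma_1 h$ via \cref{lem:velocity1}, concluding with \cref{ineq:proof_local27}. The only cosmetic difference is at the borderline index $k-i=k_0$ (arising when $k=k_0+p$), which the paper bounds by $\gamma e_{k_0}$ through \cref{ass:correction_local} just like the other indices, rather than appealing to \cref{ass:initial}.
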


\begin{proof}
    For the case $v=\infty$ or $p=1$, the condition~\cref{eq:acceptance} always holds.
    We consider the case $v<\infty$ and $p\geq2$ in the rest of the proof.

    We give an upper bound on $\|\hat{\bm x}_k^{p}-\bm x_{k-1}\|$ to prove \cref{eq:acceptance}.
    Rewriting the prediction~\cref{eq:prediction} yields
    \begin{align}
        \hat{\bm x}_k^{p}-\bm x_{k-1}
        &=\sum_{i=1}^{p}(-1)^{i-1}\binom{p}{i}\bm x_{k-i}-\bm x_{k-1}\\
        &=\sum_{i=2}^{p}(-1)^{i-1}\binom{p}{i}\prn[\big]{\bm x_{k-i}-\bm x^\ast(t_{k-i})}+(p-1)\prn[\big]{\bm x_{k-1}-\bm x^\ast(t_{k-1})}\\
        &\quad+\sum_{i=0}^{p}(-1)^{i-1}\binom{p}{i}\bm x^\ast(t_{k-i})+\bm x^\ast(t_{k})-\bm x^\ast(t_{k-1}).
    \end{align}
    By taking the norm and using the triangle inequality, we have
    \begin{align}
        \|\hat{\bm x}_k^{p}-\bm x_{k-1}\|\label{ineq:proof_local13}
        &\leq\sum_{i=2}^{p}\binom{p}{i}\|\bm x_{k-i}-\bm x^\ast(t_{k-i})\|+(p-1)\|\bm x_{k-1}-\bm x^\ast(t_{k-1})\|\\
        &\quad+\left\|\sum_{i=0}^{p}(-1)^{i-1}\binom{p}{i}\bm x^\ast(t_{k-i})\right\|+\|\bm x^\ast(t_{k})-\bm x^\ast(t_{k-1})\|.
    \end{align}
    We will bound each term on the right-hand side. 
    To bound the first and second terms, we use \cref{ass:correction_local,lem:error_general}:
    \begin{align}
        \|\bm x_{k-i}-\bm x^\ast(t_{k-i})\|
        \leq\gamma e_{k-i}
        &\leq\gamma\left(\frac{v+\sigma_1}{1-\gamma}h+e_{k_0}\gamma^{k-i-k_0}\right)\\
        &\leq\gamma\left(\frac{v+\sigma_1}{1-\gamma}h+e_{k_0}\gamma^{k-k_0-p}\right)\label{ineq:proof_local14}
    \end{align}
    for all $k_0+p\leq k\leq\overline k$ and $1\leq i\leq p$.
    The third term can be bounded by \cref{lem:velocity1}: 
    \begin{gather}
        \left\|\sum_{i=0}^{p}(-1)^i\binom{p}{i}\bm x^\ast(t_{k-i})\right\|
        \leq\sigma_{p}h^{p}.\label{ineq:proof_local15}
    \end{gather} 
    Similarly, the last term can be bounded by \cref{lem:velocity1} with $p=1$:
    \begin{gather}
        \|\bm x^\ast(t_{k})-\bm x^\ast(t_{k-1})\|
        \leq\sigma_1h.\label{ineq:proof_local25}
    \end{gather}
    By combining \cref{ineq:proof_local13,ineq:proof_local14,ineq:proof_local15,ineq:proof_local25}, we get
    \begin{align}
        \|\hat{\bm x}_k^{p}-\bm x_{k-1}\|
        &\leq\left(\sum_{i=2}^{p}\binom{p}{i}+(p-1)\right)\gamma\left(\frac{v+\sigma_1}{1-\gamma}h+e_{k_0}\gamma^{k-k_0-p}\right)+\sigma_1h+\sigma_{p}h^{p}\\
        &=(2^p-2)\gamma\left(\frac{v+\sigma_1}{1-\gamma}h+e_{k_0}\gamma^{k-k_0-p}\right)+\sigma_1h+\sigma_{p}h^{p}\label{ineq:proof_local26}
    \end{align}
    for all $k_0+p\leq k\leq\overline k$.
    Combining \cref{ineq:proof_local27,ineq:proof_local26} completes the proof.
\end{proof}


This proposition states that the prediction $\hat{\bm x}_k^{p}$ is always accepted after a specific round satisfying the last inequality of \cref{ineq:proof_local27}, since the left-hand side is monotonically decreasing in $k$.
Although $v=\infty$ is included in \cref{ineq:proof_local27}, this choice leads to a poor tracking performance when \cref{ass:trajectory_local} is violated (see \cref{thm:error_pl,thm:error_nc}).




\begin{remark}\label{rem:v}
    The last inequality in \cref{ineq:proof_local27} gives a lower bound on $v$.
    This is because the inequality is equivalent to
    \begin{gather}
        v\geq\frac{1+(2^p-3)\gamma}{1-(2^p-1)\gamma}\sigma_1+\frac{(1-\gamma)\prn*{(2^p-2)e_{k_0}\gamma^{k-k_0-p+1}+\sigma_ph^p}}{h(1-(2^p-1)\gamma)}\label{ineq:condition4}
    \end{gather}
    under the condition~\cref{ineq:condition2} and $p\geq2$.
    Consider the case where $(p,\sigma_p)=(6,10^6)$, $\gamma=0.01$, $h=0.01$, and $(k_0,k,e_{k_0})=(0,100,1)$
    as an example.
    The right-hand side of \cref{ineq:condition4} is 
    \[
        \frac{1.61}{0.37}\sigma_1+\frac{0.99\cdot(62\cdot0.01^{95}+10^{-6})}{0.01\cdot0.63}
        \approx 4.4\sigma_1.
    \]
    This suggests that the lower bound on $v$ is nearly proportional to $\sigma_1$, which represents the maximum velocity of the target trajectory.
\end{remark}

\subsection{General Tracking Error}

In this subsection, we bound the tracking error of \cref{alg:hospca} in a general form.
The result in this subsection will be used to analyze the tracking error for specific function classes in the next section.

The following lemma states that the tracking error can be bounded recursively.
The first and second terms on the right-hand side of \cref{ineq:recursion} represent the correction and prediction errors, respectively.

\begin{lemma}\label{lem:recursive_general2}
    Suppose that \cref{ass:trajectory_local} holds.
    Fix $p\in\{1,\ldots,P\}$ arbitrarily and let $k_1$ be the smallest $k\geq\underline k$ satisfying \cref{ineq:proof_local27}.
    Then, the following holds for all $k_1+P\leq k\leq\overline k$:
    \begin{gather}
        e_{k}\leq\gamma\sum_{i=1}^{P}\binom {P}{i}e_{k-i}+2^{P-p}\sigma_{p}h^{p}.\label{ineq:recursion}
    \end{gather}
\end{lemma}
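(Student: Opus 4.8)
The plan is to bound $e_k=\|\hat{\bm x}_k-\bm x^\ast(t_k)\|$ by tracking which order of prediction the algorithm actually uses at round $k$. Since $k\geq k_1+P$ and $k_1$ is the smallest index satisfying \cref{ineq:proof_local27} for the fixed $p$, \cref{prop:acceptance_local} guarantees that the order-$p$ prediction $\hat{\bm x}_k^p$ would be accepted at round $k$ (the left-hand side of \cref{ineq:proof_local27} is monotonically decreasing in $k$, as noted after the proposition). Because \cref{alg:hospca} selects the \emph{largest} order satisfying the acceptance condition, the order $q$ actually used satisfies $p\leq q\leq P$, and hence $\hat{\bm x}_k=\hat{\bm x}_k^q=\sum_{i=1}^{q}(-1)^{i-1}\binom{q}{i}\bm x_{k-i}$.

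The main estimate is then a triangle-inequality decomposition analogous to the one in the proof of \cref{prop:acceptance_local}, but comparing $\hat{\bm x}_k^q$ directly to $\bm x^\ast(t_k)$ rather than to $\bm x_{k-1}$. First I would write
\[
    \hat{\bm x}_k^q-\bm x^\ast(t_k)
    =\sum_{i=1}^{q}(-1)^{i-1}\binom{q}{i}\bigl(\bm x_{k-i}-\bm x^\ast(t_{k-i})\bigr)
    +\left(\sum_{i=1}^{q}(-1)^{i-1}\binom{q}{i}\bm x^\ast(t_{k-i})-\bm x^\ast(t_k)\right),
\]
so that taking norms gives $e_k\leq\sum_{i=1}^{q}\binom{q}{i}\|\bm x_{k-i}-\bm x^\ast(t_{k-i})\|+\bigl\|\bm x^\ast(t_k)-\sum_{i=1}^{q}(-1)^{i-1}\binom{q}{i}\bm x^\ast(t_{k-i})\bigr\|$. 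For the first sum, since $k-1\geq k_1+P-1\geq k_0+1$ (using $k_1\geq\underline k$ and, via \cref{ass:initial}/\cref{lem:error_general}, that the error is controlled from $k_0$ onward), \cref{lem:error_general} gives $e_{k-i}\leq r$, so \cref{ass:correction_local} applies: $\|\bm x_{k-i}-\bm x^\ast(t_{k-i})\|\leq\gamma e_{k-i}$. For the interpolation-residual term, \cref{lem:velocity1} with order $q$ bounds it by $h^q\sigma_q$. Here I would need to handle the bookkeeping that the residual is expressed in terms of $\sigma_q$ and $h^q$, whereas the target bound \cref{ineq:recursion} is in terms of $\sigma_p$ and $h^p$; the clean way is to note that the algorithm's acceptance guarantees the residual term is small, but more directly one bounds $\binom{q}{i}\leq\binom{P}{i}$ to convert the correction sum, and for the residual uses that $q\geq p$ together with a uniform argument — this is the step I expect to require the most care.

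Putting the pieces together, $e_k\leq\gamma\sum_{i=1}^{q}\binom{q}{i}e_{k-i}+\sigma_q h^q$. To reach the stated form, extend the sum to $i=1,\dots,P$ (the extra terms only add nonnegative quantities, and $\binom{q}{i}\leq\binom{P}{i}$ termwise), yielding $\gamma\sum_{i=1}^{P}\binom{P}{i}e_{k-i}$ for the correction part. For the residual, since the accepted order is $q\geq p$, one argues $\sigma_q h^q\leq 2^{P-p}\sigma_p h^p$ — this is where the factor $2^{P-p}$ enters: it must absorb the a priori possibility that $q>p$ forces $\sigma_q$ to be large (indeed $\sigma_q=\infty$ if $\bm x^\ast$ is not $q$-times differentiable). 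The resolution is that one does \emph{not} bound the residual via $\sigma_q$ directly; instead, since the order-$q$ prediction was chosen over order $q-1,\dots,p$ and all these failed or were not larger, one re-expresses $\hat{\bm x}_k^q$ in terms of lower-order predictions using the binomial/Pascal identity $\binom{q}{i}=\binom{q-1}{i}+\binom{q-1}{i-1}$, telescoping down to order $p$ and picking up a factor of at most $2$ per level, hence $2^{q-p}\leq 2^{P-p}$ overall, so that the residual is ultimately controlled by $\sigma_p h^p$ via \cref{lem:velocity1} at order $p$ rather than $q$. Carefully carrying out this telescoping and verifying the constant is the crux; the rest is the routine triangle-inequality and binomial-sum manipulation sketched above.
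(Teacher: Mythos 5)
Your proposal is correct and takes essentially the same route as the paper: the same decomposition of $\hat{\bm x}_k-\bm x^\ast(t_k)$ into correction errors (bounded via \cref{lem:error_general} and \cref{ass:correction_local}, then extended with $\binom{q}{i}\leq\binom{P}{i}$) plus the interpolation residual, with the accepted order $q\geq p$ supplied by \cref{prop:acceptance_local}. Your Pascal-identity telescoping of the residual into $2^{q-p}\leq 2^{P-p}$ shifted order-$p$ differences, each bounded by $\sigma_p h^p$ via \cref{lem:velocity1}, is just the inductive form of the Vandermonde convolution the paper applies in one step (including the same minor index-range check $k-d\geq\underline k+p$).
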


\begin{proof}
    Since \cref{ineq:recursion} is trivial when $\sigma_p=\infty$, we focus on the case $\sigma_p < \infty$.
    Let $p_k$ be the accepted order of prediction at round $k$, i.e., $p_k$ is the largest $q \in\{1,\ldots,P\}$ such that $\|\hat{\bm x}_k^{q}-\bm x_{k-1}\|\leq vh$.
    \cref{prop:acceptance_local} implies $p_k\geq p$ for all $k_1\leq k\leq\overline k$.

    Rewriting the prediction~\cref{eq:prediction} yields
    \begin{align}
        \hat{\bm x}_k-\bm x^\ast(t_{k})
        &=\sum_{i=1}^{p_k}(-1)^{i-1}\binom{p_k}{i}\bm x_{k-i}-\bm x^\ast(t_{k})\\
        &=\sum_{i=1}^{p_k}(-1)^{i-1}\binom {p_k}{i}\prn[\big]{\bm x_{k-i}-\bm x^\ast(t_{k-i})}+\sum_{i=0}^{p_k}(-1)^{i-1}\binom {p_k}{i}\bm x^\ast(t_{k-i}).
    \end{align}
    By taking the norm and using the triangle inequality, we have
    \begin{gather}
        e_{k}
        \leq\sum_{i=1}^{p_k}\binom {p_k}{i}\|\bm x_{k-i}-\bm x^\ast(t_{k-i})\|+\left\|\sum_{i=0}^{p_k}(-1)^i \binom {p_k}{i}\bm x^\ast(t_{k-i})\right\|.\label{ineq:proof_general6}
    \end{gather}
    The first term on the right-hand side can be bounded by \cref{lem:error_general,ass:correction_local}:
    \begin{align}
        \sum_{i=1}^{p_k}\binom {p_k}{i}\|\bm x_{k-i}-\bm x^\ast(t_{k-i})\|
        &\leq\gamma\sum_{i=1}^{p_k}\binom {p_k}{i}e_{k-i}
        \leq\gamma\sum_{i=1}^{P}\binom {P}{i}e_{k-i}\label{ineq:proof_general7}
    \end{align}
    for all $k_1+P\leq k\leq\overline k$.
    To complete the proof, we will prove
    \begin{gather}
        \left\|\sum_{i=0}^{p_k}(-1)^i \binom {p_k}{i}\bm x^\ast(t_{k-i})\right\|
        \leq2^{P-p}\sigma_{p}h^{p}\label{ineq:proof_general9}
    \end{gather}
    for all $k_1+P\leq k\leq\overline k$.


    Vandermonde's convolution formula $\binom{a+b}{i} = \sum_{j=0}^i \binom{a}{i-j} \binom{b}{j}$ gives
    \begin{align}
        \sum_{i=0}^{p_k}
        (-1)^i \binom{p_k}{i} \bm x^\ast(t_{k-i})
        &=
        \sum_{i=0}^{p_k} \sum_{j=0}^i
        (-1)^i \binom{p_k-p}{i-j} \binom{p}{j} \bm x^\ast(t_{k-i})\\
        &=
        \sum_{\substack{d,j\geq0\\d+j\leq p_k}}
        (-1)^{d+j} \binom{p_k-p}{d} \binom{p}{j} \bm x^\ast(t_{k-d-j})\\
        &=
        \sum_{d=0}^{p_k - p} \sum_{j=0}^{p}
        (-1)^{d+j} \binom{p_k-p}{d} \binom{p}{j} \bm x^\ast(t_{k-d-j})\\
        &=
        \sum_{d=0}^{p_k - p} (-1)^d
        \binom{p_k-p}{d}
            \sum_{j=0}^{p} (-1)^j \binom{p}{j} \bm x^\ast(t_{k-d-j})
    \end{align}
    where we set $d \coloneqq i - j$ for the second equality and omit the terms with zero binomial coefficients for the third equality.
    We evaluate the norm of the above summation using \cref{lem:velocity1}, where $k$ is replaced by $k-d$, as follows:
    \begin{align}
        \left\|\sum_{j=0}^{p} (-1)^j \binom{p}{j} \bm x^\ast(t_{(k-d)-j})\right\|
        \leq
        \sigma_p h^p.
        \label{ineq:proof_general_marumo}
    \end{align}
    To apply the lemma, we verify that $\underline k+p \leq k-d \leq \overline k$.
    The upper bound follows directly as $k - d \leq k \leq \overline{k}$, and the lower bound is confirmed by
    \[
        k-d \geq (k_1 + P) - (p_k - p) = k_1 + p + (P - p_k) \geq k_1 + p \geq \underline k + p.
    \]
    We have thus obtained \cref{ineq:proof_general_marumo}, and hence 
    \begin{align}
        \left\|\sum_{i=0}^{p_k}
            (-1)^i \binom{p_k}{i} \bm x^\ast(t_{k-i})\right\|
        \leq
        \sum_{d=0}^{p_k - p} \binom{p_k-p}{d} \sigma_p h^p
        =
        2^{p_k-p}\sigma_{p}h^{p}
        \leq
        2^{P-p}\sigma_{p}h^{p},
    \end{align}
    which proves \cref{ineq:proof_general9}.

    Combining \cref{ineq:proof_general6,ineq:proof_general7,ineq:proof_general9} completes the proof.
\end{proof}

The proof of \cref{lem:recursive_general2} depends on the inequality $p_k\geq p$, which is guaranteed by \cref{prop:acceptance_local}.
Note that $p_k=p$ is not necessary.






By \cref{lem:recursive_general2}, \cref{alg:hospca} guarantees an $O(h^{p})$ asymptotic tracking error.


\begin{theorem}
    [Tracking Error of SHARP]\label{thm:error_general}
    Suppose that \cref{ass:trajectory_local} holds.
    Fix $p\in\{1,\ldots,P\}$ arbitrarily and let $k_1$ be the smallest $k\geq\underline k$ satisfying \cref{ineq:proof_local27}.
    Then, there exists a constant $M\geq0$ which is independent of $\overline k$ such that the following holds 
    for all $k_1+P\leq k\leq\overline k$:
    \begin{gather}
        e_k\leq\frac{2^{P-p}\sigma_{p}}{1-(2^{P}-1)\gamma}h^{p}+M\prn*{(1+\gamma^{-1})^{1/P}-1}^{-k}.\label{ineq:error_general0}
    \end{gather}
    Furthermore, if $\overline k=\infty$, the following holds:
    \begin{gather}
        \limsup_{k\to\infty}e_k\leq\frac{2^{P-p}\sigma_{p}}{1-(2^{P}-1)\gamma}h^{p}.\label{ineq:error_general}
    \end{gather}
\end{theorem}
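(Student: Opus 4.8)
The plan is to treat \cref{ineq:recursion} from \cref{lem:recursive_general2} as a linear recursive inequality in the sequence $\{e_k\}_k$ and analyze its long-run behaviour. Writing $a \coloneqq \gamma$, $c \coloneqq 2^{P-p}\sigma_p h^p$, and noting that $\sum_{i=1}^{P}\binom{P}{i} = 2^P - 1$, the recursion reads $e_k \leq a\sum_{i=1}^{P}\binom{P}{i}e_{k-i} + c$ for all $k_1 + P \leq k \leq \overline k$. The natural first move is to split $e_k = \bar e + \delta_k$, where $\bar e \coloneqq c/(1 - (2^P-1)a)$ is the fixed point of the associated equality recursion (well-defined and positive since $\gamma < 1/(2^P-1)$ by \cref{ineq:condition2}). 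Substituting, the residual $\delta_k$ satisfies the homogeneous inequality $\delta_k \leq a\sum_{i=1}^{P}\binom{P}{i}\delta_{k-i}$ for $k \geq k_1 + P$, while for the finitely many indices $k \in \{k_1, \ldots, k_1 + P - 1\}$ the quantity $\delta_k = e_k - \bar e$ is just some fixed (possibly negative, possibly positive) real number, bounded in absolute value by $\max_{k_1 \le k < k_1 + P} e_k + \bar e$, which by \cref{lem:error_general} is finite and independent of $\overline k$.

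Next I would control $\delta_k$ by a geometric majorant. Define $u_k \coloneqq \max(\delta_k, 0)$; then $u_k$ also satisfies $u_k \leq a\sum_{i=1}^P \binom{P}{i}u_{k-i}$ for $k \geq k_1 + P$ (dropping the negative part only helps). I claim $u_k \leq B\rho^k$ for a suitable $B \geq 0$ and a suitable $\rho \in (0,1)$, to be chosen so that $\rho$ dominates the homogeneous recursion, i.e. $a\sum_{i=1}^P\binom{P}{i}\rho^{-i} \leq 1$. The cleanest choice is to pick $\rho$ so that $a\sum_{i=1}^P\binom{P}{i}\rho^{-i} \leq a(\rho^{-1} + 1)^P = 1$ — wait, that over-counts the $i=0$ term, so more carefully: $\sum_{i=1}^P\binom{P}{i}\rho^{-i} = (1 + \rho^{-1})^P - 1$, so the requirement is $a\big((1+\rho^{-1})^P - 1\big) \leq 1$, i.e. $(1 + \rho^{-1})^P \leq 1 + a^{-1}$, i.e. $\rho^{-1} \leq (1 + a^{-1})^{1/P} - 1$, i.e. $\rho \geq \big((1+\gamma^{-1})^{1/P} - 1\big)^{-1}$. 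Since $\gamma < 1/(2^P - 1)$ forces $(1+\gamma^{-1})^{1/P} - 1 > 1$, this $\rho$ can be taken in $(0,1)$; in fact taking $\rho$ to be exactly $\big((1+\gamma^{-1})^{1/P}-1\big)^{-1}$ gives the base of the exponential appearing in \cref{ineq:error_general0}. With $\rho$ fixed, choose $B$ large enough that $u_k \leq B\rho^k$ on the $P$ seed indices $k_1 \le k < k_1 + P$; then a straightforward induction on $k$ propagates the bound $u_k \leq B\rho^k$ to all $k \leq \overline k$, because $a\sum_{i=1}^P\binom{P}{i}B\rho^{k-i} = B\rho^k \cdot a\big((1+\rho^{-1})^P - 1\big) \leq B\rho^k$. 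Setting $M \coloneqq B$ (which depends on $\gamma, P, h, k_1$, and the seed values $e_{k_1}, \ldots, e_{k_1+P-1}$, all independent of $\overline k$ by \cref{lem:error_general}) yields $e_k = \bar e + \delta_k \leq \bar e + u_k \leq \bar e + M\rho^k$, which is exactly \cref{ineq:error_general0}.

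For the second claim, set $\overline k = \infty$: then the bound \cref{ineq:error_general0} holds for all $k \geq k_1 + P$, and since $\rho \in (0,1)$ the term $M\rho^k \to 0$, so $\limsup_{k\to\infty} e_k \leq \bar e = \frac{2^{P-p}\sigma_p}{1 - (2^P-1)\gamma}h^p$, giving \cref{ineq:error_general}. I should double-check one edge case: if $\sigma_p = \infty$ then $c = \infty$ and $\bar e = \infty$, so both inequalities are vacuously true; likewise the proof of \cref{lem:recursive_general2} already handled this, so I only need the finite case.

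The main obstacle, and the only place requiring genuine care, is verifying that the constant $M$ can be chosen independently of $\overline k$. This hinges on the seed values $e_{k_1}, \ldots, e_{k_1 + P - 1}$ being bounded by a quantity not depending on $\overline k$ — which is supplied by \cref{lem:error_general}, whose bound $e_k \leq \frac{v+\sigma_1}{1-\gamma}h + e_{k_0}\gamma^{k-k_0}$ is manifestly $\overline k$-free (here I am in the regime $r < \infty$; when $r = \infty$ one argues analogously, or simply notes $e_{k_1},\dots,e_{k_1+P-1}$ are fixed finite numbers once the trajectory and algorithm run is fixed). A secondary subtlety is the elementary inequality $(1 + \rho^{-1})^P - 1 = \sum_{i=1}^P \binom{P}{i}\rho^{-i}$ and the monotonicity argument showing the chosen $\rho$ lies in $(0,1)$; these are routine but must be stated to justify that the geometric term in \cref{ineq:error_general0} genuinely decays.
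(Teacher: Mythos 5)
Your argument is correct, and it follows the same overall strategy as the paper: both invoke \cref{lem:recursive_general2}, peel off the steady-state constant $\frac{2^{P-p}\sigma_p}{1-(2^P-1)\gamma}h^p$, and show that the residual decays geometrically with the exact base $\bigl((1+\gamma^{-1})^{1/P}-1\bigr)^{-1}$. The difference lies in how that decay is established. The paper defines an auxiliary sequence $a_k$ obeying the \emph{equality} recursion $a_k=\gamma\sum_{i=1}^P\binom{P}{i}a_{k-i}$, proves $e_k\leq a_k+\bar e$ by induction, and then bounds $a_k$ by computing all $P$ roots of the characteristic polynomial $(1+\gamma)z^P-\gamma(z+1)^P$, noting they are distinct and that the largest in modulus is $\bigl((1+\gamma^{-1})^{1/P}-1\bigr)^{-1}$. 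You instead subtract the fixed point directly, take the positive part $u_k=\max(\delta_k,0)$, and verify by a one-line induction that $B\rho^k$ with $\rho=\bigl((1+\gamma^{-1})^{1/P}-1\bigr)^{-1}$ is a supersolution, using $\sum_{i=1}^P\binom{P}{i}\rho^{-i}=(1+\rho^{-1})^P-1$. Your route is more elementary and self-contained: it needs neither the explicit root formula nor the distinctness of the roots (which the paper must check to invoke the general solution of a linear recursion), and the positive-part device cleanly handles the inequality-versus-equation issue that the paper handles through its auxiliary-sequence induction. The paper's route, in exchange, exhibits the full root structure, which gives a finer picture of the transient. Your treatment of the edge cases ($\sigma_p=\infty$; independence of $M$ from $\overline k$ via the fixed, $\overline k$-free seed values) matches what the paper needs, so there is no gap.
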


\begin{proof}
    \cref{lem:recursive_general2} guarantees that the recursion~\cref{ineq:recursion} holds.
    Solving the recursion gives the theorem.
    See \cref{sec:proof_recursive0} for the details.
\end{proof}

\section{Specific Tracking Error Analysis}\label{sec:specific}

This section analyzes the tracking error of the proposed method for strongly convex functions and Polyak--\L ojasiewicz (P\L) functions using \cref{thm:error_general}.
Recall that \cref{thm:error_general} provides error bounds under \cref{ass:trajectory_local}. 
\cref{sec:sc} does not assume \cref{ass:trajectory_local} but instead derives it from strong convexity, and \cref{sec:pl_local} assume \cref{ass:trajectory_local1,ass:trajectory_local2,ass:initial} and derives \cref{ass:correction_local} from the P\L~condition.



Throughout this section, the following conditions are assumed.




\begin{assumption}\label{ass:smooth}
    Let $L_{2,0}>0$ be a constant.
    \begin{enuminasm}
        \item $\|\nabla_{\bm x}f(\bm x;t)-\nabla_{\bm x}f(\bm y;t)\|\leq L_{2,0}\|\bm x-\bm y\|$ for all $\bm x,\bm y\in\R^n$ and $t\geq0$.\label{ass:smooth1}
        \item Line~\ref{line:correction1} of \cref{alg:hospca} computes $\bm x_k$ by $C$ iterations of GD~\cref{eq:correction} with step size $\alpha\in(0,2/L_{2,0})$.\label{ass:smooth2}
    \end{enuminasm}
\end{assumption}


\cref{ass:smooth1} is called $L_{2,0}$-smoothness, which implies 
\begin{gather}
    |f(\bm y;t)-f(\bm x;t)-\mybracket{\nabla_{\bm x}f(\bm x;t),\bm y-\bm x}|\leq\frac{L_{2,0}}2\|\bm y-\bm x\|^2\label{ineq:descent_lemma}
\end{gather}
for all $\bm x,\bm y\in\R^n$ and $t\geq0$~\citep[Lemma 1.2.3]{nesterov2018lectures}.
By using \cref{ineq:descent_lemma,eq:correction}, we can see that \cref{ass:smooth} guarantees 
\begin{align}
    f(\bm x_{k}^{c+1};t_{k})
    &\leq f(\bm x_{k}^c;t_{k})+\mybracket{\nabla_{\bm x}f(\bm x_{k}^c;t_{k}),\bm x_{k}^{c+1}-\bm x_{k}^c}+\frac{L_{2,0}}2\|\bm x_{k}^{c+1}-\bm x_{k}^c\|^2\\
    &= f(\bm x_{k}^c;t_{k})-\left(\alpha-\frac{L_{2,0}}2\alpha^2\right)\|\nabla_{\bm x}f(\bm x_{k}^c;t_{k})\|^2\label{ineq:descent}
\end{align}
for all $0\leq c\leq C-1$.

Although this section only exemplifies two cases, globally strongly convex functions and locally P\L~functions, we can also apply \cref{thm:error_general} to other settings (e.g., locally strongly convex functions and globally P\L~functions).
Since the results are similar, we omit the details.
The correspondence of constants in \cref{alg:hospca,thm:error_general} is shown in \cref{tab:specific}.


\begin{table}[t]
    \centering
    \caption{Correspondence of constants in \cref{alg:hospca,thm:error_general}.
    Constants $\theta_1$, $\theta_2$, and $\rho$ are deifined in \cref{thm:sc,cor:pl_local}.
    The column $C$ shows a lower bound when $\alpha=1/L_{2,0}$, where $\kappa\coloneqq L_{2,0}/\mu$.
    The column $v$ shows whether we can take $v=\infty$ or not, and $v$ should be set to satisfy \cref{ineq:condition4} when $v<\infty$.}
    \begin{tabular}{l|ccc}
        \toprule
        Assumption & $\gamma$ & $C$ & $v$ \\
        \midrule
        $\mu$-strongly convex (\cref{thm:sc}) & $\theta_1^C$ & $O(\kappa)$ & $\infty$ \\
        $\mu$-P\L & $\rho\theta_2^{C/2}$ & $O(\kappa\log\kappa)$ & $\infty$ \\
        Locally $\mu$-strongly convex & $\theta_1^C$ & $O(\kappa)$ & $<\infty$ \\
        Locally $\mu$-P\L~(\cref{cor:pl_local}) & $\rho\theta_2^{C/2}$ & $O(\kappa\log\kappa)$ & $<\infty$ \\
        \bottomrule
    \end{tabular}
    \label{tab:specific}
\end{table}

\subsection{Case 1: Strongly Convex Functions}\label{sec:sc}

Under strong convexity and smoothness, it is known in time-invariant settings that GD converges linearly to the optimum (e.g., \citep[p.~15]{ryu2016primer}).
Using this fact, we can derive the following tracking error result from \cref{thm:error_general}.


\begin{theorem}[Tracking Error of SHARP, Strongly Convex]\label{thm:sc}
    Suppose that \cref{ass:smooth} holds, $f$ is twice continuously differentiable, $f(\cdot;t)$ is $\mu$-strongly convex (i.e., $\nabla_{\bm x\bm x}f(\bm x;t)\succeq\mu\bm I$ for all $\bm x\in\R^n$) for all $t\geq0$, and there exists $L_{1,1}\geq0$ such that $\|\nabla_{\bm xt}f(\bm x;t)\|\leq L_{1,1}$ for all $\bm x\in\R^n$ and $t\geq0$.
    Set the parameter $v$ of \cref{alg:hospca} as $v=\infty$ and the iteration number $C$ of GD~\cref{eq:correction} as
    \begin{gather}
        C>\frac{\log(2^{P}-1)}{\log(\theta_1^{-1})},\quad\text{where}\quad\theta_1\coloneqq\max\{|1-\alpha\mu|,|1-\alpha L_{2,0}|\}<1.\label{ineq:condition6}
    \end{gather}
    Then, \cref{ass:trajectory_local} holds with $(\underline k,\overline k,r,\gamma)=(0,\infty,\infty,\theta_1^C)$, and therefore the tracking error bound~\cref{ineq:error_general} holds for all $1\leq p\leq P$ satisfying $\sigma_p<\infty$.
\end{theorem}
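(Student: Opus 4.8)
The plan is to verify each part of \cref{ass:trajectory_local} in turn, using the standard facts about GD on strongly convex smooth functions, and then invoke \cref{thm:error_general}. Since we want $(\underline k,\overline k,r)=(0,\infty,\infty)$, the conditions simplify considerably: \cref{ineq:condition13} becomes $\sigma_1<\infty$, \cref{ass:initial} is vacuous (the case $r=\infty$), and the second inequality in \cref{ineq:condition2} becomes $\gamma\le 1$, so the only real content is $\gamma<1/(2^P-1)$ together with \cref{ass:trajectory_local1,ass:trajectory_local2,ass:correction_local}.

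First I would establish \cref{ass:trajectory_local1,ass:trajectory_local2}. For each $t$, $\mu$-strong convexity gives a unique stationary point $\bm x^\ast(t)$ with $\nabla_{\bm x}f(\bm x^\ast(t);t)=\bm 0$, which gives \cref{ass:trajectory_local1} with $\underline k=0$, $\overline k=\infty$. Differentiability of $\bm x^\ast$ follows from the implicit function theorem applied to $\nabla_{\bm x}f(\bm x;t)=\bm 0$: since $f$ is twice continuously differentiable and $\nabla_{\bm x\bm x}f(\bm x^\ast(t);t)\succeq\mu\bm I$ is invertible, $\bm x^\ast$ is $C^1$ and $\dot{\bm x}^\ast(t)=-(\nabla_{\bm x\bm x}f(\bm x^\ast(t);t))^{-1}\nabla_{\bm xt}f(\bm x^\ast(t);t)$. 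Taking norms and using $\|(\nabla_{\bm x\bm x}f)^{-1}\|\le 1/\mu$ and $\|\nabla_{\bm xt}f\|\le L_{1,1}$ yields $\sigma_1=\sup_t\|\dot{\bm x}^\ast(t)\|\le L_{1,1}/\mu<\infty$, which is \cref{ineq:condition13} in the $r=\infty$ case.

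Next I would verify \cref{ass:correction_local} with $\gamma=\theta_1^C$. The correction step is $C$ iterations of GD~\cref{eq:correction} with step size $\alpha\in(0,2/L_{2,0})$ on the $\mu$-strongly convex, $L_{2,0}$-smooth function $f(\cdot;t_k)$. The classical contraction estimate (cf.\ \citep[p.~15]{ryu2016primer}) says that one GD step is a $\theta_1$-contraction toward the minimizer $\bm x^\ast(t_k)$, where $\theta_1=\max\{|1-\alpha\mu|,|1-\alpha L_{2,0}|\}$; iterating $C$ times gives $\|\bm x_k-\bm x^\ast(t_k)\|\le\theta_1^C\|\hat{\bm x}_k-\bm x^\ast(t_k)\|$, i.e.\ $\gamma=\theta_1^C$. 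Note $\theta_1<1$ precisely because $\alpha\in(0,2/L_{2,0})$ and $\mu\le L_{2,0}$, so this is well defined. The condition $\gamma<1/(2^P-1)$ is exactly $\theta_1^C<1/(2^P-1)$, which rearranges to $C>\log(2^P-1)/\log(\theta_1^{-1})$ — this is assumption~\cref{ineq:condition6}. With $r=\infty$, the hypothesis "$\|\hat{\bm x}_k-\bm x^\ast(t_k)\|\le r$" in \cref{ass:correction_local} is automatic, so there is no localization issue. This establishes all of \cref{ass:trajectory_local} with the claimed constants; applying \cref{thm:error_general} with $\overline k=\infty$ then gives \cref{ineq:error_general} for every $p\in\{1,\ldots,P\}$ with $\sigma_p<\infty$.

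I do not anticipate a genuine obstacle here — the theorem is essentially a bookkeeping exercise matching hypotheses. The only point requiring a little care is the implicit-function-theorem argument for differentiability of $\bm x^\ast$ and the resulting bound on $\sigma_1$; one must make sure the global invertibility of $\nabla_{\bm x\bm x}f$ (from $\mu$-strong convexity everywhere) and the global bound on $\nabla_{\bm xt}f$ are used so that the formula for $\dot{\bm x}^\ast$ holds for all $t\ge 0$, not just locally. A secondary subtlety is confirming that $\theta_1<1$: this needs $\alpha<2/L_{2,0}$ (for $|1-\alpha L_{2,0}|<1$) and $\alpha>0$ together with $\mu\le L_{2,0}$ (for $|1-\alpha\mu|<1$), all of which are guaranteed by \cref{ass:smooth2}. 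Everything else is a direct substitution into \cref{thm:error_general}.
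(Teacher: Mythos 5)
Your proposal is correct and follows essentially the same route as the paper's proof: strong convexity plus the implicit function theorem yields \cref{ass:trajectory_local1,ass:trajectory_local2,ass:initial} with $(\underline k,\overline k,r)=(0,\infty,\infty)$ and $\sigma_1\leq L_{1,1}/\mu$, the standard GD contraction gives \cref{ass:correction_local} with $\gamma=\theta_1^C$, and \cref{ineq:condition6} is exactly $\gamma<(2^P-1)^{-1}$. The only step you leave implicit is the observation that $v=\infty$ makes the acceptance condition~\cref{ineq:proof_local27} hold trivially, so \cref{thm:error_general} applies with $k_1=\underline k$; the paper dispatches this in one sentence, and it is a harmless omission.
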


\begin{proof}
    First, we prove that \cref{ass:trajectory_local1,ass:trajectory_local2,ass:initial} hold with $(\underline k,\overline k,r)=(0,\infty,\infty)$.
    From the strong convexity of $f(\cdot;t)$, there exists a unique optimum: $\{\bm x^\ast(t)\}=\argmin_{\bm x\in\R^n}f(\bm x;t)$.
    The first-order optimality condition gives 
    \begin{gather}
        \nabla_{\bm x}f(\bm x^\ast(t);t)=\bm 0\label{eq:optimality}    
    \end{gather}
    for all $t\geq0$, which implies that \cref{ass:trajectory_local1} holds with $(\underline k,\overline k)=(0,\infty)$.
    Since $\nabla_{\bm x}f$ is continuously differentiable and $\nabla_{\bm x\bm x}f(\bm x^\ast(t);t)$ is invertible for all $t\geq0$, the implicit function theorem guarantees that $\bm x^\ast$ is differentiable for all $t\geq0$.
    Differentiating \cref{eq:optimality} by $t$ yields 
    \begin{gather}
        \nabla_{\bm x\bm x}f(\bm x^\ast(t);t)\dot{\bm x}^\ast(t)+\nabla_{\bm xt}f(\bm x^\ast(t);t)=\bm 0.\label{eq:rem_sigma}
    \end{gather}
    From $\nabla_{\bm x\bm x}f(\bm x;t)\succeq\mu\bm I$ and $\|\nabla_{\bm xt}f(\bm x;t)\|\leq L_{1,1}$, we have
    \[
        \sigma_1
        =\sup_{t\geq 0}\|\dot{\bm x}^\ast(t)\|
        =\sup_{t\geq 0}\big\|\nabla_{\bm x\bm x}f(\bm x^\ast(t);t)^{-1}\nabla_{\bm xt}f(\bm x^\ast(t);t)\big\|
        \leq\frac{L_{1,1}}{\mu}<\infty,
    \]
    which implies that \cref{ass:trajectory_local2} holds with $r=\infty$.
    Since $r=\infty$, \cref{ass:initial} is satisfied.

    Next, we prove that \cref{ass:correction_local} holds.
    The linear convergence of GD in the time-invariant setting~\citep[p.~15]{ryu2016primer} implies 
    \[
        \|\bm x_{k}-\bm x^\ast(t_{k})\|\leq\theta_1^Ce_{k}
    \]
    for all $k\geq1$.
    The assumption \cref{ineq:condition6} guarantees that the first inequality in \cref{ineq:condition2} is satisfied with $\gamma=\theta_1^C$.
    The second inequality in \cref{ineq:condition2} is trivial since $r=\infty$.
    Thus, \cref{ass:correction_local} is satisfied with $(r,\gamma)=(\infty,\theta_1^C)$.

    Since $v=\infty$, the condition \cref{ineq:proof_local27} is satisfied.
    Applying \cref{thm:error_general} completes the proof.
\end{proof}

When the step size is $\alpha=1/L_{2,0}$, the condition~\cref{ineq:condition6} is satisfied if $C\geq\kappa\log(2^{P}-1)$ holds, where $\kappa\coloneqq L_{2,0}/\mu\geq1$ is the condition number.
This lower bound is derived from $\log(\theta_1^{-1})=-\log\theta_1\geq1-\theta_1=\kappa^{-1}$.
Thus, we can take $C=\lceil\kappa\log(2^{P}-1)\rceil$, and the computational cost per iteration becomes $O(\kappa)$, as shown in \cref{tab:specific}.

The inequality~\cref{ineq:error_general} guarantees an $O(h^p)$ error only when $\sigma_p<\infty$.
The following proposition gives a sufficient condition for $\sigma_2<\infty$.
The proof is similar to that of $\sigma_1<\infty$ in \cref{thm:sc}.
Sufficient conditions for $\sigma_p<\infty$ for $p>2$ can be derived in the same way.

\begin{proposition}\label{prop:sigma2}
    Let $f$ be three-times continuously differentiable.
    Suppose that $f(\cdot;t)$ is $\mu$-strongly convex for all $t\geq0$ and there exist constants $L_{1,1}, L_{3,0}, L_{2,1}, L_{1,2}\geq0$ such that 
    \begin{align}
        \begin{alignedat}{2}
            \|\nabla_{\bm xt}f(\bm x;t)\|&\leq L_{1,1},\quad 
            &\|\nabla_{\bm x\bm x\bm x}f(\bm x;t)\|&\leq L_{3,0},\\
            \|\nabla_{\bm x\bm x t}f(\bm x;t)\|&\leq L_{2,1},\quad
            &\|\nabla_{\bm x tt}f(\bm x;t)\|&\leq L_{1,2}
            \label{ineq:smoothness}
        \end{alignedat}    
    \end{align}
    for all $\bm x\in\R^n$ and $t\geq0$.
    Then, $\sigma_2<\infty$ holds with $(\underline k,\overline k)=(0,\infty)$.
\end{proposition}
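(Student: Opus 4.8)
The plan is to mimic the argument used for $\sigma_1 < \infty$ in the proof of \cref{thm:sc}, but carried one derivative further. Recall that there we differentiated the optimality condition $\nabla_{\bm x}f(\bm x^\ast(t);t)=\bm 0$ once in $t$ to obtain \cref{eq:rem_sigma}, namely $\nabla_{\bm x\bm x}f(\bm x^\ast(t);t)\dot{\bm x}^\ast(t)+\nabla_{\bm xt}f(\bm x^\ast(t);t)=\bm 0$, and then solved for $\dot{\bm x}^\ast(t)$ using invertibility of the Hessian together with the bounds $\nabla_{\bm x\bm x}f\succeq\mu\bm I$ and $\|\nabla_{\bm xt}f\|\le L_{1,1}$. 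First I would note that since $f$ is three-times continuously differentiable and the Hessian is everywhere invertible, the implicit function theorem (or simply differentiating \cref{eq:rem_sigma}, whose terms are now $C^1$ in $t$ along the trajectory) shows that $\bm x^\ast$ is twice continuously differentiable, so $\sigma_2$ is a genuine supremum rather than $\infty$ by the degenerate branch of its definition.

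Next I would differentiate \cref{eq:rem_sigma} with respect to $t$. Writing $H(t)\coloneqq\nabla_{\bm x\bm x}f(\bm x^\ast(t);t)$, the product/chain rule gives a relation of the schematic form
\begin{align}
    H(t)\,\ddot{\bm x}^\ast(t)
    + \nabla_{\bm x\bm x\bm x}f(\bm x^\ast(t);t)[\dot{\bm x}^\ast(t),\dot{\bm x}^\ast(t)]
    + 2\,\nabla_{\bm x\bm x t}f(\bm x^\ast(t);t)[\dot{\bm x}^\ast(t)]
    + \nabla_{\bm x tt}f(\bm x^\ast(t);t)
    = \bm 0,
\end{align}
where the middle two terms come from differentiating $H(t)\dot{\bm x}^\ast(t)$ and the last from differentiating $\nabla_{\bm xt}f(\bm x^\ast(t);t)$, each via the chain rule in $\bm x^\ast(t)$ and the explicit $t$-dependence. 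Solving for $\ddot{\bm x}^\ast(t)$ and taking norms, using $\|H(t)^{-1}\|\le 1/\mu$, the three third-derivative bounds in \cref{ineq:smoothness}, the already-established bound $\|\dot{\bm x}^\ast(t)\|\le L_{1,1}/\mu$, and submultiplicativity of the operator (tensor) norm, yields
\begin{align}
    \|\ddot{\bm x}^\ast(t)\|
    \le \frac{1}{\mu}\left(L_{3,0}\,\frac{L_{1,1}^2}{\mu^2} + 2L_{2,1}\,\frac{L_{1,1}}{\mu} + L_{1,2}\right),
\end{align}
a finite constant independent of $t$; taking the supremum over $t\ge 0$ gives $\sigma_2<\infty$ with $(\underline k,\overline k)=(0,\infty)$.

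The only genuine obstacle is bookkeeping: correctly differentiating the tensor-valued expression $\nabla_{\bm x\bm x}f(\bm x^\ast(t);t)\dot{\bm x}^\ast(t)$, keeping track of the factor of $2$ (one contribution from $\partial_{\bm x}$ of $\nabla_{\bm x\bm x}f$ contracted with $\dot{\bm x}^\ast$, combined with the structurally identical term from differentiating the other factor of $\dot{\bm x}^\ast$ inside a second-derivative, which after symmetry of mixed partials merges), and making sure the induced operator norm inequalities for the tensor contractions go through — but these are exactly the routine estimates the paper already invokes for $\sigma_1$, applied once more, and the statement explicitly flags that "the proof is similar to that of $\sigma_1<\infty$" and that higher $p$ follow "in the same way." So I would present the two displayed identities above, cite the implicit function theorem for the $C^2$ regularity, and close with the norm estimate.
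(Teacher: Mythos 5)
Your proposal is correct and follows essentially the same route as the paper: differentiate \cref{eq:rem_sigma} once more in $t$ (with twice differentiability of $\bm x^\ast$ justified via the implicit function theorem), solve for $\ddot{\bm x}^\ast(t)$, and bound its norm using $\nabla_{\bm x\bm x}f\succeq\mu\bm I$, $\|\dot{\bm x}^\ast\|\leq L_{1,1}/\mu$, and \cref{ineq:smoothness}, yielding the same constant $L_{3,0}L_{1,1}^2/\mu^3+2L_{2,1}L_{1,1}/\mu^2+L_{1,2}/\mu$ as in the paper's proof.
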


\begin{proof}
    The equation \cref{eq:rem_sigma} holds in the same way as in the proof of \cref{thm:sc}.
    Since $\nabla_{\bm x}f$ is twice continuously differentiable and $\nabla_{\bm x\bm x}f(\bm x^\ast(t);t)$ is invertible for all $t\geq0$, the implicit function theorem guarantees that $\bm x^\ast$ is twice differentiable for all $t\geq0$.
    Differentiating \cref{eq:rem_sigma} by $t$ again gives
    \begin{align}
        \nabla_{\bm x\bm x\bm x}f(\bm x^\ast(t);t)[\dot{\bm x}^\ast]^2+2\nabla_{\bm x\bm xt}f(\bm x^\ast(t);t)\dot{\bm x}^\ast(t)
        +\nabla_{\bm x\bm x}f(\bm x^\ast(t);t)\ddot{\bm x}^\ast(t)+\nabla_{\bm xtt}f(\bm x^\ast(t);t)&=\bm 0.
    \end{align}
    From the assumption~\cref{ineq:smoothness} and $\nabla_{\bm x\bm x}f(\bm x;t)\succeq\mu\bm I$, we have
    \[
        \sigma_2=\sup_{t\geq 0}\|\ddot{\bm x}^\ast(t)\| \leq \frac{L_{3,0}L_{1,1}^2}{\mu^3}+\frac{2L_{2,1}L_{1,1}}{\mu^2}+\frac{L_{1,2}}\mu<\infty,
    \]
    which completes the proof.
\end{proof}

\cref{prop:sigma2} combined with \cref{thm:sc} implies that the tracking error of the proposed method is $O(h^2)$ for strongly convex functions under the assumption~\cref{ineq:smoothness}.
These assumptions are standard to guarantee an $O(h^2)$ tracking error for strongly convex functions~\citep{lin2019simplified,simonetto16}.

\begin{remark}
    The prediction scheme $\hat{\bm x}_k^2=2\bm x_{k-1}-\bm x_{k-2}$ has already been proposed by \citet{lin2019simplified}, though a revision to their theoretical analyses might be necessary.
    We need to add an assumption on the iteration number of the correction step to justify the identification between the computed point $\bm x_k$ and the solution of a dynamical system~\citep[eq. (7)]{lin2019simplified} derived from the continuous-time limit of the update.
    As a result of this revision, our theorem does not need the assumption $\|\nabla_{\bm x}f\|\leq L_{1,0}$, which is assumed in the previous work, and we successfully obtained the sufficient condition~\cref{ineq:condition6} for $C$.
\end{remark}

\subsection{Case 2: P\L~Functions}\label{sec:pl_local}

This subsection replaces the strong convexity in \cref{thm:sc} with the P\L~condition around $\bm x^\ast(t)$.
Strongly convex functions satisfy the P\L~condition (see, e.g., \citep[Theorem 2]{karimi2016linear}).


\begin{definition}
    [Polyak--\L ojasiewicz]\label{def:pl}
    Let $\mathcal X\subset\R^n$ be a nonempty set.
    For $\mu>0$, a differentiable function $f(\cdot;t)\colon\R^n\to\R$ is said to be a $\mu$-P\L~function on $\mathcal X$ if 
    \[
        \frac1{2\mu}\|\nabla_{\bm x} f(\bm x;t)\|^2\geq f(\bm x;t)-\inf_{\bm y\in\mathcal X}f(\bm y;t)
    \]
    holds for all $\bm x\in\mathcal X$.
\end{definition}

If the P\L~condition holds on $\R^n$, GD converges linearly~\citep[Theorem 4]{polyak1963gradient}.
Under the P\L~condition, the following tracking error result is derived by using \cref{thm:error_general}.

\begin{theorem}[Tracking Error of SHARP, Locally P\L]\label{cor:pl_local}
    Suppose that \cref{ass:trajectory_local1,ass:trajectory_local2,ass:initial,ass:smooth} hold.
    Let $r$ be the constant in \cref{ass:trajectory_local}.
    Assume that there exists $\mu>0$ such that $f(\cdot;t)$ is a $\mu$-P\L~function on $\{\bm x\in\R^n\mid \|\bm x-\bm x^\ast(t)\|\leq (1+\rho)r\}$ for all $t\geq0$, where
    \[
        \rho\coloneqq\frac{1}{1-\sqrt{\theta_2}}\sqrt{\frac{\alpha L_{2,0}}{2-\alpha L_{2,0}}}>0\quad\text{and}\quad
        \theta_2\coloneqq1-\alpha\mu(2-\alpha L_{2,0})<1.
    \]
    Set $C$ sufficiently large to satisfy \cref{ineq:condition2} with $\gamma\coloneqq\rho\theta_2^{C/2}$.
    Fix $p\in\{1,\ldots,P\}$ arbitrarily and let $k_1$ be the smallest $k\geq\underline k$ satisfying \cref{ineq:proof_local27}.
    Then, there exists a constant $M\geq0$ which is independent of $\overline k$ such that \cref{ineq:error_general0} holds with $\gamma$ defined above for all $k_1\leq k\leq\overline k$.
\end{theorem}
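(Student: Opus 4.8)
The plan is to verify that the hypotheses of \cref{thm:error_general} are met, so that the error bound \cref{ineq:error_general0} follows directly. The key observation is that \cref{thm:error_general} requires \cref{ass:trajectory_local} in full, and \cref{ass:trajectory_local1,ass:trajectory_local2,ass:initial} are already assumed here; so the only missing piece is \cref{ass:correction_local}, i.e., that the correction step contracts the error by a factor $\gamma = \rho\theta_2^{C/2}$ whenever the prediction satisfies $\|\hat{\bm x}_k - \bm x^\ast(t_k)\| \le r$. First I would invoke the local P\L~hypothesis: since $f(\cdot;t)$ is $\mu$-P\L~on the ball $\{\bm x : \|\bm x - \bm x^\ast(t)\| \le (1+\rho)r\}$, and the correction step starts at $\hat{\bm x}_k$ with $\|\hat{\bm x}_k - \bm x^\ast(t_k)\| \le r$, I need to show the GD iterates $\bm x_k^0,\dots,\bm x_k^C$ stay inside that ball and that the standard linear-convergence estimate for GD under P\L~(\citep[Theorem 4]{polyak1963gradient}, combined with the descent inequality \cref{ineq:descent}) yields $\|\bm x_k^C - \bm x^\ast(t_k)\| \le \rho\theta_2^{C/2}\|\hat{\bm x}_k - \bm x^\ast(t_k)\|$. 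The factor $\rho$ arises because P\L~controls function values, not iterates directly, so one converts $f(\bm x_k^c;t_k) - f^\ast_k \le \theta_2^c(f(\hat{\bm x}_k;t_k)-f^\ast_k)$ into an iterate bound using $L_{2,0}$-smoothness on one side (to bound $f(\hat{\bm x}_k;t_k)-f^\ast_k \le \frac{L_{2,0}}{2}e_k^2$) and a quadratic-growth-type lower bound (a consequence of P\L, see \citep{karimi2016linear}) on the other (to lower-bound $f(\bm x_k^C;t_k)-f^\ast_k \ge \frac{\mu}{2}\|\bm x_k^C - \bm x^\ast(t_k)\|^2$, up to the subtlety that P\L~gives this only against $\inf$ over the ball). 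The resulting ratio is exactly $\sqrt{L_{2,0}/\mu}\cdot\theta_2^{C/2}$ times a geometric-series correction $1/(1-\sqrt{\theta_2})$, which matches the stated $\rho\theta_2^{C/2}$.

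Second, I would check the containment argument carefully: the GD trajectory must not leave the P\L~ball of radius $(1+\rho)r$. This follows by a telescoping/monotonicity argument — each GD step decreases $f(\cdot;t_k)$ by \cref{ineq:descent}, so every iterate $\bm x_k^c$ has function value at most $f(\hat{\bm x}_k;t_k)$, and the quadratic growth bound then forces $\|\bm x_k^c - \bm x^\ast(t_k)\| \le \rho\,\|\hat{\bm x}_k - \bm x^\ast(t_k)\| \cdot(\text{something}) \le (1+\rho)r$ — provided the intermediate iterates are themselves in the ball so that the bound applies, which is handled by an induction on $c$. This is the standard ``the iterates stay in the region where the assumption holds'' bootstrapping, and the radius $(1+\rho)r$ in the hypothesis is chosen precisely to make it go through.

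Third, once \cref{ass:correction_local} is established — with the constraint $\gamma < 1/(2^P-1)$ and the second inequality of \cref{ineq:condition2} guaranteed by taking $C$ large enough, as assumed in the theorem statement — all of \cref{ass:trajectory_local} holds on $[t_{\underline k},t_{\overline k}]$. Then \cref{thm:error_general} applies verbatim with this $\gamma$, $k_1$ as defined, and any fixed $p \in \{1,\dots,P\}$, and \cref{ineq:error_general0} is exactly its conclusion. I would note the indexing discrepancy (the theorem statement here says ``for all $k_1 \le k \le \overline k$'' whereas \cref{thm:error_general} gives it for $k_1 + P \le k \le \overline k$) and absorb the finitely many rounds $k_1 \le k < k_1+P$ into the constant $M$, which is harmless since $M$ is allowed to depend on everything except $\overline k$.

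\textbf{Main obstacle.} The delicate step is the conversion from the P\L~function-value contraction to an iterate-norm contraction with the precise constant $\rho$, together with confirming that the GD iterates never exit the ball on which P\L~is assumed. P\L~alone does not give uniqueness of minimizers or a clean quadratic lower bound against the specific point $\bm x^\ast(t)$ (only against the infimum over the set), so one has to be careful that $\bm x^\ast(t)$ — which is a stationary point by \cref{ass:trajectory_local1}, hence a global minimizer on the ball under P\L — actually realizes that infimum, and that the quadratic-growth inequality \citep[Theorem 2]{karimi2016linear} can be localized to the ball. Getting the geometry of these nested balls (radius $r$ for the prediction, $(1+\rho)r$ for the P\L~assumption, and the trajectory of iterates in between) to close up consistently is where the real work lies; the rest is bookkeeping on top of \cref{thm:error_general}.
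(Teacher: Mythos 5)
Your outer frame is the same as the paper's: the whole content of the theorem is to verify \cref{ass:correction_local} with $\gamma=\rho\theta_2^{C/2}$ (the other parts of \cref{ass:trajectory_local} being assumed) and then invoke \cref{thm:error_general}. However, the mechanism you propose for the crucial step does not work and does not produce the stated constant. Converting the P\L\ function-value contraction into an iterate bound via quadratic growth plus smoothness yields, at best, $\dist(\bm x_k^C,\mathcal X^\ast)\leq\sqrt{L_{2,0}/\mu}\,\theta_2^{C/2}e_k$ --- a bound on the distance to the \emph{set} of minimizers, not to the specific point $\bm x^\ast(t_k)$. You flag this yourself as the ``main obstacle,'' but you do not resolve it, and it is not a bookkeeping issue: with nonunique minimizers the GD iterates can drift along the minimizer set away from $\bm x^\ast(t_k)$, so a distance-to-set bound neither verifies \cref{ass:correction_local} (which is anchored at $\bm x^\ast(t_k)$) nor keeps the iterates inside the ball of radius $(1+\rho)r$ centered at $\bm x^\ast(t_k)$ on which the P\L\ condition is assumed, so even the function-value contraction you start from cannot be iterated. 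Moreover, the constant you claim ``matches'' does not: quadratic growth gives the factor $\sqrt{L_{2,0}/\mu}$ with no geometric-series correction, whereas $\rho=\tfrac{1}{1-\sqrt{\theta_2}}\sqrt{\tfrac{\alpha L_{2,0}}{2-\alpha L_{2,0}}}$ contains neither $\sqrt{L_{2,0}/\mu}$ nor arises from that route; appending a factor $1/(1-\sqrt{\theta_2})$ to the quadratic-growth bound conflates two different arguments.

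The paper's proof controls $\|\bm x_k^c-\bm x^\ast(t_k)\|$ directly by bounding the \emph{per-step displacements}: from the standard P\L\ descent analysis one gets $\|\bm x_k^{i+1}-\bm x_k^i\|\leq\sqrt{\tfrac{\alpha L_{2,0}}{2-\alpha L_{2,0}}}\,\theta_2^{i/2}e_k$ (\cref{ineq:proof_pl2}, using \cref{ineq:descent_lemma} at $\bm x^\ast(t_k)$ to bound the initial gap by $\tfrac{L_{2,0}}{2}e_k^2$). An induction on $c$ with the triangle inequality $\|\bm x_k^{c+1}-\bm x^\ast(t_k)\|\leq e_k+\sum_{i\le c}\|\bm x_k^{i+1}-\bm x_k^i\|\leq(1+\rho)r$ keeps every iterate in the P\L\ ball --- this is exactly why the radius $(1+\rho)r$ and the geometric sum $1/(1-\sqrt{\theta_2})$ appear. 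Then the displacement bounds make the GD sequence Cauchy; identifying its limit with $\bm x^\ast(t_k)$ (following Polyak's argument) and summing the tail from $i=C$ gives $\|\bm x_k-\bm x^\ast(t_k)\|\leq\rho\theta_2^{C/2}e_k=\gamma e_k$, i.e., \cref{ass:correction_local}, after which \cref{thm:error_general} yields \cref{ineq:error_general0}. So the missing idea in your proposal is precisely this displacement-summation (Cauchy-tail) argument anchored at $\bm x^\ast(t_k)$; without it, the step from P\L\ convergence to the contraction with constant $\rho\theta_2^{C/2}$ is not established. Your remark about absorbing the rounds $k_1\leq k<k_1+P$ into $M$ is a reasonable way to reconcile the index range, but it is peripheral to the main gap.
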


\begin{proof}
    
    First, we prove that if $e_k\leq r$, the inequality $\|\bm x_k^c-\bm x^\ast(t_k)\|\leq (1+\rho)r$ holds for all $0\leq c\leq C$ by induction.
    The case $c=0$ follows from the assumption $e_k\leq r$.
    Assume $\|\bm x_k^i-\bm x^\ast(t_k)\|\leq (1+\rho)r$ for all $i\leq c$.
    From the standard argument for GD under the P\L~condition \citep[p.~868]{polyak1963gradient}, we have
    \begin{align}
        \|\bm x_k^{i+1}-\bm x_k^i\|^2
        &\leq\frac{2\alpha}{2-\alpha L_{2,0}}\theta_2^i\prn*{f(\bm x_k^0;t_k)-f(\bm x_k^\ast;t_k)}
        \leq\frac{\alpha L_{2,0}}{2-\alpha L_{2,0}}\theta_2^ie_k^2\label{ineq:proof_pl2}
    \end{align}
    for all $i\leq c$, where the second inequality follows from \cref{ineq:descent_lemma} with $(\bm x,\bm y,t)=(\bm x_k^\ast,\bm x_k^0,t_k)$.
    The triangle inequality gives
    \begin{align}
        \|\bm x_{k}^{c+1}-\bm x^\ast(t_k)\|
        \leq e_k+\sum_{i=0}^{c}\|\bm x_{k}^{i+1}-\bm x_k^i\|
        \leq e_k+\sum_{i=0}^{c}\sqrt{\frac{\alpha L_{2,0}}{2-\alpha L_{2,0}}\theta_2^i}e_k
        &\leq (1+\rho)r,
    \end{align}
    where the second inequality follows from \cref{ineq:proof_pl2} and the last inequality from $e_k\leq r$.
    Thus, if $e_k\leq r$, the inequality $\|\bm x_k^c-\bm x^\ast(t_k)\|\leq (1+\rho)r$ holds for all $0\leq c\leq C$.

    The results established above allow us to follow an argument similar to that of \citep[Theorem 4]{polyak1963gradient}, thereby showing that $\bm x_k^C$ converges to $\bm x^\ast(t_k)$ as $C\to\infty$.
    In addition, \cref{ineq:proof_pl2} holds for all $0\leq i\leq C$.
    Using these facts and the triangle inequality yields
    \begin{align}
        \|\bm x_k-\bm x^\ast(t_k)\|
        &=\lim_{C'\to\infty}\|\bm x_k^C-\bm x_k^{C'}\|
        \leq\sum_{i=C}^\infty\|\bm x_{k}^{i+1}-\bm x_k^i\|\\
        &\leq\sum_{i=C}^{\infty}\sqrt{\frac{\alpha L_{2,0}}{2-\alpha L_{2,0}}\theta_2^i}e_k
        =\rho\theta_2^{C/2}e_k
        =\gamma e_k,
    \end{align}
    which implies that \cref{ass:correction_local} holds.

    Applying \cref{thm:error_general} completes the proof.
\end{proof}

When the step size is $\alpha=1/L_{2,0}$ and \cref{ineq:condition2} is reduced to $\gamma<(2^P-1)^{-1}$, the condition for $C$ becomes
\[
    C>\frac{2\log\frac{2^P-1}{1-\sqrt{1-\kappa^{-1}}}}{-\log(1-\kappa^{-1})},
\]
where $\kappa\coloneqq L_{2,0}/\mu\geq1$ is the condition number.
Since $-\log(1-\kappa^{-1})\geq\kappa^{-1}$ and $1-\sqrt{1-\kappa^{-1}}\geq(2\kappa)^{-1}$ hold, we get a sufficient condition: $C\geq2\kappa\log(2\kappa(2^P-1))$.
Thus, we can take $C=\lceil2\kappa\log(2\kappa(2^P-1))\rceil$, and the computational cost per iteration becomes $O(\kappa\log\kappa)$, as shown in \cref{tab:specific}.

\section{Specific Tracking Error Analysis without \cref{ass:trajectory_local}}\label{sec:specific2}

This section derives tracking error results for the case where 
\cref{ass:smooth} holds but the assumptions in \cref{thm:error_general} does not necessarily hold. 
Note that theorems in this section also guarantee the asymptotic tracking error of TVGD because \cref{alg:hospca} with $v=0$ or $P=1$ corresponds to TVGD under \cref{ass:smooth}.

\subsection{Case 3: P\L~Functions without \cref{ass:trajectory_local}}\label{sec:pl_global}

We consider the case where the P\L~condition holds on $\R^n$ but \cref{ass:trajectory_local} does not hold.
In this setting, the definition of the tracking error needs to be modified to $\dist(\hat{\bm x}_k,\mathcal X^\ast(t_k))$, where $\mathcal X^\ast(t)\coloneqq\argmin_{\bm x\in\R^n}f(\bm x;t)$, and we can no longer obtain an $O(h^{p})$ tracking error.
We also give an asymptotic error bound on $f(\hat{\bm x}_k;t_k)-f^\ast(t_k)$ as a corollary, where $f^\ast(t)\coloneqq\min_{\bm x\in\R^n}f(\bm x;t)$.

Main assumptions in this subsection are summarized as follows:

\begin{assumption}\label{ass:pl_global}
    Let $\mu>0$ and $L_{1,1}>0$ be constants.
    \begin{enuminasm}
        \item The function $f(\cdot;t)$ is a $\mu$-P\L~function on $\R^n$ for all $t\geq0$.\label{ass:pl_global1}
        \item $\mathcal X^\ast(t)\neq\emptyset$ for all $t\geq0$.\label{ass:pl_global2}
        \item $\|\nabla_{\bm x}f(\bm x;s)-\nabla_{\bm x}f(\bm x;t)\|\leq L_{1,1}|s-t|$ for all $\bm x\in\R^n$ and $s,t\geq0$.\label{ass:pl_global3}
    \end{enuminasm}
\end{assumption}

This assumption guarantees that the set of optima is Lipschitz continuous in the following sense. 

\begin{lemma}\label{lem:lipschitz}
    Suppose that  \cref{ass:smooth,ass:pl_global} hold.
    Then the following holds for all $s,t\geq0$:
    \begin{gather}
        \disth\prn*{\mathcal X^\ast(s),\mathcal X^\ast(t)}\leq \frac{L_{1,1}}\mu|s-t|,\label{ineq:lipschitz}
    \end{gather}
    where $\disth(\cdot,\cdot)$ is the Hausdorff distance, defined by 
    \[
        \disth(\mathcal X,\mathcal Y)\coloneqq\max\left\{\sup_{\bm x\in\mathcal X}\dist(\bm x,\mathcal Y),\ \sup_{\bm y\in\mathcal Y}\dist(\bm y,\mathcal X)\right\}.
    \]
\end{lemma}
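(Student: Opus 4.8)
The plan is to prove the two directions of the Hausdorff distance symmetrically, so it suffices to show that for every $\bm x^\ast\in\mathcal X^\ast(s)$ we have $\dist(\bm x^\ast,\mathcal X^\ast(t))\le \frac{L_{1,1}}{\mu}|s-t|$, and then swap the roles of $s$ and $t$. Fix such an $\bm x^\ast$, so $\nabla_{\bm x}f(\bm x^\ast;s)=\bm0$ (a minimizer of a P\L\ function is a stationary point). The key idea is to run gradient descent on $f(\cdot;t)$ starting from $\bm x^\ast$: by \cref{ass:smooth1} this is an $L_{2,0}$-smooth function, and by \cref{ass:pl_global1,ass:pl_global2} it is $\mu$-P\L\ on $\R^n$ with a nonempty minimizer set, so GD with step size $\alpha\in(0,2/L_{2,0})$ converges to some point of $\mathcal X^\ast(t)$ (this is the standard argument underlying \citep[Theorem~4]{polyak1963gradient}, already invoked in the proof of \cref{cor:pl_local}). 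Summing the step lengths along this GD trajectory bounds $\dist(\bm x^\ast,\mathcal X^\ast(t))$ by a geometric series in terms of the initial optimality gap $f(\bm x^\ast;t)-f^\ast(t)$.

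Concretely, I would first bound that initial gap: since $\nabla_{\bm x}f(\bm x^\ast;s)=\bm0$, \cref{ass:pl_global3} gives $\|\nabla_{\bm x}f(\bm x^\ast;t)\|=\|\nabla_{\bm x}f(\bm x^\ast;t)-\nabla_{\bm x}f(\bm x^\ast;s)\|\le L_{1,1}|s-t|$, and then the P\L\ inequality on $\R^n$ yields
\[
    f(\bm x^\ast;t)-f^\ast(t)\le\frac{1}{2\mu}\|\nabla_{\bm x}f(\bm x^\ast;t)\|^2\le\frac{L_{1,1}^2}{2\mu}|s-t|^2 .
\]
Next, exactly as in \cref{ineq:proof_pl2} of the proof of \cref{cor:pl_local} (with $\theta_2=1-\alpha\mu(2-\alpha L_{2,0})$), the GD iterates $\bm y^0=\bm x^\ast$, $\bm y^{i+1}=\bm y^i-\alpha\nabla_{\bm x}f(\bm y^i;t)$ satisfy $\|\bm y^{i+1}-\bm y^i\|^2\le\frac{2\alpha}{2-\alpha L_{2,0}}\theta_2^{\,i}(f(\bm x^\ast;t)-f^\ast(t))$, so by the triangle inequality and summing the geometric series,
\[
    \dist(\bm x^\ast,\mathcal X^\ast(t))\le\sum_{i=0}^\infty\|\bm y^{i+1}-\bm y^i\|\le\frac{1}{1-\sqrt{\theta_2}}\sqrt{\frac{2\alpha}{2-\alpha L_{2,0}}\bigl(f(\bm x^\ast;t)-f^\ast(t)\bigr)} .
\]
Substituting the gap bound gives a bound of the form $c(\alpha,\mu,L_{2,0})\,L_{1,1}|s-t|$, where $c=\frac{1}{1-\sqrt{\theta_2}}\sqrt{\frac{\alpha}{\mu(2-\alpha L_{2,0})}}$; one checks (optimizing over $\alpha$, or simply taking the limit $\alpha\to0$, or a direct algebraic simplification) that the sharp constant is $1/\mu$, matching the claimed inequality \cref{ineq:lipschitz}. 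In fact, the cleanest route avoiding the $\alpha$-dependence is to pass to the gradient-flow limit $\dot{\bm y}=-\nabla_{\bm x}f(\bm y;t)$, along which $\frac{d}{ds}\sqrt{f(\bm y(s);t)-f^\ast(t)}\le -\sqrt{\mu/2}\,$ by the P\L\ inequality, and integrating the arclength $\int_0^\infty\|\dot{\bm y}(s)\|\,ds$ against the energy decay yields directly $\dist(\bm x^\ast,\mathcal X^\ast(t))\le\frac{1}{\mu}\|\nabla_{\bm x}f(\bm x^\ast;t)\|\le\frac{L_{1,1}}{\mu}|s-t|$.

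The main obstacle is the convergence-to-$\mathcal X^\ast(t)$ step: I must be sure that the GD (or gradient-flow) trajectory from $\bm x^\ast$ actually has a limit point and that this limit lies in $\mathcal X^\ast(t)$, rather than merely driving the objective to $f^\ast(t)$. Under P\L\ on all of $\R^n$ plus $L_{2,0}$-smoothness this is standard—the summable step lengths force the iterates to be Cauchy, hence convergent, and the limit is stationary with optimal value, so it is a global minimizer—but it should be stated carefully, perhaps by citing the argument already used for \cref{cor:pl_local}. A secondary bookkeeping point is to confirm the constant simplifies to exactly $L_{1,1}/\mu$ and not merely $O(L_{1,1}/\mu)$; the gradient-flow argument makes this transparent and is the version I would write up.
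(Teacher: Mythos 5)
Your gradient-flow write-up is correct, and at its core it proves the same key inequality the paper uses, namely $\dist(\bm x,\mathcal X^\ast(t))\leq\frac1\mu\|\nabla_{\bm x}f(\bm x;t)\|$, which you then combine with $\|\nabla_{\bm x}f(\bm x^\ast;t)\|=\|\nabla_{\bm x}f(\bm x^\ast;t)-\nabla_{\bm x}f(\bm x^\ast;s)\|\leq L_{1,1}|s-t|$ (from \cref{ass:pl_global3} and stationarity on $\mathcal X^\ast(s)$) and symmetry in $s,t$. The difference is how that key inequality is obtained: the paper simply cites the known fact that a $\mu$-P\L{} function satisfies the error-bound condition $\|\nabla_{\bm x}f(\bm x;t)\|\geq\mu\,\dist(\bm x,\mathcal X^\ast(t))$ (\citep[Theorem~2]{karimi2016linear}), which makes the whole proof three lines and requires no smoothness or algorithmic construction; you re-derive this implication from scratch by integrating arclength against the decay of $\sqrt{f(\cdot;t)-f^\ast(t)}$ along the gradient flow. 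Your route is more self-contained and makes the sharp constant transparent, at the price of extra infrastructure (global existence of the flow via $L_{2,0}$-smoothness, finite length $\Rightarrow$ convergence, and the limit lying in $\mathcal X^\ast(t)$), which you do handle.

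One caveat: your first, discrete-GD version does not deliver the stated constant, and the parenthetical claim that "optimizing over $\alpha$ or taking $\alpha\to0$" yields $1/\mu$ is false. Bounding each step length by the geometric decay of the initial gap, as in \cref{ineq:proof_pl2}, gives the factor $\frac{1}{1-\sqrt{\theta_2}}\sqrt{\frac{\alpha}{\mu(2-\alpha L_{2,0})}}$, which behaves like $\frac{1}{\mu\sqrt{2\mu\alpha}}$ as $\alpha\to0$ (it diverges) and like $\frac{2\sqrt{\kappa}}{\mu}$ at $\alpha=1/L_{2,0}$; so that argument only proves Lipschitz continuity with a constant inflated by roughly $\sqrt{\kappa}$. (A discrete argument can recover $L_{1,1}/\mu$ in the limit, but only via the telescoping of $\sqrt{f-f^\ast}$ across iterations, i.e.\ the discrete analogue of your flow computation, not via the geometric series of per-step bounds.) Since you explicitly commit to the gradient-flow version for the write-up, the proof stands; just drop or correct the claim about the discrete constant, and fix the small notational points there: the displayed inequality $\frac{d}{ds}\sqrt{f(\bm y(s);t)-f^\ast(t)}\leq-\sqrt{\mu/2}$ is only correct if $s$ denotes arclength (otherwise the right-hand side needs a factor $\|\dot{\bm y}(s)\|$), and that parameter should not reuse the letter $s$ already naming the time instant in $\mathcal X^\ast(s)$.
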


\begin{proof}
    Note that $\mu$-P\L~functions satisfy the error bound condition $\|\nabla_{\bm x} f(\bm x;t)\|\geq\mu\dist(\bm x,\mathcal X^\ast(t))$~\citep[Theorem 2]{karimi2016linear}.
    For any $s,t\geq0$, we have
    \begin{align}
        \sup_{\bm x\in \mathcal X^\ast(s)}\dist\prn*{\bm x,\mathcal X^\ast(t)}
        &\leq\sup_{\bm x\in \mathcal X^\ast(s)}\frac1\mu\|\nabla_{\bm x}f(\bm x;t)\|\\
        &\leq\sup_{\bm x\in \mathcal X^\ast(s)}\frac1\mu\prn[\big]{\|\nabla_{\bm x}f(\bm x;t)-\nabla_{\bm x}f(\bm x;s)\|+\|\nabla_{\bm x}f(\bm x;s)\|}\\
        &\leq\frac{L_{1,1}}\mu|s-t|,
    \end{align}
    where the first inequality follows from the error bound condition and the last from \cref{ass:pl_global3} and $\nabla_{\bm x}f(\bm x;s)=\bm 0$ for all $\bm x\in \mathcal X^\ast(s)$.
    In the same way, $\sup_{\bm y\in \mathcal X^\ast(t)}\dist(\bm y,\mathcal X^\ast(s))\leq(L_{1,1}/\mu)|s-t|$ follows.
    Therefore, we obtain \cref{ineq:lipschitz}.
\end{proof}

\cref{lem:lipschitz} yields the following recursive relationship for the tracking error.

\begin{lemma}\label{lem:recursive2}
    Suppose that  \cref{ass:smooth,ass:pl_global} hold.
    Then, the following holds for all $k\geq2$:
    \begin{gather}
        \dist\prn*{\hat{\bm x}_k,\mathcal X^\ast(t_{k})}\leq\sqrt{\kappa\theta_2^{C}}\dist\prn*{\hat{\bm x}_{k-1},\mathcal X^\ast(t_{k-1})}+\left(v+\frac{L_{1,1}}\mu\right) h,\label{ineq:recursive2}
    \end{gather}
    where $\kappa\coloneqq L_{2,0}/\mu$ and $\theta_2\coloneqq1-\alpha\mu(2-\alpha L_{2,0})<1$.
\end{lemma}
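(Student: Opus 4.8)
The plan is to bound the distance $\dist(\hat{\bm x}_k,\mathcal X^\ast(t_k))$ by splitting it into a \emph{correction} part and a \emph{prediction} part, analogously to the proof of \cref{lem:error_general}. The key triangle-inequality decomposition should be
\[
    \dist(\hat{\bm x}_k,\mathcal X^\ast(t_k))
    \leq \|\hat{\bm x}_k-\bm x_{k-1}\| + \dist(\bm x_{k-1},\mathcal X^\ast(t_{k-1})) + \disth(\mathcal X^\ast(t_{k-1}),\mathcal X^\ast(t_k)),
\]
where the first term is controlled by the acceptance condition \cref{eq:acceptance} (it is at most $vh$, and this also covers the unpredicted case $p=1$, where $\hat{\bm x}_k=\bm x_{k-1}$), and the third term is at most $(L_{1,1}/\mu)h$ by \cref{lem:lipschitz} with $|s-t|=h$. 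So the only real work is to bound $\dist(\bm x_{k-1},\mathcal X^\ast(t_{k-1}))$ by $\sqrt{\kappa\theta_2^{C}}\,\dist(\hat{\bm x}_{k-1},\mathcal X^\ast(t_{k-1}))$, i.e.\ to show that $C$ steps of GD contract the distance to the optimal set by a factor $\sqrt{\kappa\theta_2^{C}}$.

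**For the correction estimate**, I would use the standard P\L\ analysis already invoked in the excerpt. Under \cref{ass:smooth}, the descent inequality \cref{ineq:descent} together with the P\L\ condition on $\R^n$ (\cref{ass:pl_global1}) gives the linear decrease of the function-value gap, $f(\bm x_{k}^{C};t_k)-f^\ast(t_k)\leq \theta_2^{C}\prn*{f(\hat{\bm x}_k;t_k)-f^\ast(t_k)}$, with $\theta_2=1-\alpha\mu(2-\alpha L_{2,0})$. Then I would convert both ends into distances: by the error bound / quadratic growth consequence of P\L, $\frac\mu2\dist(\bm x,\mathcal X^\ast(t))^2\leq f(\bm x;t)-f^\ast(t)$, and by $L_{2,0}$-smoothness \cref{ineq:descent_lemma} applied at a projection point, $f(\bm x;t)-f^\ast(t)\leq \frac{L_{2,0}}2\dist(\bm x,\mathcal X^\ast(t))^2$. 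Chaining these three inequalities yields
\[
    \tfrac\mu2\dist(\bm x_{k}^C,\mathcal X^\ast(t_k))^2
    \leq \theta_2^{C}\cdot\tfrac{L_{2,0}}2\dist(\hat{\bm x}_k,\mathcal X^\ast(t_k))^2,
\]
hence $\dist(\bm x_k,\mathcal X^\ast(t_k))\leq\sqrt{\kappa\theta_2^{C}}\,\dist(\hat{\bm x}_k,\mathcal X^\ast(t_k))$ with $\kappa=L_{2,0}/\mu$. Shifting the index from $k$ to $k-1$ gives exactly the bound needed for the middle term of the decomposition.

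**Assembling the pieces**, I would plug the three bounds — $vh$, $\sqrt{\kappa\theta_2^{C}}\dist(\hat{\bm x}_{k-1},\mathcal X^\ast(t_{k-1}))$, and $(L_{1,1}/\mu)h$ — into the triangle inequality to obtain \cref{ineq:recursive2} for all $k\geq2$. The $k\geq2$ restriction is just what is needed for $\bm x_{k-1}$ to have been produced by a correction step (Line~\ref{line:correction1}), so the contraction estimate applies; for $k=1$ the algorithm has only the initialization $\bm x_0=\cdots=\bm x_{-P+1}$, and no prior correction has run, so the statement sensibly starts at $k\geq2$.

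**The main obstacle** I anticipate is handling the optimal \emph{set} $\mathcal X^\ast(t)$ rather than a single optimizer: one has to be careful that the projection point used in the smoothness step \cref{ineq:descent_lemma} lies in $\R^n$ (it does, since the P\L\ condition is global here, so no feasibility ball restriction is in play, unlike in \cref{cor:pl_local}), and that $\disth$ in \cref{lem:lipschitz} genuinely dominates the one-sided term $\dist(\bm x^\ast(t_{k-1})\text{-projection},\mathcal X^\ast(t_k))$ appearing after the triangle inequality. Both are routine once one writes $\dist(\bm x_{k-1},\mathcal X^\ast(t_{k-1}))$ as a distance to a specific nearest point and then moves that point to $\mathcal X^\ast(t_k)$ via \cref{ineq:lipschitz}. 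Everything else is bookkeeping with the already-established descent and error-bound inequalities.
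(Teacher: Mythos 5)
Your proposal is correct and matches the paper's own proof essentially step for step: the same triangle-inequality decomposition into acceptance-condition, correction-contraction, and Hausdorff-drift terms, with the middle term handled exactly as in the paper via quadratic growth, linear convergence of GD under the P\L{} condition, and the smoothness upper bound \cref{ineq:descent_lemma}, and the drift bounded by \cref{lem:lipschitz}. No gaps to report.
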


\begin{proof}
    The triangle inequality for $\dist$ and $\disth$ (see \cref{sec:proof_triangle}) gives
    \begin{align}
        \dist\prn*{\hat{\bm x}_k,\mathcal X^\ast(t_{k})}
        &\leq \dist\prn*{\hat{\bm x}_k,\mathcal X^\ast(t_{k-1})}
        +\disth\prn*{\mathcal X^\ast(t_{k-1}),\mathcal X^\ast(t_{k})}\\
        &\leq \|\hat{\bm x}_k-\bm x_{k-1}\|+\dist\prn*{\bm x_{k-1},\mathcal X^\ast(t_{k-1})}
        +\disth\prn*{\mathcal X^\ast(t_{k-1}),\mathcal X^\ast(t_{k})}.\label{ineq:proof_gd1}
    \end{align}

    We will bound each term of \cref{ineq:proof_gd1} in what follows.
    The first term can be bounded by $vh$ from the acceptance condition~\cref{eq:acceptance}. 
    The second term can be bounded as follows:
    \begin{align}
        \dist\prn*{\bm x_{k-1},\mathcal X^\ast(t_{k-1})}^2
        &\leq\frac2\mu\prn[\big]{f(\bm x_{k-1};t_{k-1})-f^\ast(t_{k-1})}\nonumber\\
        &\leq\frac2\mu \theta_2^C\prn[\big]{f(\hat{\bm x}_{k-1};t_{k-1})-f^\ast(t_{k-1})}\label{ineq:proof_gd2}\\
        &\leq \frac{L_{2,0}}\mu\theta_2^C\dist\prn*{\hat{\bm x}_{k-1},\mathcal X^\ast(t_{k-1})}^2.\label{ineq:proof_gd3}
    \end{align}
    where the first inequality follows from the quadratic growth property of P\L~functions, i.e., $f(\bm x;t)-f^\ast(t)\geq(\mu/2)\dist(\bm x,\mathcal X^\ast(t))^2$ for all $\bm x\in\R^n$ and $t\geq0$ (see, e.g., \citep[Theorem 2]{karimi2016linear} for details), the second follows from the linear convergence of GD, and the last follows from the inequality~\cref{ineq:descent_lemma}. 
    \cref{lem:lipschitz} yields the following upper bound on the third term of \cref{ineq:proof_gd1}:
    \begin{gather}
        \disth\prn*{\mathcal X^\ast(t_{k-1}),\mathcal X^\ast(t_{k})}\leq\frac{L_{1,1}}\mu h.\label{ineq:proof_gd5}
    \end{gather}
    Combining the inequalities \cref{ineq:proof_gd1,ineq:proof_gd3,ineq:proof_gd5} completes the proof.
\end{proof}

By using \cref{lem:recursive2}, we get a tracking error bound for general P\L~functions.

\begin{theorem}[Tracking Error of SHARP, P\L]\label{thm:error_pl}
    Suppose that  \cref{ass:smooth,ass:pl_global} hold, and set 
    \begin{gather}
        C>\frac{\log\kappa}{\log(\theta_2^{-1})},\quad\text{where}\quad\kappa\coloneqq \frac{L_{2,0}}\mu\quad\text{and}\quad\theta_2\coloneqq1-\alpha\mu(2-\alpha L_{2,0})<1.\label{ineq:c_pl}
    \end{gather}
    Then the following holds:
    \begin{gather}
        \limsup_{k\to\infty}\dist\prn*{\hat{\bm x}_k,\mathcal X^\ast(t_{k})}\leq\frac{v+\frac{L_{1,1}}\mu}{1-\sqrt{\kappa \theta_2^{C}}}h=O(h)\label{ineq:error_pl}        
    \end{gather}
    and
    \begin{gather}
        \limsup_{k\to\infty}\prn[\big]{f(\hat{\bm x}_k;t_k)-f^\ast(t_k)}\leq\frac{L_{2,0}}2\left(\frac{v+\frac{L_{1,1}}\mu}{1-\sqrt{\kappa \theta_2^{C}}}\right)^2h^2=O(h^2).\label{ineq:error_pl2}
    \end{gather}
\end{theorem}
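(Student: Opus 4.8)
The plan is to treat \cref{ineq:recursive2} of \cref{lem:recursive2} as a scalar linear recursion for the sequence $a_k \coloneqq \dist(\hat{\bm x}_k, \mathcal X^\ast(t_k))$. Writing $q \coloneqq \sqrt{\kappa\theta_2^C}$ and $b \coloneqq (v + L_{1,1}/\mu)h$, the recursion reads $a_k \leq q\,a_{k-1} + b$ for all $k\geq 2$. The first task is to verify that the choice of $C$ in \cref{ineq:c_pl} makes $q < 1$: indeed $C > \log\kappa / \log(\theta_2^{-1})$ is equivalent to $\theta_2^C < 1/\kappa$, i.e.\ $\kappa\theta_2^C < 1$, hence $q = \sqrt{\kappa\theta_2^C} < 1$. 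With $q\in[0,1)$, unrolling the recursion from some fixed index gives $a_k \leq q^{k-k'}a_{k'} + b\sum_{j=0}^{k-k'-1} q^j \leq q^{k-k'}a_{k'} + b/(1-q)$, and since $q^{k-k'}\to 0$ as $k\to\infty$, we obtain $\limsup_{k\to\infty} a_k \leq b/(1-q)$, which is exactly \cref{ineq:error_pl}. The $O(h)$ claim follows because $b$ is proportional to $h$ and $q$ is a constant independent of $h$.

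For the function-value bound \cref{ineq:error_pl2}, the plan is to convert the distance bound into a function-gap bound via the $L_{2,0}$-smoothness inequality \cref{ineq:descent_lemma}. Applying \cref{ineq:descent_lemma} with $\bm x$ a projection of $\hat{\bm x}_k$ onto $\mathcal X^\ast(t_k)$ (so $\nabla_{\bm x}f(\bm x;t_k) = \bm 0$ and $f(\bm x;t_k) = f^\ast(t_k)$) and $\bm y = \hat{\bm x}_k$, we get
\[
    f(\hat{\bm x}_k;t_k) - f^\ast(t_k) \leq \frac{L_{2,0}}{2}\,\dist(\hat{\bm x}_k,\mathcal X^\ast(t_k))^2 = \frac{L_{2,0}}{2}\,a_k^2.
\]
Taking $\limsup$ on both sides and using $\limsup a_k^2 = (\limsup a_k)^2$ (valid since $a_k\geq 0$ and the sequence is bounded) together with \cref{ineq:error_pl} yields \cref{ineq:error_pl2}, and the $O(h^2)$ rate is immediate since $(b/(1-q))^2$ scales like $h^2$.

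I do not anticipate a genuine obstacle here: the bulk of the work — establishing the Lipschitz continuity of $\mathcal X^\ast(\cdot)$ and the one-step contraction — is already packaged in \cref{lem:lipschitz,lem:recursive2}. The only points requiring a little care are (i) confirming the equivalence between the stated condition on $C$ and $q<1$, which is a one-line manipulation of logarithms; (ii) justifying the interchange of $\limsup$ with squaring, which holds because $\{a_k\}$ is nonnegative and eventually bounded (a consequence of the recursion with $q<1$); and (iii) making sure the projection of $\hat{\bm x}_k$ onto $\mathcal X^\ast(t_k)$ exists so that \cref{ineq:descent_lemma} can be applied at an exact minimizer — this is guaranteed by \cref{ass:pl_global2} together with the fact that $\mathcal X^\ast(t_k)$ is closed (a sublevel-set-type argument, or simply taking a minimizing sequence and using that the infimum of $\|\hat{\bm x}_k - \bm y\|$ over $\mathcal X^\ast(t_k)$ is attained). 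None of these is substantial, so the proof is essentially a clean assembly of the preceding lemmas.
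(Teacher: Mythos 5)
Your proposal is correct and follows essentially the same route as the paper: solve the linear recursion from \cref{lem:recursive2} (noting that \cref{ineq:c_pl} gives $\sqrt{\kappa\theta_2^{C}}<1$) to obtain \cref{ineq:error_pl}, then combine the smoothness inequality \cref{ineq:descent_lemma} at a point of $\mathcal X^\ast(t_k)$ with \cref{ineq:error_pl} to obtain \cref{ineq:error_pl2}. The extra care you take about the projection/attainment is harmless but not needed, since one can bound $f(\hat{\bm x}_k;t_k)-f^\ast(t_k)\leq\frac{L_{2,0}}2\|\hat{\bm x}_k-\bm y\|^2$ for every $\bm y\in\mathcal X^\ast(t_k)$ and then take the infimum over $\bm y$.
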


\begin{proof}
    \cref{lem:recursive2} guarantees the inequality~\cref{ineq:recursive2} for all $k\geq 2$.
    Solving \cref{ineq:recursive2} yields
    \[
        \dist\prn*{\hat{\bm x}_k,\mathcal X^\ast(t_{k})}
        \leq\frac{v+\frac{L_{1,1}}\mu}{1-\sqrt{\kappa \theta_2^{C}}}h+\left(\dist\prn*{\hat{\bm x}_1,\mathcal X^\ast(t_{1})}-\frac{v+\frac{L_{1,1}}\mu}{1-\sqrt{\kappa \theta_2^{C}}}h\right)\left(\sqrt{\kappa \theta_2^{C}}\right)^{k-1}
    \]
    for all $k\geq1$.
    Since \cref{ineq:c_pl} implies $\sqrt{\kappa \theta_2^{C}}<1$, taking the limit $k\to\infty$ gives \cref{ineq:error_pl}.

    The inequalities~\cref{ineq:descent_lemma} and \cref{ineq:error_pl} yield \cref{ineq:error_pl2}.
\end{proof}

The derived tracking error bound is $O(h^2)$, which is better than the $O(h)$ bound for TVGD obtained by \citet[Theorem 3.3]{iwakiri2024prediction}.



\subsection{Case 4: Non-convex Functions}\label{sec:nonconvex}

Finally, we consider general non-convex functions.
We give a bound on $\|\nabla_{\bm x}f(\hat{\bm x}_k;t_k)\|$ as a tracking error because it is hopeless to bound $\dist(\hat{\bm x}_k,\mathcal X^\ast(t_k))$.

The following tracking error bound shows that \cref{alg:hospca} can track stationary points thanks to the acceptance condition, even if the objective function is not strongly convex or P\L.

\begin{theorem}[Tracking Error of SHARP, Non-convex]\label{thm:error_nc}
    Suppose that \cref{ass:smooth} holds, $\inf_{(\bm x,t)\in\R^n\times\R_{\geq0}}f(\bm x;t)>-\infty$, and there exist constants $L_{1,0}, L_{0,1}\geq0$ such that
    \begin{align}
        |f(\bm x;t)-f(\bm y;t)|&\leq L_{1,0}\|\bm x-\bm y\|\quad\text{and}\quad
        |f(\bm x;s)-f(\bm x;t)|\leq L_{0,1}|s-t|\label{ineq:lip_nc}
    \end{align}
    for all $\bm x,\bm y\in\R^n$ and $s,t\geq0$.    
    Then, the following holds for all $K\geq1$:
    \[
        \frac1K\sum_{k=1}^K\|\nabla_{\bm x}f(\hat{\bm x}_k;t_k)\|
        \leq\sqrt{\frac{L_{1,0}v+L_{0,1}}{\alpha-\frac{L_{2,0}}2\alpha^2}h}+O\left(\frac1{\sqrt K}\right).
    \]
    and hence
    \[
        \limsup_{K\to\infty}\frac1K\sum_{k=1}^K\|\nabla_{\bm x}f(\hat{\bm x}_k;t_k)\|\leq\sqrt{\frac{L_{1,0}v+L_{0,1}}{\alpha-\frac{L_{2,0}}2\alpha^2}h}=O(\sqrt h).
    \]
\end{theorem}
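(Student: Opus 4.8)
The plan is to track the function value $f(\hat{\bm x}_k;t_k)$ across rounds and exploit the descent inequality~\cref{ineq:descent} from the correction step, following the classical telescoping argument for nonconvex GD but adapted to the time-varying setting. First I would set up a potential that links consecutive rounds: the correction step at round $k$ produces $\bm x_k$ from $\hat{\bm x}_k$, and summing \cref{ineq:descent} over $c=0,\dots,C-1$ gives
\begin{gather}
    f(\bm x_k;t_k)\leq f(\hat{\bm x}_k;t_k)-\left(\alpha-\frac{L_{2,0}}{2}\alpha^2\right)\sum_{c=0}^{C-1}\|\nabla_{\bm x}f(\bm x_k^c;t_k)\|^2.
\end{gather}
In particular, keeping only the $c=0$ term, $f(\bm x_k;t_k)\leq f(\hat{\bm x}_k;t_k)-(\alpha-\tfrac{L_{2,0}}{2}\alpha^2)\|\nabla_{\bm x}f(\hat{\bm x}_k;t_k)\|^2$. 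Then I would bound the increment incurred when moving from $\bm x_{k-1}$ (end of round $k-1$) to $\hat{\bm x}_k$ (prediction at round $k$) and when the clock advances from $t_{k-1}$ to $t_k$: by the first Lipschitz bound in \cref{ineq:lip_nc} and the acceptance condition~\cref{eq:acceptance}, $f(\hat{\bm x}_k;t_{k-1})-f(\bm x_{k-1};t_{k-1})\leq L_{1,0}\|\hat{\bm x}_k-\bm x_{k-1}\|\leq L_{1,0}vh$, and by the second Lipschitz bound $f(\hat{\bm x}_k;t_k)-f(\hat{\bm x}_k;t_{k-1})\leq L_{0,1}h$. Note the acceptance condition is crucial here: even when a high-order prediction $\hat{\bm x}_k^p$ is taken, its displacement from $\bm x_{k-1}$ is guaranteed at most $vh$, which is exactly what makes the prediction ``cheap'' in function value.

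Combining these, I get a one-step inequality of the form
\begin{gather}
    f(\bm x_k;t_k)\leq f(\bm x_{k-1};t_{k-1})+(L_{1,0}v+L_{0,1})h-\left(\alpha-\frac{L_{2,0}}{2}\alpha^2\right)\|\nabla_{\bm x}f(\hat{\bm x}_k;t_k)\|^2.
\end{gather}
Next I would telescope this from $k=1$ to $K$, using $f(\bm x_0;t_0)-\inf f<\infty$ on one side and $f(\bm x_K;t_K)\geq\inf f$ on the other, to obtain
\begin{gather}
    \left(\alpha-\frac{L_{2,0}}{2}\alpha^2\right)\sum_{k=1}^{K}\|\nabla_{\bm x}f(\hat{\bm x}_k;t_k)\|^2\leq \bigl(f(\bm x_0;t_0)-\textstyle\inf f\bigr)+(L_{1,0}v+L_{0,1})hK.
\end{gather}
Dividing by $K(\alpha-\tfrac{L_{2,0}}{2}\alpha^2)$ bounds the average of squared gradient norms by $\frac{L_{1,0}v+L_{0,1}}{\alpha-L_{2,0}\alpha^2/2}h+O(1/K)$. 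Finally I would pass from the mean of squares to the mean of norms by Jensen/Cauchy--Schwarz, i.e. $\frac1K\sum_k\|\nabla_{\bm x}f(\hat{\bm x}_k;t_k)\|\leq\sqrt{\frac1K\sum_k\|\nabla_{\bm x}f(\hat{\bm x}_k;t_k)\|^2}$, together with $\sqrt{a+b}\leq\sqrt a+\sqrt b$ to separate the $O(\sqrt h)$ term from the $O(1/\sqrt K)$ term, yielding both displayed inequalities.

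The main obstacle I anticipate is not any single estimate but making sure the bookkeeping of which point is evaluated at which time index is airtight — in particular that the step-$0$ descent term involves $\nabla_{\bm x}f(\hat{\bm x}_k;t_k)$ (the quantity we want to bound), and that the ``prediction cost'' term is correctly charged against the acceptance threshold $vh$ rather than an a priori unbounded displacement. A secondary subtlety is the base case $k=1$: since $\bm x_{-P+1}=\dots=\bm x_0$, the first prediction displacement is still controlled by the acceptance condition (or the $p=1$ fallback gives $\hat{\bm x}_1=\bm x_0$), so the telescoping starts cleanly; I would state this explicitly. Everything else is the standard nonconvex-GD telescoping and a one-line Jensen step.
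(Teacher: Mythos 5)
Your proposal is correct and follows essentially the same route as the paper's proof: summing the descent inequality \cref{ineq:descent} and keeping the $c=0$ term, charging the prediction's function-value increase to $(L_{1,0}v+L_{0,1})h$ via \cref{ineq:lip_nc} and the acceptance condition \cref{eq:acceptance}, telescoping with the lower bound on $f$, and finishing with Cauchy--Schwarz. No gaps worth noting.
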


\begin{proof}
    Summing up the inequality~\cref{ineq:descent} for $c=0,\ldots,C-1$ gives
    \begin{align}
        f(\bm x_{k};t_{k})
        &\leq f(\hat{\bm x}_k;t_{k})-\left(\alpha-\frac{L_{2,0}}2\alpha^2\right)\sum_{c=0}^{C-1}\|\nabla_{\bm x}f(\bm x_{k}^c;t_{k})\|^2\nonumber\\
        &\leq f(\hat{\bm x}_k;t_{k})-\left(\alpha-\frac{L_{2,0}}2\alpha^2\right)\|\nabla_{\bm x}f(\hat{\bm x}_k;t_{k})\|^2,\label{ineq:proof_global1}
    \end{align}
    where we neglected $\sum_{c=1}^{C-1}\|\nabla_{\bm x}f(\bm x_{k}^c;t_{k})\|^2\geq0$ in the last inequality.

    The assumption~\cref{ineq:lip_nc} yields
    \begin{align}
        f(\hat{\bm x}_k;t_{k})
        &=f(\bm x_{k-1};t_{k-1})+\prn[\big]{f(\hat{\bm x}_{k};t_{k-1})-f(\bm x_{k-1};t_{k-1})}+\prn[\big]{f(\hat{\bm x}_{k};t_{k})-f(\hat{\bm x}_{k};t_{k-1})}\\
        &\leq f(\bm x_{k-1};t_{k-1})+L_{1,0}\|\hat{\bm x}_k-\bm x_{k-1}\|+L_{0,1}h\nonumber\\
        &\leq f(\bm x_{k-1};t_{k-1})+(L_{1,0}v+L_{0,1})h,\label{ineq:proof_global2}
    \end{align}
    where the last inequality follows from the acceptance condition~\cref{eq:acceptance}.

    By combining \cref{ineq:proof_global1} and \cref{ineq:proof_global2}, we get
    \[
        f(\bm x_{k};t_{k})
        \leq f(\bm x_{k-1};t_{k-1})+(L_{1,0}v+L_{0,1})h-\left(\alpha-\frac{L_{2,0}}2\alpha^2\right)\|\nabla_{\bm x}f(\hat{\bm x}_k;t_{k})\|^2.
    \]
    Summing up this inequality for $k=1,2,\ldots,K$ gives
    \[
        f(\bm x_{K};t_{K})
        \leq f(\bm x_{0};t_{0})+(L_{1,0}v+L_{0,1})Kh-\left(\alpha-\frac{L_{2,0}}2\alpha^2\right)\sum_{k=1}^K\|\nabla_{\bm x}f(\hat{\bm x}_k;t_{k})\|^2.
    \]
    Thus, we obtain
    \begin{align}
        \frac1{K}\sum_{k=1}^K\|\nabla_{\bm x}f(\hat{\bm x}_k;t_{k})\|^2
        &\leq\frac1{\alpha-\frac{L_{2,0}}2\alpha^2}\left((L_{1,0}v+L_{0,1})h+\frac{f(\bm x_{0};t_{0})-f(\bm x_{K};t_{K})}K\right)\\
        &=\frac{L_{1,0}v+L_{0,1}}{\alpha-\frac{L_{2,0}}2\alpha^2}h+O\left(\frac1K\right),
    \end{align}
    where we used $\inf_{(\bm x,t)\in\R^n\times\R_{\geq0}}f(\bm x;t)>-\infty$ in the last equality.
    Using the Cauchy--Schwarz inequality completes the proof.
\end{proof}
\section{Numerical Experiments}

This section evaluates the numerical performance of the proposed algorithm in three problem settings.
We used GD update~\cref{eq:correction} for Line~\ref{line:correction1} of \cref{alg:hospca}.
All the experiments were conducted in Python 3.9.2 on a MacBook Air whose processor is 1.8 GHz dual-core Intel Core i5 and memory is 8GB.


\subsection{Experiment 1: Toy Problem}
To explore the relationship between parameters and accuracy, we consider the following one-dimensional toy problem: 
\[
    \min_{x\in\R}f(x;t)\coloneqq\sin(x-t)+\frac1{10}x^2.
\]
This function is non-convex in $x$ and has multiple isolated trajectories of local optima that disappear at some time.
We fixed the initial point to $x_0=0$.
Since $\nabla_{xx}f(x;t)=-\sin(x-t)+1/5$, the function $f(\cdot;t)$ is $1.2$-smooth and $f(\cdot;t)$ is locally $0.2$-strongly convex around the local optimum satisfying $-\sin(x-t)\geq0$.
Hence, we set $\alpha=1/1.2$ to satisfy \cref{ass:smooth2}.
For the constants $\theta_1$, $L_{1,1}$, and $\sigma_1$ in \cref{thm:sc}, we can derive $\theta_1=5/6$, $L_{1,1}=1$, and $\sigma_1\leq L_{1,1}/\mu=5$ in this setting.

We investigate the behavior of the tracking error of the proposed algorithm when the four parameters $h$, $P$, $C$, and $v$ are varied. 
We conducted two types of experiments: (i) fixing $(C,v)=(30,20)$ while varying $h\in\{1,0.1,0.01\}$ and $P\in\{1,2,4,7\}$, and (ii) fixing $(h,P)=(0.1,7)$ while varying $C\in\{1,5,30\}$ and $v\in\{0.1,1,20\}$.
The values of $C$ and $v$ are derived from the following observations.
To satisfy the first inequality in \cref{ineq:condition2} with $(\gamma,P)=(\theta_1^C,7)$, it suffices to take $C\geq27$.
The condition \cref{ineq:condition4} with $(p,C,\gamma,\sigma_1)=(7,30,\theta_1^C,5)$ can roughly be rewritten into $v\geq16.4$ when we neglect the terms $\gamma^{k-k_0-p+1}$ and $\sigma_{p}h^p$ as sufficiently small.

\begin{figure}[htbp]
  \begin{minipage}[b]{0.49\hsize}
    \centering
    \includegraphics[width=\textwidth]{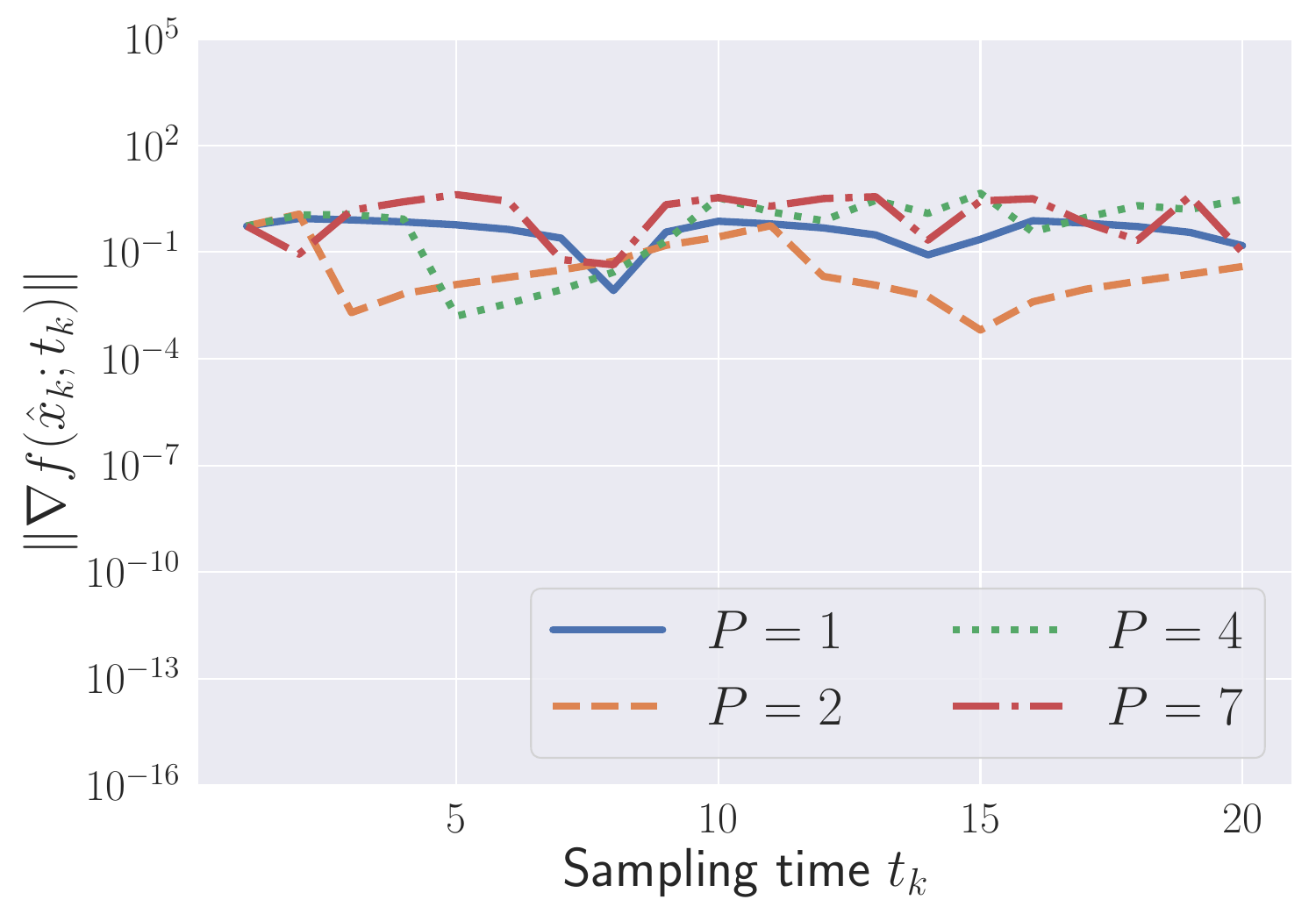}
    \subcaption{$h=1$}\label{fig:experiment11_1}
  \end{minipage}
  \begin{minipage}[b]{0.49\hsize}
    \centering
    \includegraphics[width=\textwidth]{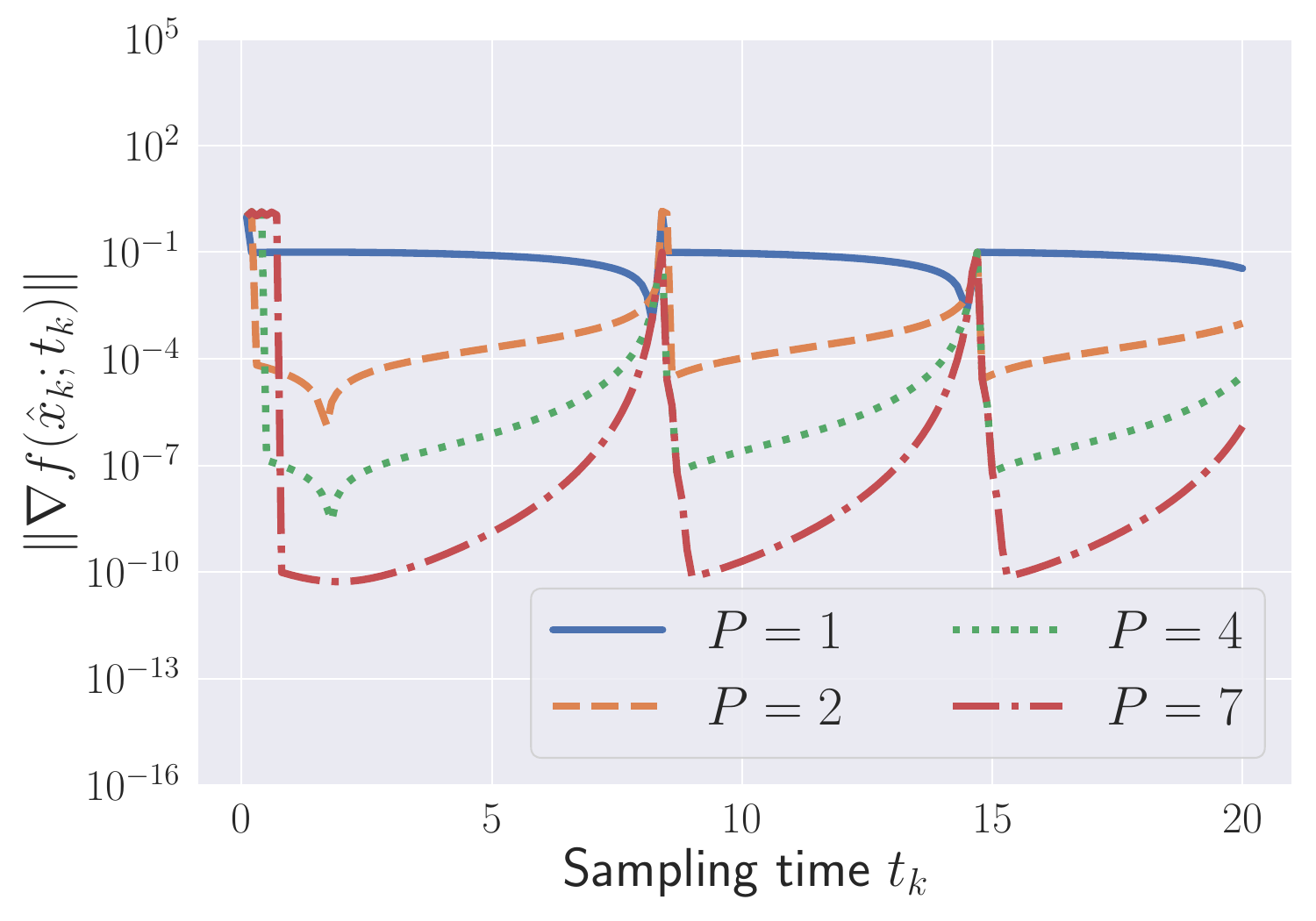}
    \subcaption{$h=0.1$}\label{fig:experiment11_2}
  \end{minipage}\\
  \begin{minipage}[b]{0.49\hsize}
    \centering
    \includegraphics[width=\textwidth]{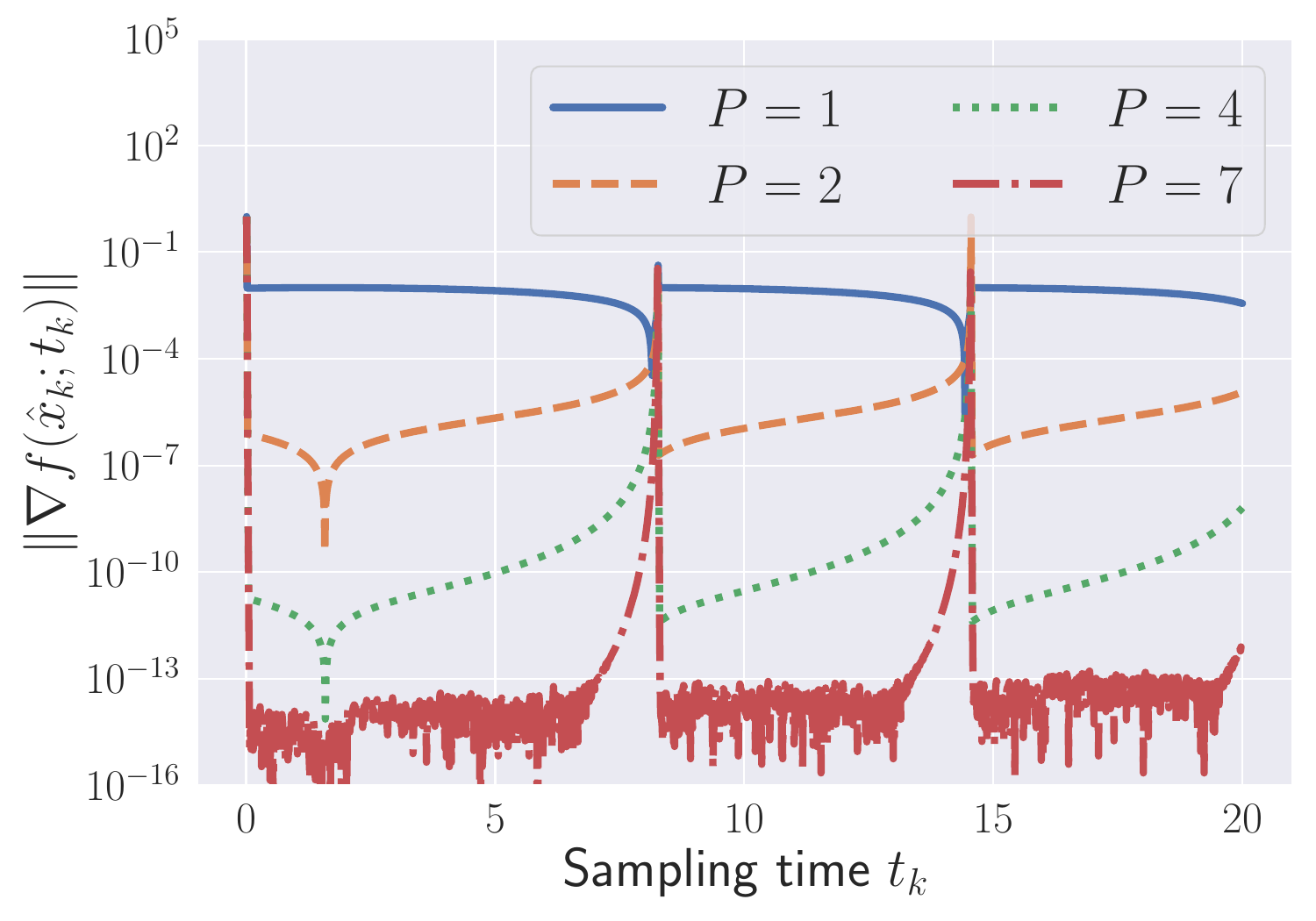}
    \subcaption{$h=0.01$}\label{fig:experiment11_3}
  \end{minipage}
  \begin{minipage}[b]{0.49\hsize}
    \centering
    \includegraphics[width=\textwidth]{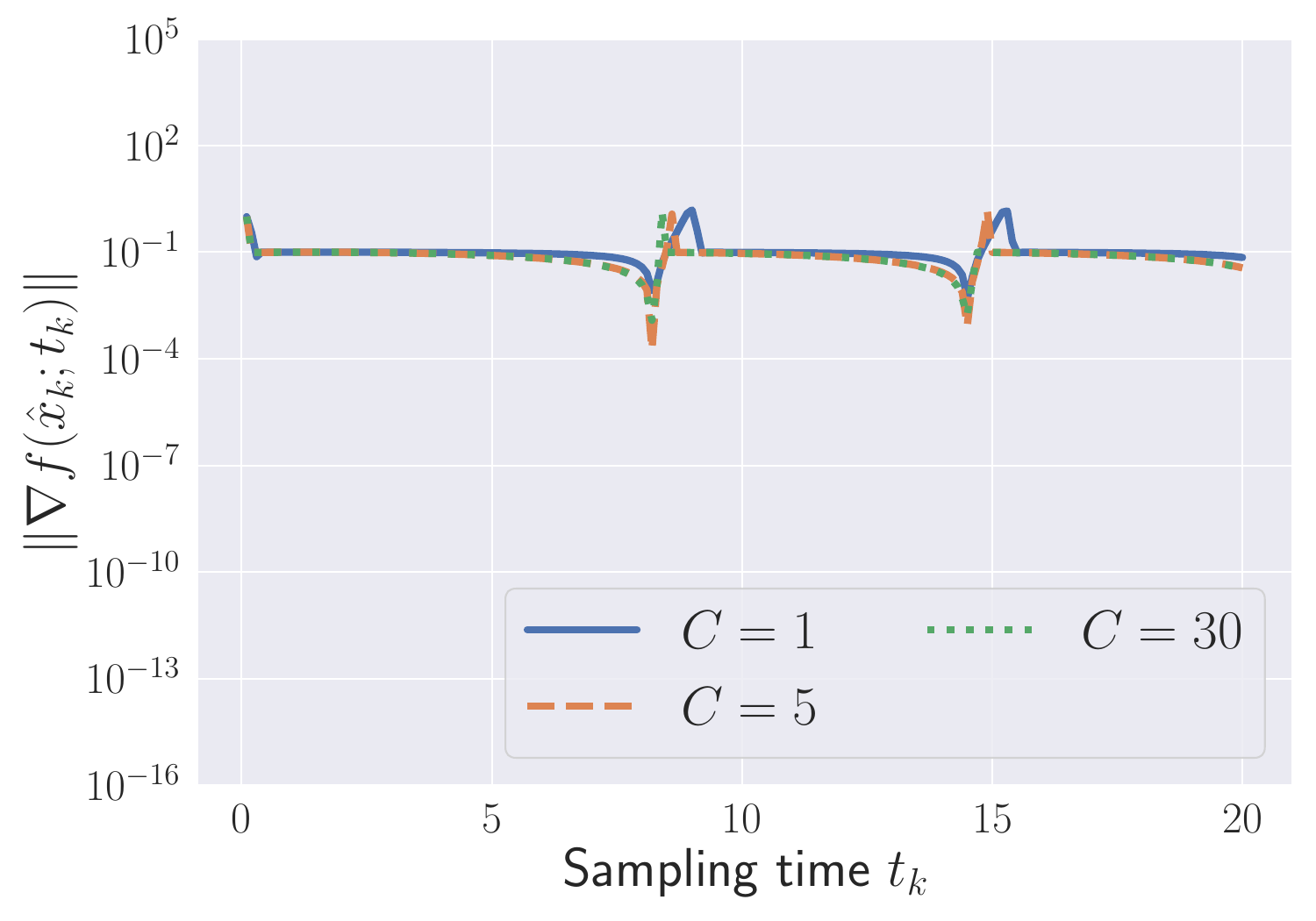}
    \subcaption{$v=0.1$}\label{fig:experiment12_1}
  \end{minipage}\\
  \begin{minipage}[b]{0.49\hsize}
    \centering
    \includegraphics[width=\textwidth]{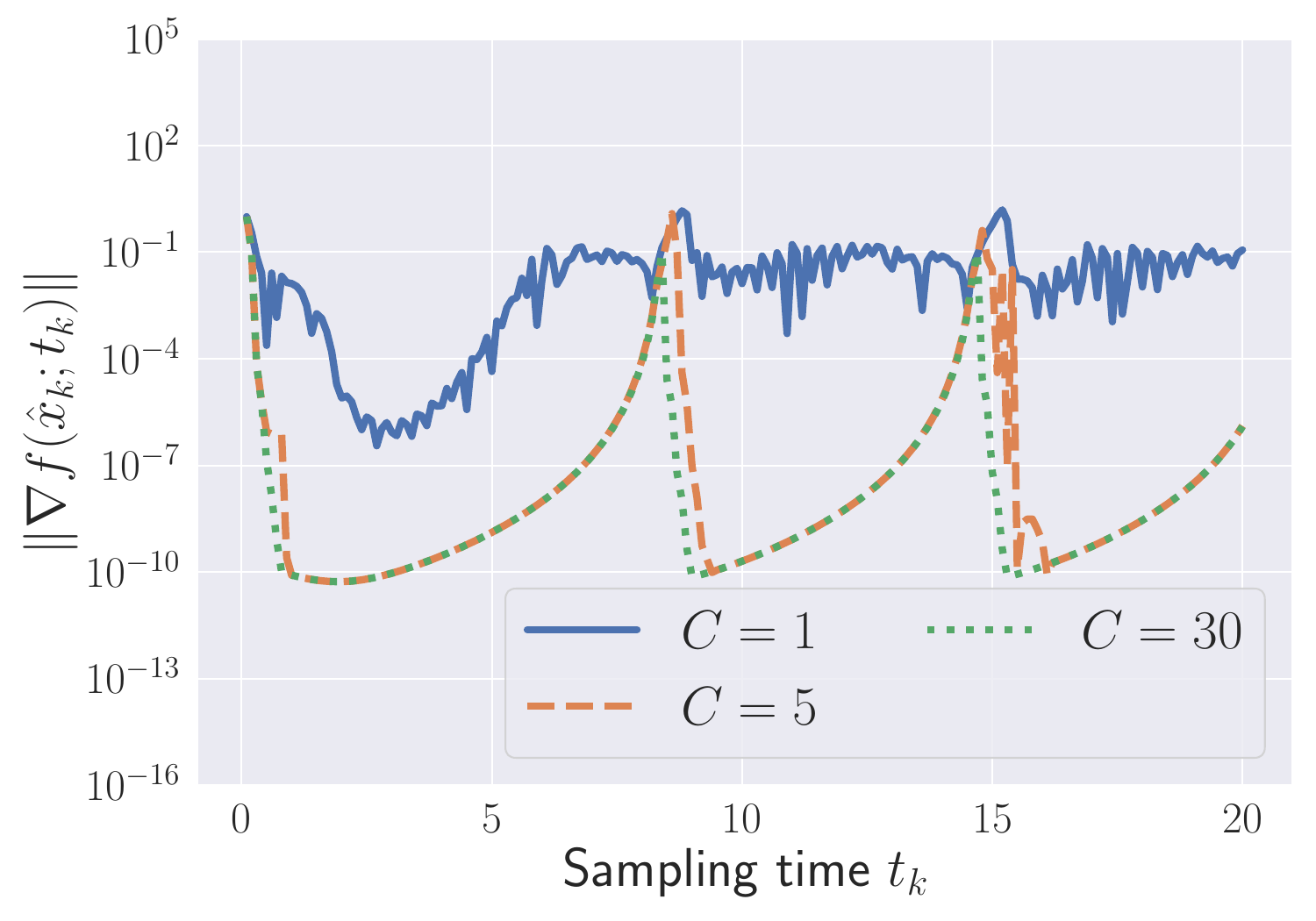}
    \subcaption{$v=1$}\label{fig:experiment12_2}
  \end{minipage}
  \begin{minipage}[b]{0.49\hsize}
    \centering
    \includegraphics[width=\textwidth]{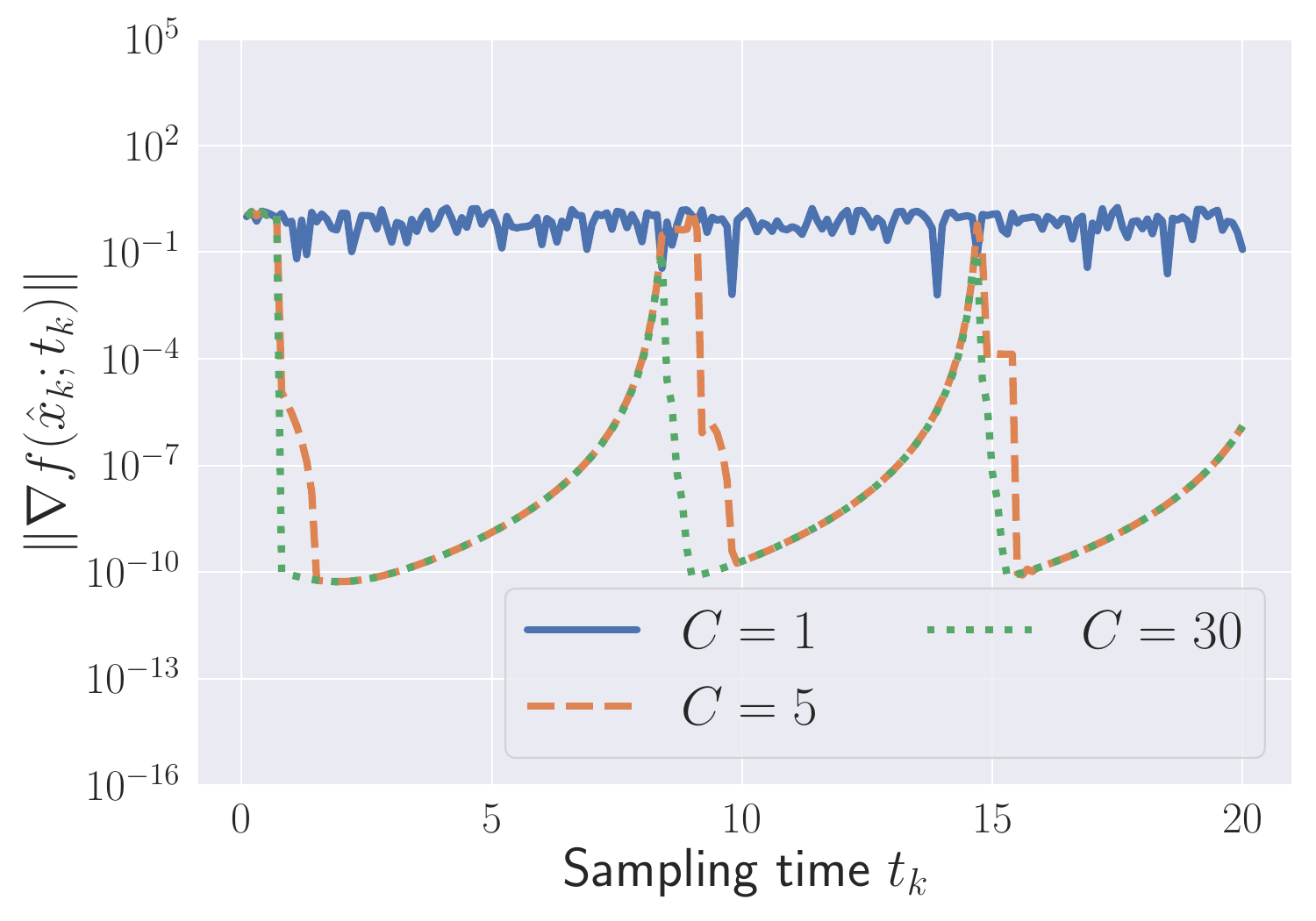}
    \subcaption{$v=20$}\label{fig:experiment12_3}
  \end{minipage}
  \caption{Results of Experiment 1}\label{fig:experiment1}
\end{figure}


The results of (i) are shown in \cref{fig:experiment11_1,fig:experiment11_2,fig:experiment11_3}.
We measured the tracking error by $\|\nabla_{\bm x}f(\hat{\bm x}_k;t_k)\|$ instead of $\|\hat{\bm x}_k-\bm x^\ast(t_k)\|$ because the latter one is difficult to compute.
\cref{fig:experiment11_1} ($h=1$) shows no significant difference in the tracking error with different $P$.
This is because $\sigma_{p}h^{p}$ in \cref{ineq:error_general} does not decrease as $p$ increases.
\cref{fig:experiment11_2} ($h=0.1$) shows that the error decreases as $P$ increases, as presented in \cref{ineq:error_general} with $p=P$.
Another noteworthy aspect is the behavior around $t=8$.
At this time, the target trajectory disappears, and the error increases. Still, the proposed method did not diverge and succeeded in tracking a new target trajectory thanks to the acceptance condition~\cref{eq:acceptance}.
\cref{fig:experiment11_3} ($h=0.01$) shows that the error gets much smaller than the case $h=0.1$.

The results of (ii) are shown in \cref{fig:experiment12_1,fig:experiment12_2,fig:experiment12_3}.
\cref{fig:experiment12_1} ($v=0.1$) shows that the tracking error was not improved by increasing $C$.
This is because the threshold $v$ is too small.
\cref{fig:experiment12_2} ($v=1$) shows that the error was lower for $C=5$ and $C=30$ than for $C=1$.
Notably, the observed values in the case of $(C,v)=(5,1)$ are smaller than the theoretically expected values $(C,v)=(27,16.4)$.
This result suggests that the conditions~\cref{ineq:condition2,ineq:condition4} in the theoretical analysis are conservative in this setting.
\cref{fig:experiment12_3} ($v=20$) shows that the error was similar to the case $v=1$ in $t\geq8$.
This result suggests that small $C$ cannot improve the error even when $v$ is large.
Another noteworthy observation is that the error in the case $(C,v)=(1,20)$ is larger than those in the cases $(C,v)=(1,0.1)$ and $(C,v)=(1,1)$.
This is consistent with the theoretical result in \cref{thm:error_nc} that the upper bound of the error gets worse as $v$ increases when the assumptions for the $O(h^{p})$ error are not satisfied.

\subsection{Experiment 2: Target Tracking}
We consider the following two-dimensional strongly convex problem:
\[
    \min_{\bm x\in\R^2}f(\bm x;t)\coloneqq\|\bm x-\bm y(t)\|^2,
\]
where $\bm y(t)=(10\sin0.5t,23\cos0.3t)$ is a target trajectory.
This is the same setting as \citep[Section 5]{lin2019simplified}.
We compared the proposed method with Gradient Trajectory Tracking (GTT)~\citep{simonetto16}, a state-of-the-art algorithm for strongly convex problems.
We fixed the sampling period to $h=0.1$ and the initial point to $\bm x_0=(0,0)$.
The constants $\mu$, $L_{2,0}$, and $L_{1,1}$ in \cref{thm:sc} can be computed as $\mu=2$, $L_{2,0}=2$, and $L_{1,1}<17.04$.
We set $\alpha=1/2$ and we can derive $\theta_1=0$ and $\sigma_1\leq L_{1,1}/\mu=8.52$, where $\theta_1$ and $\sigma_1$ are the constants in \cref{thm:sc}.
To satisfy the first inequality in \cref{ineq:condition2} with $(\gamma,P)=(\theta_1^C,7)$, it suffices to take $C\geq1$.
The condition~\cref{ineq:condition4} with $(p,C,\gamma)=(7,1,\theta_1^C)$ is reduced to $v\geq\sigma_1$ when we neglect the terms $\gamma^{k-k_0-p+1}$ and $\sigma_{p}h^p$ as sufficiently small.
For these reasons, we set $(C,v)=(1,10)$.

\cref{fig:experiment2} shows the results.
The proposed method outperformed GTT in terms of the tracking error.
It is also worth noting that our algorithm achieved high accuracy in tracking the target trajectory after a certain number of rounds, as stated in \cref{thm:error_general}.

\begin{figure}[tb]
  \begin{minipage}[b]{0.49\hsize}
    \centering
    \includegraphics[width=\textwidth]{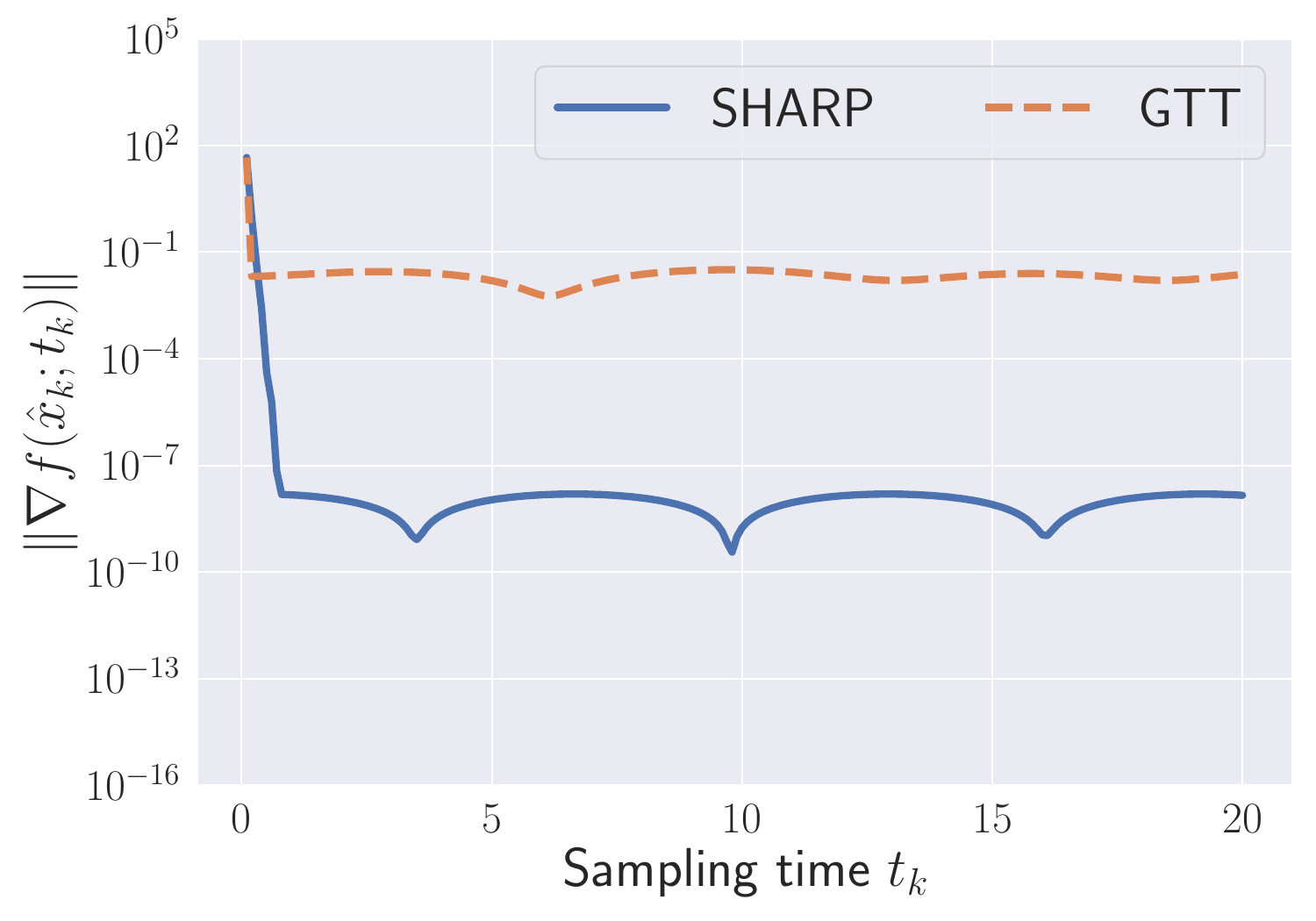}
    \caption{Results of Experiment 2}\label{fig:experiment2}
  \end{minipage}
  \begin{minipage}[b]{0.49\hsize}
    \centering
    \includegraphics[width=\textwidth]{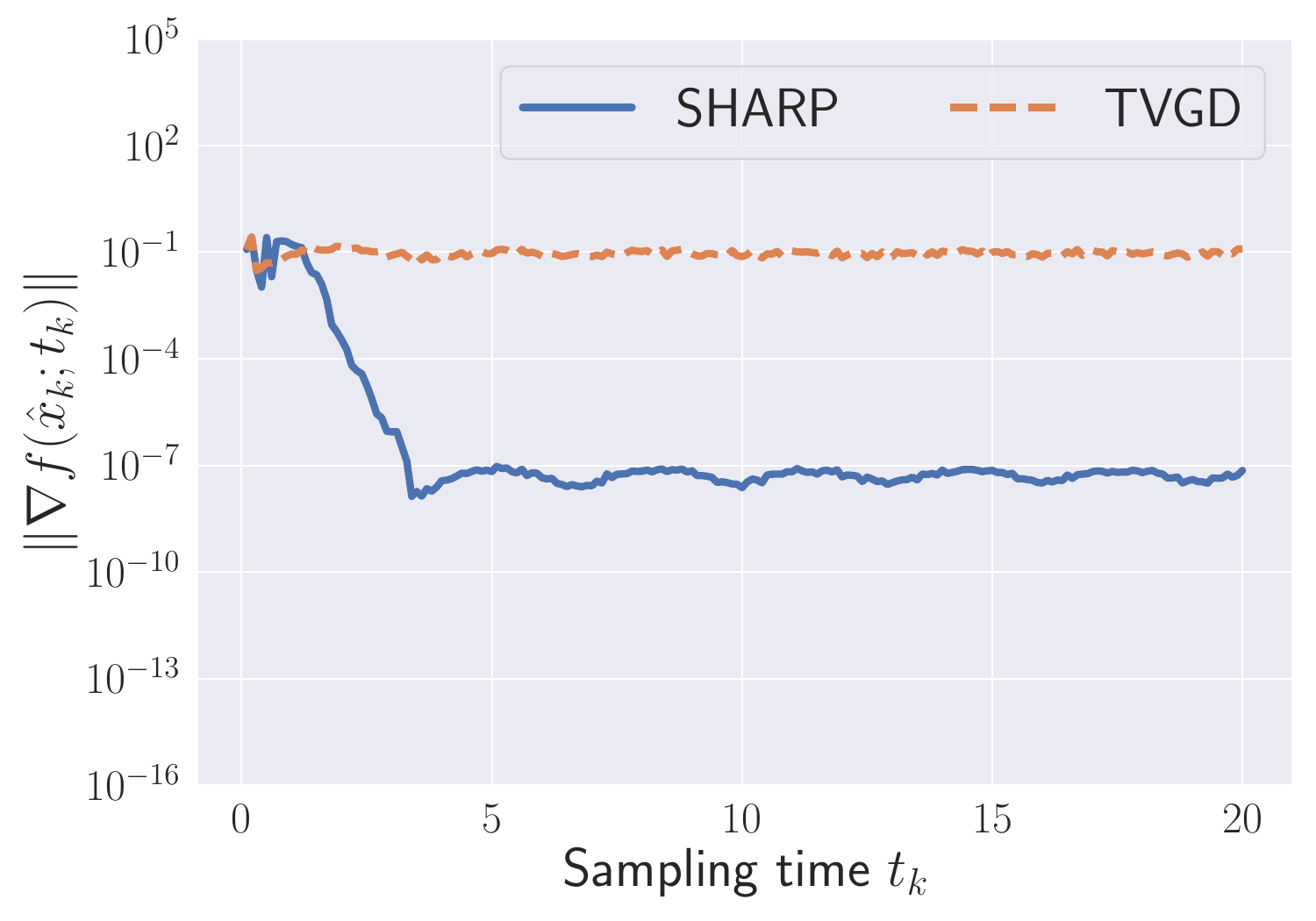}
    \caption{Results of Experiment 3}\label{fig:experiment3}
  \end{minipage}
\end{figure}

\subsection{Experiment 3: Non-convex Robust Regression}
We consider the following non-convex problem:
\[
    \min_{\bm x\in\R^n}f(\bm x;t)\coloneqq\frac1m\sum_{i=1}^m\ell\prn*{\bm a_i(t)^\top\bm x-b_i(t)},
\]
where $\ell(\cdot)$ is the Geman--McClure loss function~\citep{geman1985bayesian} defined by $\ell(z)\coloneqq z^2/(1+z^2)$.
For each $t=t_k~(k=0,1,\ldots)$, we sampled $\bm a_1(t),\ldots,\bm a_m(t)\in\R^n$ from the uniform distribution on $[-1,1]^n$ independently and computed $b_i(t)$ by $b_i(t)=\bm a_i(t)^\top\bm x^\ast(t)$, where $\bm x^\ast(t)=(\cos (t/n),\cos(2t/n),\ldots,\cos t)$ is a smoothly changing true trajectory.
Since the objective function is non-convex in $\bm x$ and non-smooth in $t$, using $(\nabla_{\bm x\bm x}f(\bm x;t))^{-1}$ or $\nabla_{\bm xt}f(\bm x;t)$ is unstable in this setting.
We compare the proposed method with Time-Varying Gradient Descent (TVGD) because TVGD has a theoretical guarantee without the assumption that the initial point is sufficiently close to the initial optimum, even in non-convex optimization problems, as shown in \citep[Theorem 3.1]{iwakiri2024prediction} and \cref{thm:error_nc} in this paper.
We fixed the sampling period to $h=0.1$ and the initial point to $\bm x_0=\bm0$.
We set the problem's parameters as $(n,m)=(10,100)$ and the algorithm's parameters as $(P,C,v)=(7,30,10)$.
Since $f(\cdot;t)$ is $2$-smooth, we set $\alpha=1/2$.

\cref{fig:experiment3} implies that the proposed method outperformed TVGD in terms of the tracking error.
This result also suggests that our algorithm can perform well even when $f(\bm x;t)$ is non-convex in $\bm x$ and non-smooth in $t$ as long as the target trajectory is sufficiently smooth.
This is consistent with the theoretical result in \cref{thm:error_general}, which only assumes the smoothness of the target trajectory (\cref{ass:trajectory_local2}) and local linear convergence of the correction step (\cref{ass:correction_local}).


\section{Conclusion}

We proposed a prediciton-correction algorithm, SHARP, which tracks a target trajectory of time-varying optimization problems.
Its prediction scheme is based on the Lagrange interpolation of past solutions and an acceptance condition, which can be computed without Hessian matrices or even gradients.
We proved that the proposed method achieves an $O(h^{p})$ tracking error, assuming that the target trajectory has a bounded $p$th derivative and the correction step converges locally linearly.
We also proved that the proposed method can track a trajectory of stationary points even if the objective function is non-convex.
Numerical experiments demonstrated that our algorithm tracks a target trajectory with high accuracy.

In the future, we will investigate how the proposed method can be generalized to more complex problems, such as constrained, non-smooth, or stochastic problems.
Another possible direction is to weaken the assumptions of the tracking error analysis.

\section*{Acknowledgments}
This work was partially supported by JSPS KAKENHI (23H03351 and 24K23853) and JST CREST (JPMJCR24Q2).

\bibliographystyle{abbrvnat}
\bibliography{ref}

\appendix

\section{Proof of \cref{lem:velocity1}}\label{sec:proof_velocity1}

\begin{proof}
    First, we prove
    \begin{gather}
        \int_{[0,1]^p}\bm \varphi^{(p)}\left(\sum_{i=1}^ps_i\right)\dd \bm s=\sum_{i=0}^p(-1)^{p-i}\binom pi\bm \varphi(i)\label{eq:integral}
    \end{gather}
    for any $p$-times differentiable function $\bm \varphi\colon\R\to\R^n$ by induction.
    For $p=1$, the equation~\cref{eq:integral} is equivalent to $\int_0^1\bm \varphi'(s_1)\dd s_1=\bm \varphi(1)-\bm \varphi(0)$,
    which holds from the fundamental theorem of calculus.
    Assume \cref{eq:integral} holds for some $p$.
    Then, we have
    \begin{align}
        \int_{[0,1]^{p+1}}\bm \varphi^{(p+1)}\left(\sum_{i=1}^{p+1}s_i\right)\dd \bm s
        &=\int_{[0,1]^{p}}\bm \varphi^{(p)}\left(1+\sum_{i=1}^{p}s_i\right)\dd \bm s-\int_{[0,1]^{p}}\bm \varphi^{(p)}\left(\sum_{i=1}^{p}s_i\right)\dd \bm s\\
        &=\sum_{i=0}^{p}(-1)^{p-i}\binom {p}i\bm \varphi(1+i)-\sum_{i=0}^{p}(-1)^{p-i}\binom {p}i\bm \varphi(i)\\
        &=\sum_{i=1}^{p+1}(-1)^{p-i+1}\binom {p}{i-1}\bm \varphi(i)+\sum_{i=0}^{p}(-1)^{p-i+1}\binom {p}i\bm \varphi(i)\\
        &=\sum_{i=0}^{p+1}(-1)^{p-i+1}\binom {p+1}{i}\bm \varphi(i),
    \end{align}
    where the first equality is the integration by $s_{p+1}$, and the second equality follows from the induction hypothesis.
    Thus, \cref{eq:integral} holds for any $p\geq1$.

    \cref{eq:integral} with $\bm \varphi(s)=\bm x^\ast((k-s)h)$ yields
    \begin{align}
        \left\|\bm x^\ast(t_k)-\sum_{i=1}^{p}(-1)^{i-1}\binom {p}{i}\bm x^\ast(t_{k-i})\right\|
        &=\left\|\sum_{i=0}^{p}(-1)^{m-i}\binom {p}{i}\bm x^\ast(t_{k-i})\right\|\\
        &=\left\|\int_{[0,1]^{p}}(\bm x^\ast)^{(p)}\left(\prn[\bigg]{k-\sum_{i=1}^{p}s_i}h\right)h^{p}\dd \bm s\right\|\nonumber\\
        &\leq h^{p}\sup_{t\in[t_{\underline k},t_{\overline k}]}\big\|(\bm x^\ast)^{(p)}(t)\big\|,\label{ineq:proof_velocity2_2}
    \end{align}
    which completes the proof.
\end{proof}


    

\section{Proof of \cref{thm:error_general}}\label{sec:proof_recursive0}

\begin{proof}
    \cref{lem:recursive_general2} guarantees \cref{ineq:recursion} for all $k_1+P\leq k\leq\overline k$.

    Let 
    \begin{gather}
        a_k\coloneqq
        \begin{dcases*}
            e_k-\frac{2^{P-p}\sigma_p}{1-(2^P-1)\gamma}h^p&if $k_1\leq k\leq k_1+P-1$,\\
            \gamma\sum_{i=1}^{P}\binom {P}{i}a_{k-i}&if $k_1+P\leq k\leq \overline k$.
        \end{dcases*}\label{eq:sequence_definition}
    \end{gather}
    We prove 
    \begin{gather}
    e_k\leq a_k+\frac{2^{P-p}\sigma_p}{1-(2^P-1)\gamma}h^p\label{ineq:sequence_induction}
    \end{gather}
    for all $k_1\leq k\leq\overline k$ by induction.
    For $k_1\leq k\leq k_1+P-1$, the inequality~\cref{ineq:sequence_induction} holds with equality from \cref{eq:sequence_definition}.
    Assume that there exists $k_2\in\{k_1+P-1,\ldots,\overline k-1\}$ such that \cref{ineq:sequence_induction} holds for all $k_1\leq k\leq k_2$.
    Then, we have
    \begin{align}
        e_{k_2+1}
        &\leq\gamma\sum_{i=1}^{P}\binom {P}{i}e_{k_2+1-i}+2^{P-p}\sigma_{p}h^{p}\\
        &\leq\gamma\sum_{i=1}^{P}\binom {P}{i}\left(a_{k_2+1-i}+\frac{2^{P-p}\sigma_p}{1-(2^P-1)\gamma}h^p\right)+2^{P-p}\sigma_{p}h^{p}\\
        &=a_{k_2+1}+\frac{2^{P-p}\sigma_p}{1-(2^P-1)\gamma}h^p,
    \end{align}
    where the first inequality follows from \cref{ineq:recursion} and the second inequality from the induction hypothesis.
    Note that the last equality is obtained from \cref{eq:sequence_definition}.
    This implies that \cref{ineq:sequence_induction} holds for $k=k_2+1$.
    Therefore, \cref{ineq:sequence_induction} holds for all $k_1\leq k\leq\overline k$.

    Next, we find an upper bound on $a_k$.
    The roots of the polynomial 
    \[
        z^{P}-\gamma\sum_{i=1}^{P}\binom{P}{i}z^{P-i}
        =(1+\gamma)z^{P}-\gamma(z+1)^{P}
    \]
    are 
    \[
        z=\prn*{(1+\gamma^{-1})^{1/P}\exp\left(\frac{2\pi i\sqrt{-1}}{P}\right)-1}^{-1}\quad\text{for}\quad i=1,2,\ldots,P,
    \]
    which are distinct from each other.
    The root with the largest absolute value is $((1+\gamma^{-1})^{1/P}-1)^{-1}$.
    Hence, there exists a constant $M\geq0$ which is independent of $\overline k$ such that the following holds for all $k_1\leq k\leq\overline k$:
    \begin{gather}
        a_k\leq M\prn*{(1+\gamma^{-1})^{1/P}-1}^{-k}.\label{ineq:proof_general11}
    \end{gather}
    By combining with \cref{ineq:sequence_induction}, we get \cref{ineq:error_general0} for all $k_1\leq k\leq\overline k$.

    Since the first inequality of \cref{ineq:condition2} guarantees $((1+\gamma^{-1})^{1/P}-1)^{-1}<1$, the asymptotic tracking error is directly obtained by taking the limit $k\to\infty$.
\end{proof}

\section{Triangle Inequalities between Point and Set}\label{sec:proof_triangle}

The following triangle inequalities hold for the distance between points and sets, which are used in the proof of \cref{lem:recursive2}.

\begin{lemma}\label{prop:triangle}
    For any $\bm x,\bm y\in\R^n$ and $\mathcal X,\mathcal Y\subset\R^n$, the following inequalities hold:
    \begin{align}
        \dist(\bm x,\mathcal Y)&\leq\|\bm x-\bm y\|+\dist(\bm y,\mathcal Y),\label{ineq:triangle1}\\
        \dist(\bm x,\mathcal Y)&\leq\dist(\bm x,\mathcal X)+\disth(\mathcal X,\mathcal Y).\label{ineq:triangle2}
    \end{align}
\end{lemma}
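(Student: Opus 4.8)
The plan is to prove the two inequalities separately, each by reducing to the ordinary triangle inequality for the Euclidean norm and then taking infima. For \cref{ineq:triangle1}, I would fix an arbitrary $\bm z\in\mathcal Y$ and apply the triangle inequality $\|\bm x-\bm z\|\leq\|\bm x-\bm y\|+\|\bm y-\bm z\|$. Taking the infimum over $\bm z\in\mathcal Y$ on both sides, the left-hand side becomes $\dist(\bm x,\mathcal Y)$, and the right-hand side becomes $\|\bm x-\bm y\|+\dist(\bm y,\mathcal Y)$ since $\|\bm x-\bm y\|$ is constant in $\bm z$. One should note the edge case $\mathcal Y=\emptyset$: then both sides are $+\infty$ by the convention $\inf\emptyset=+\infty$, so the inequality still holds (or one may simply assume $\mathcal Y\neq\emptyset$, as is the case in the application).

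For \cref{ineq:triangle2}, I would combine \cref{ineq:triangle1} with the definition of the Hausdorff distance. Fix an arbitrary $\bm y\in\mathcal X$. By \cref{ineq:triangle1}, $\dist(\bm x,\mathcal Y)\leq\|\bm x-\bm y\|+\dist(\bm y,\mathcal Y)$. Now $\dist(\bm y,\mathcal Y)\leq\sup_{\bm y'\in\mathcal X}\dist(\bm y',\mathcal Y)\leq\disth(\mathcal X,\mathcal Y)$ because $\bm y\in\mathcal X$, so $\dist(\bm x,\mathcal Y)\leq\|\bm x-\bm y\|+\disth(\mathcal X,\mathcal Y)$. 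Taking the infimum over $\bm y\in\mathcal X$ yields $\dist(\bm x,\mathcal Y)\leq\dist(\bm x,\mathcal X)+\disth(\mathcal X,\mathcal Y)$, as desired. Again, if $\mathcal X=\emptyset$ the right-hand side is $+\infty$ and the claim is trivial.

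This is an elementary argument and there is no real obstacle; the only point requiring mild care is the handling of infima over possibly empty sets and the interchange of "fix a point, then take infimum" — one must make sure the quantity held fixed (here $\|\bm x-\bm y\|$ or $\disth(\mathcal X,\mathcal Y)$) does not depend on the variable over which the infimum is subsequently taken, which is the case in both steps. I would present the whole thing in three or four short lines.
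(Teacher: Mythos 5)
Your proposal is correct and follows essentially the same route as the paper: the first inequality via the pointwise triangle inequality followed by an infimum over points of $\mathcal Y$, and the second by restricting $\bm y$ to $\mathcal X$, bounding $\dist(\bm y,\mathcal Y)$ by $\disth(\mathcal X,\mathcal Y)$, and taking the infimum over $\bm y\in\mathcal X$. The extra remark about empty sets is a harmless addition not needed in the paper's application.
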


\begin{proof}
    The inequality~\cref{ineq:triangle1} can be proved by the triangle inequality:
    \[
        \dist(\bm x,\mathcal Y)
        =\inf_{\bm y'\in \mathcal Y}\|\bm x-\bm y'\|
        \leq\|\bm x-\bm y\|+\inf_{\bm y'\in \mathcal Y}\|\bm y-\bm y'\|
        =\|\bm x-\bm y\|+\dist(\bm y,\mathcal Y).
    \]

    The inequality~\cref{ineq:triangle1} holds even when restricting $\bm y$ to $\mathcal X$.
    In this case, the second term on the right-hand side of \cref{ineq:triangle1} can be upper bounded by $\disth(\mathcal X,\mathcal Y)$ by the definition of the Hausdorff distance.
    Hence, we have
    \[
        \dist(\bm x,\mathcal Y)
        \leq \|\bm x-\bm y\|+\disth(\mathcal X,\mathcal Y),
    \]
    for all $\bm y\in\mathcal X$.
    Taking the infimum with regard to $\bm y\in\mathcal X$ yields \cref{ineq:triangle2}.
\end{proof}

\end{document}